\numberwithin{equation}{section}
\theoremstyle{plain}
\newtheorem{Pro}{Proposition}[section]
\newtheorem{Cor}{Corollary}[section]
\newtheorem{The}{Theorem}[section]
\newtheorem{Lem}{Lemma}[section]
\theoremstyle{definition}
  \newtheorem{scRem}{Remark}[section]
  \newtheorem{scDef}{Definition}[section]
  \newtheorem{Exa}{Example}[section]     
  \newtheorem{nnExa}{Example}[section]
\newtheorem{proof}{Proof}
\newcommand{\qed}{\hfill$\square$}
   \newcommand{\Sec}{{\S}} 
   \renewcommand{\(}{\textup(}
   \renewcommand{\)}{\textup)}
\newcommand{\bbZ}{{\mathbb Z}}
\newcommand{\bbR}{{\mathbb R}}
\newcommand{\bbQ}{{\mathbb Q}}
\newcommand{\bbC}{{\mathbb C}}
\newcommand{\bbN}{{\mathbb N}}
\newcommand{\bbS}{{\mathbb S}}
   \newcommand{\bA}{\mathbf A}
   \newcommand{\bB}{\mathbf B}
\newcommand \cM {{\mathcal M}}
   \DeclareMathOperator {\pr} {pr}
\def \const {{\rm const}}
\def \Tr {{\rm Tr}\,}
\def \Ker {{\rm Ker}\,}
\def \Span {{\rm span}}       
\def \grav {{g}}              
\def \m {{\mu}}               
\def \Kin {{G}}               
\def \pnt {{x}}               
\renewcommand \o {^{\circ}}   
\def \somega {{\varpi}}       
\def \fomega {{\somega}}      
\def \mod {\,{\rm mod}\,}
\def \bP {{\bf P}}            
\def \V {{\bf V}}             
\def \M {{\bf r}}             
\def \C {{\bf c}}             
\def \x {{\bf x}}             
\def \y {{\bf y}}             
\def \p {{\bf p}}             
\def \q {{\bf q}}             
\def \I {{\bf I}}             
\def \a {{\bf a}}             
\def \b {{\bf b}}             
\def \u {{\bf u}}             
\def \v {{\bf v}}             
\def \J {{\tau}}              
\newcommand \xxi {\boldsymbol{\xi}}         
\newcommand \eeta {\boldsymbol{\eta}}       
\newcommand \oomega {{\omega}}                
\newcommand \vvarphi {\boldsymbol{\varphi}} 
\newcommand \ddelta {\boldsymbol{\delta}}   
\newcommand \mxxi {{\bf X}}        
\newcommand \meeta {{\bf X}}       
\newcommand \mx {{\bf Y}}          
\newcommand \my {{\bf Y}}          
\newcommand \uxxi {{\x}}           
\newcommand \ueeta {{\x}}          
\newcommand \ux {{\y}}             
\newcommand \uy {{\y}}             
\newcommand \pxxi {\xxi}           
\newcommand \peeta {\xxi}          
\newcommand \px {\eeta}            
\newcommand \py {\eeta}            
\newcommand \heeta {\u}            
\newcommand \hx {{\bf V}}          
\newcommand \hy {\v}               
\newcommand \hyi {v}               
\begin{document}

\title{Relatively-periodic solutions of planetary systems with satellites and 
systems with slow and fast variables}
\author{E.\,A.~Kudryavtseva}
\address{Moscow State University}
\email{eakudr@mech.math.msu.su}
\date{}
\udk{}

\maketitle

\begin{fulltext}

\begin{abstract}
The partial case of the planar $N+1$ body problem, $N\ge2$, of the
type of planetary system with satellites is studied. One of the
bodies (the Sun) is assumed to be much heavier than the other bodies
(``planets'' and ``satellites''), moreover the planets are much
heavier than the satellites, and the ``years'' are much longer than
the ``months''. 
The existence of at least $2^{N-2}$ smooth 2-parameter
families of symmetric periodic solutions in a rotating coordinate system 
is proved, such that the distances between each planet and its
satellites are much shorter than the distances between the Sun and
the planets. The existence of ``gaps'' in these families of solutions is proved, 
corresponding to $k:(k+1)$ resonances of angular frequencies of planets' revolution around the Sun.
Generating symmetric periodic solutions are described.
Sufficient conditions for some periodic solutions to be orbitally stable in
linear approximation are given. 
The results are extended to a class of Hamiltonian systems with slow and fast
variables close to the systems of semidirect product type.

\medskip
{\bf Key words:} $n$-body problem, periodic solutions, orbital
stability, averaging, slow and fast variables.

\medskip
{\bf MSC:} 70F10, 70F15, 70K20, 70K42, 70K43, 70K65, 70K70, 70H09, 70H12, 70H14, 37J15, 37J20, 37J25, 37J45, 37G15, 37G40.
\end{abstract}


\markright{Periodic solutions of planetary systems with satellites}

\footnotetext[0]{The work was done in the Lomonosov Moscow State University and supported by 
Russian Science Foundation (project 17-11-01303).}

\hfill{In memory of Nikolai Nikolaevich Nekhoroshev}

\section{Introduction} \label{sec:vved}

We study the partial case of the planar $N+1$ body problem, $N\ge2$,
that can be characterized as ``the problem on the motion of a
planetary system with satellites''.

An effective estimate for the number of smooth two-parameter families
of symmetric periodic solutions of this problem in a rotating
coordinate system is proved (theorems \ref {th1}, \ref {th:ust}(A)
and corollary \ref {cor:3bodies}($\exists$) about ``solutions of the
first kind''). Sufficient conditions for orbital stability in linear
approximation for some of these solutions are given (theorem \ref
{th:ust}(B)). Generating symmetric periodic solutions are described
(theorem \ref {th1}). The necessity of a nondegeneracy condition is
proved (theorem \ref {th:degen:sym} and corollary \ref
{cor:3bodies}($\nexists$)). The periodic solutions under our
investigation are close to collections of independent ``circular''
solutions of the corresponding Kepler problems for each planet and
each satellite. 
Via properties of periodic solutions of the Hill problem in the Lunar theory (Lemma \ref {lem:mult} and Theorem \ref {th:mult}), which were proved in \cite {K:vest13} by means of the averaging method on a submanifold,
the listed results are generalized to a wide class of Hamiltonian systems with slow and fast variables
(theorems \ref {th1'}--\ref {th:nondeg}).

Theorems \ref {th1} and \ref {th:ust} of the present work include as
partial cases the result of F.R.\ Moulton \cite {32} (that in turn generalizes results of G.\ Hill \cite {H,W} and E.W.\ Brown \cite {brown1892} on families of periodic solutions of the Hill problem and the restricted three-body problem, respectively, cf.\ \cite[\S 17--19]{33}) and a result by H.\ Poincar\'e \cite {1} on the existence of periodic solutions and sufficient
conditions of their orbital stability in linear approximation for the
systems of the Sun--Earth--Moon and the Sun--two planets types
(respectively). Theorem \ref {th1} implies the known results by
G.\,A.\ Krassinsky \cite {11} and E.\,A.\ Kudryavtseva \cite
{28,29} on the number of periodic solutions of planetary systems
without and with satellites (respectively), by V.\,N.\ Tkhai \cite
{Thay} on the number and the location of symmetric periodic solutions
of the systems of the Sun--planets and Sun--planet--satellites types.

To be precise, the motions of planetary systems with satellites discovered in this paper are indeed relatively periodic (Definition \ref {def:per}) rather than periodic. This paper does not study orbital stability of these solutions, but it studies a weaker property of them, namely orbital stability in linear approximation (\S\ref{par:ust}).

Our paper studies neither motions of such planetary systems as, e.g., Sun--Jupiter--asteroid, nor motions of such planetary systems with satellites as Sun--Saturn--Mimas--a particle of Saturn's ring (recall, Mimas is Saturn's satellite). The fact is that, in the former system, the asteroid plays the role of a ``small planet'' whose mass is much smaller than the mass of the ``main planet'', the Jupiter, while in the latter system, the particle of Saturn's ring plays the role of its ``small satellite'' whose mass is much smaller than the mass of the ``main satelite'', Mimas. In contrast to these situations, our paper assumes (in order to reduce the number of small parameters) that the masses of all planets have the same order $\mu$ and masses of all satellites have the same order $\mu\nu$ (cf.\ (\ref {A})). That is, our paper does not consider systems with ``small planets'' (asteroids) or ``small satellites'' (like ring's particles). Thus, results of our work are not applicable for explaning such phenomena in the Sun systen as Kirkwood ``hatches'' in the asteroid bell or ``gaps'' in Saturn's ring. Nevertheless, we justify the presence of similar ``gaps'' corresponding to resonances $(k+1):k$ in our planetary systems.

Finally we remark that the question of interpretation of the discovered class of relatively periodic solutions of the $N+1$ body problem in terms of behaviour of planets and satellites of the real solar system is very interesting, needs an additional investigation and is not discussed in this work.

The paper has the following structure. In \Sec\ref {sec:vved'}, the statement of the problem is described, main results are formulated (Theorems \ref {th1}--\ref {th:degen:sym} and Corollary \ref {cor:3bodies}) and the method of constructing periodic solutions is described (Theorems \ref {th1} and \ref {th:ust}). In \Sec\ref {sec:quick:slow}, Theorems \ref {th1'}--\ref {th:nondeg} on periodic solutions of Hamiltonian systems with slow and fast variables are formulated.
In \S \ref {subsec:ideas}, we proof Theorems \ref {th1'} and \ref {th:sym} and describe a sketch of deriving Theorems \ref {th:stab} and \ref {th:nondeg} from \cite {K:vest13} (cf.\ Lemma \ref {lem:mult})
via the averaging method on a submanifold (cf.\ Lemma \ref {lem:mult} and Theorem \ref {th:mult})
and the method of generating function (Definition \ref {def:gen:func}). 
In \Sec\ref {par3:1}, the notions of a relatively periodic solution, a symmetric solution and a solution orbitally stable in linear approximation (Definition \ref {def5}), as well as the nondegeneracy conditions are discussed. In \Sec\ref {par3:1:5}, we construct a family of relatively periodic solutions of the unperturbed system, close to ``generating'' solutions. Along the way, we introduce coordinates in the phase space of the model (respectively, unperturbed or perturbed) problem, that bring the system to the form of the same name system in \Sec \ref {sec:quick:slow} (Lemma \ref {lem3:2:}), becides we rediscover two families of periodic solutions of the Hill problem (discovered by G.~Hill) via the method of averaging on a submanifold (\S \ref {subsec:Hill:0}). In \Sec \ref {sec:pert}, we start to derive Theorem \ref {th:ust}(A) from Theorem \ref {th1'}.
In \Sec\ref {par3:1:4}, we prove (in Lemma \ref {lem3:1}) that the transformation (\ref {eq:zamena}) brings the $N+1$ body problem to the perturbed system (\ref {eq:pert:planets}), and finish deriving Theorem \ref {th:ust}(A) from Theorem \ref {th1'}.
In \Sec \ref {par:proofs}, Theorems \ref {th1}, \ref {th:ust}(B), \ref {th:degen:sym} are derived from Theorems \ref {th:sym}, \ref {th:stab}, \ref {th:nondeg}.

The author expresses gratitude to N.\,N.\ Nekhoroshev for stating the problem, A.\,D.\ Bruno and Yu.\,M.\ Vorobiev for useful remarks that were quite helpful for making exposition clearer, to V.\,V.\ Kozlov, A.\,I.\ Neistadt, V.\,N.\ Tkhay, A.\,T.\ Fomenko, H.\ Zieschang and A.\,B.\ Kudryavtsev for their attention to the work.

\section{Statement of the problem, main results and methods of constructing solutions} \label{sec:vved'}

Let us formulate the results of the paper more precisely.

\subsection{Statement of the problem}

The planar $N+1$ body problem is described by the system of ODE's 
 \begin{equation} \label {O}
\frac{d\M_i}{dt}=\frac{\partial H(\M,\p)}{\partial\p_i}, \qquad 
\frac{d\p_i}{dt}=-\frac{\partial H(\M,\p)}{\partial\M_i}, \qquad \quad 0\le i\le N.
 \end{equation}
Here we denote
$\frac{\partial H(\M,\p)}{\partial\p_i}:=(\frac{\partial H}{\partial p_{i,1}},\frac{\partial H}{\partial p_{i,2}})$,
$\frac{\partial H(\M,\p)}{\partial\M_i}:=(\frac{\partial H}{\partial q_{i,1}},\frac{\partial H}{\partial q_{i,2}})$,
 \begin{equation} \label {pot}
H=H(\M,\p)=H(\M,\p;g,\m_0,\dots,\m_N) 
=\sum_{i=0}^N\frac{\p_i^2}{2\m_i}-\sum_{0\le i<j\le N}\frac{\grav\m_i\m_j}{r_{ij}}
=:\Kin+U
 \end{equation}
is the full energy of the system (equals the sum of kinetic and potential energies, $\Kin$ and $U$), $\M=(\M_0,\dots,\M_N)\in Q\subset(\bbR^2)^{N+1}$ are ``coordinates'', $\p=(\p_0,\dots,\p_N)\in(\bbR^2)^{N+1}$ are ``momenta'', 
$\M_i=(q_{i,1},q_{i,2})\in\bbR^2$ is the radius-vector of the $i$th body,
$\p_i=(p_{i,1},p_{i,2})\in\bbR^2$ is its momentum,
$r_{ij}=|\M_j-\M_i|$, $0\le i,j\le N$, are pairwise distances between bodies; $\grav>0$ is the gravitation constant, 
$\m_i>0$ is the mass of the $i$th body.
Here $Q\subset(\bbR^2)^{N+1}$ is the configuration manifold of the problem under consideration, consisting of all tuples of radius-vectors $\M_i\in\bbR^2$, $0\le i\le N$, such that $\M_i\ne\M_j$, $0\le i<j\le N$. The system (\ref {O}), (\ref {pot}) depends on $N+2$ parameters $g,\m_0,\dots,\m_N>0$ and it is a Hamiltonian system with $2N+2$ degrees of freedom, with the Hamilton function (\ref {pot}) and the symplectic structure 
 \begin{equation} \label {eq:oomega}
 \oomega=d\p\wedge d\M=\sum_{i=0}^Nd\p_i\wedge d\M_i:=\sum_{i=0}^N\sum_{k=1}^2
  dp_{i,k}\wedge dq_{i,k},
 \end{equation}
defined on an open subset $T^*Q=Q\times (\bbR^2)^{N+1}\subset \bbR^{4(N+1)}$ with coordinates $\M,\p$. 

In particular, the phase space of the problem has dimension $\dim(T^*Q)=4N+4$. 
Consider the submanifold $\widehat Q:=\{(\M_0,\dots,\M_N)\in Q\mid \sum_{i=0}^N \m_i\M_i=0\}$ in the configuration manifold $Q$. It consists of all configurations of $N+1$ particles in the Euclidean plane with masses $\m_0,\dots,\m_N$ and the center of masses at the origin. There exists a natural symplectomorphism between $T^*\widehat Q$ and $4N$-dimensional symplectic submanifold $M^{4N}\subset T^*Q$:
 \begin{equation} \label {eq:4N}
 M^{4N}:=\left\{(\M_0,\dots,\M_N,\p_0,\dots,\p_N)\in T^*Q \ \left| \
 \sum_{i=0}^N \m_i\M_i=\sum_{i=0}^N \p_i=0\right. \right\} .
 \end{equation}
The coordinates of the total momentum $\p_0+\dots+\p_N:T^*Q\to\bbR^2$ are first integrals of the system (\ref {O}). It is enough to study the restriction of the system (\ref {O}) to the $4N$-dimensional invariant submanifold $M^{4N}\approx T^*\widehat Q$.

\begin{scRem} \label {rem:relation}
Without loss of generality, we may choose the unities of mass,
distance and time as it will be suitable. In fact, for any constants
$a,b,c>0$, the power transformation
 $\widetilde\M_i=b^2c\M_i$, 
 $\widetilde\p_i=ab^{-1}c\p_i$, 
 $\widetilde t=a^{-1}b^3ct$, 
 $\widetilde H=a^2b^{-2}cH$, 
 $\widetilde\oomega=abc^2\oomega$,
together with the transformation of parameters
 $\widetilde g=a^2g$,
 $\widetilde\m_i=c\m_i$
brings the system 
(\ref {O}), (\ref {pot}) to the Hamiltonian system with the Hamilton function $\widetilde H=H(\widetilde\M,\widetilde\p;\widetilde g,\widetilde\m_0,\dots,\widetilde\m_N)$ and the symplectic structure $\widetilde\oomega=d\widetilde\p\wedge d\widetilde\M$.
In particular, we do not need to assume that the gravitational constant $\grav$ is arbitrary, but we may assume its value to be a distinguished number that we will choose below. (This can be achieved via scaling the time, as we described above.)
\end{scRem}

\begin{scDef} \label {def:per}
A solution $(\M(t),\p(t))$ of the planar $N+1$ body problem (\ref {O}), (\ref {pot}) will be called {\it relatively periodic} (or {\it periodic}, by abuse of language) if the locations of all bodies (and, hence, momenta too) after the time-interval $T>0$ can be obtained from their initial locations by rotating the plane by the same angle $\alpha$ around the centre of masses, for any initial time value, where
$-\pi<\alpha\le\pi$. The pair of real numbers $(T,\alpha)$ will be called {\it the relative period} of the solution, and the solution itself will be called {\it $(T,\alpha)$-periodic}.
 \end{scDef}

Any solution obtained from a $(T,\alpha)$-periodic solution via
shifting the time by a value $t$ and rotating by an angle $\varphi$
around the origin is a $(T,\alpha)$-periodic solution too. The union
of the phase trajectories of all such solutions is a {\it
two-dimensional torus} in the phase space, since it admits angular
coordinates $t\mod T$, $\varphi\mod2\pi$. All these solutions will be
regarded as a single {\it $(T,\alpha)$-periodic solution}, and the
union of their phase trajectories will be called {\it the phase
orbit} of this solution.

Many relatively periodic solutions of the planar $N+1$ body problem
happen to be {\it symmetric} (Theorem \ref {th1}). These solutions
are characterized by the following property: at some time, all the
bodies lie on the same line (i.e.\ a ``parade'' is observed) and
their velocities are perpendicular to this line.

In the present work, the following partial case of the planar $N+1$ body problem is considered, $N\ge2$. We assume that the mass of one
particle (the Sun) equals $\m_0=1$ and is much greater than the masses of
the other particles (the planets and satellites). Moreover the mass
$\mu_i$ of the $i$th planet and the mass $\mu_{ij}$ of its $j$th
satellite have the form
 \begin{equation} \label {A}
\m_i   =\mu m_i, \qquad \m_{ij}=\mu\nu m_{ij}\le\mu_i, \qquad
1\le i\le n, \quad 1\le j\le n_i,
 \end{equation}
where $0<\mu,\nu\ll 1$ are small parameters and $m_i,m_{ij}$ are
positive parameters far enough from zero (e.g.\ positive constants)
with the properties
 \begin{equation} \label {nov:**}
\sum_{i=1}^nm_i=1, \qquad
 \min_{i=1}^n \sum_{j=1}^{n_i}\frac{m_{ij}}{m_i}=1,
 \end{equation}
moreover $m_{ij}$ are bounded for $n_i\ge2$, where $n_i$ is the
number of satellites of the $i$th planet and
$1+n+\sum_{i=1}^nn_i=N+1$ is the number of all bodies. Thus, for each
``double planet'' ($n_i=1$), the mass of the satellite equals $\mu
m_i\theta_i/(1-\theta_i)$ where the parameter $\theta_i:=\nu
m_{i1}/(m_i+\nu m_{i1})\in(0,1/2]$ is not necessarily small (since
$m_{i1}$ is not necessarily bounded). 

We also assume that the distance $R_i$ between the Sun and the $i$th
planet is of order $R\gg1$, while each satellite is at the distance
$r_{ij}$ of order 1 from its planet. Finally, ``the years are much
longer than the months'', i.e.\ the angular frequency $\fomega_i$ of
the rotation of each planet around the Sun is of order $\somega$,
while the angular frequencies $\Omega_{ij}$ of the rotations of its
satellites about it have order 1 where $0<\omega\ll1$. More
precisely, let a set of non-vanishing real numbers
 \begin{equation} \label {chast}
\fomega_i=\somega\Omega_{i0}, \quad \Omega_{ij}, \qquad 1\le i\le n,
\quad 1\le j\le n_i,
 \end{equation}
called {\it the set of angular frequencies} satisfies the conditions
 \begin{equation} \label {poryadki:chastot'}
 {c}\le|\Omega_{i0}|
 \le |\Omega_{ij}|\le1, \qquad 1\le i\le n, \quad 1\le j\le n_i,
 \end{equation}
 \begin{equation} \label {poryadki:chastot''}
 \left | |{\Omega_{i0}}|-|{\Omega_{i'0}}|\right | \ge{c}, \ \
 i<i', \quad
 \left | |\Omega_{ij}|-|\Omega_{ij'}|\right | \ge{c},
 \ \ 1\le j<j'\le n_i.
 \end{equation}
Here $c$ is a suitable real number in the interval $0<c<1$.

Suppose that the parameters $\omega,\mu,g,R$ satisfy the natural
relations $\omega^2R^3=g$ and $1=g\mu$ corresponding to Kepler's second
law for the planets ($\fomega_i^2R_i^3=g\m_0$) and the
satellites ($\Omega_{ij}^2r_{ij}^3=g\m_i$), for the chosen unities of
mass, distance and time. Thus, $\grav=\omega^2R^3=1/\mu$,
 \begin{equation} \label {nov:*}
\rho^3=\frac{1}{R^3}=\omega^2\mu
 \end{equation}
and the problem has $N+3$ independent parameters: $N$ parameters $m_i,m_{ij}>0$ and three small parameters $\mu$, $\nu$ and $\somega$ (here $\rho:=1/R$). 
We emphasize that the initial $N+1$ body problem does not include the parameter $\somega$ (and, hence, $\rho$ related with it), but it was introduced by us as an additional (``scaling'') small parameter. Namely, by means of this small parameter, we construct a $N+3$-parameter family (with parameters $m_i,m_{ij},\mu,\nu,\somega>0$) of ``scaling'' transformations of phase variables (\ref {eq:rel:coord}), (\ref {eq:rel:impuls}), (\ref {eq:zamena}). These transformations bring solutions $(\M(t),\p(t))$, discovered by us, of the planetary system with satellites (with ratio of ``months'' to ``years'' $\fomega_i/\Omega_{ij}$ of order $\somega\ll1$) to motions $(\px(t),\pxxi(t))$, that are close to ``generating'' (see below) circular motions $(\mx^\beta(t),\mxxi^\beta(t))$ along circles whose radii have order $1$. (The indicated ``scaling'' is necessary, since the radii of the corresponding circles for ``non-scaled planets'' and ``satellites'' equal $R_i,r_{ij}$ and have orders $R=\frac1\rho\gg1$ and $1$.) Therefore the only imposed restrictions to the parameters of the problem are as follows: the mass of the Sun is $\m_0=1$, the distances from the satellites to their planets are of order 1, and the gravitational constant is $g=1/\mu$. This does not cause any loss of generality due to remark~\ref {rem:relation}.

\subsection{Main results}

Let us describe the ``generating'' relatively-periodic solutions of the $N+1$
body problem under consideration. These are the relatively-periodic solutions of
the ``model'' problem (described below), whose phase trajectories form an $N$-dimensional invariant torus $\Lambda\o$ in a ``model'' phase space. In order to construct solutions of the initial $N+1$ body problem, that are close to ``generating'' ones, we will introduce three dynamical systems with $2N$ degrees of freedom, called {\it model}, {\it unperturbed} and {\it perturbed} problems, respectively. 
The first and the second systems (the model and unperturbed ones) depend on $N+1$ parameters $m_i,m_{ij}>0$ and $\somega\in\bbR$, and the second system is obtained from the first one by a perturbation of order $O(\somega^2)$. The third system (the perturbed one) depends on $N+5$ parameters $m_i,m_{ij}>0$ and $\somega,\varepsilon,\mu,\nu,\rho\in\bbR$, $\varepsilon\ne0$, and it is obtained from the second system by a regular 4-parameter perturbation with small parameters $\varepsilon,\mu,\nu,\rho$ (which are assumed to be 0 for the second system).

1) The {\it model system} is a Hamiltonian system with $2N$ degrees of freedom and $N+1$ parameters,
with the following Hamilton function and the symplectic structure:
 \begin{equation} \label {eq:model:H:omega}
 \somega H_0(\mx_*,\mxxi_*;m_*)+H_1(\mx_{**},\mxxi_{**};m_*,m_{**}), \quad 
 d\mxxi\wedge d\mx = \oomega_0+\oomega_1 ,
 \end{equation}
where the functions $H_0(\mx_*,\mxxi_*)=H_0(\mx_*,\mxxi_*;m_*)$, $H_1(\my_{**},\meeta_{**})=H_1(\my_{**},\meeta_{**};m_*,m_{**})$ and the closed 2-forms $\oomega_0,\oomega_1$ are defined by the formulae
 \begin{equation} \label {eq:model:H}
 H_0 (\mx_*,\mxxi_*)
 :=\sum^n_{i=1}\left(\frac{\mxxi^2_i}{2m_i}-\frac{m_i}{|\mx_i|}\right), \
 H_1 (\my_{**},\meeta_{**})
 :=
\sum^n_{i=1}\sum^{n_i}_{j=1}\left(\frac{\meeta^2_{ij}}{2m_{ij}}-\frac{m_im_{ij}}{|{\my}_{ij}|} \right) ,
 \end{equation}
 \begin{equation} \label {eq:model:omega}
 \oomega_0 
 := d\mxxi_*\wedge d\mx_*
 =\sum_{i=1}^nd\mxxi_i\wedge d\mx_i, \quad
 \oomega_1 
 :=d\meeta_{**}\wedge d\my_{**}
 =\sum_{i=1}^n\sum_{j=1}^{n_i}d\meeta_{ij}\wedge d\my_{ij} .
 \end{equation}
Here $\mx_i,\my_{ij}\in\bbR^2\setminus\{(0,0)\}$ are ``coordinates'',  $\mxxi_i,\meeta_{ij}\in\bbR^2$ are ``momenta'', $m_i,m_{ij}>0$ and $\somega\in\bbR$ are parameters. 
From now on, $\mx_*\in(\bbR^2)^n$ denotes the tuple of vectors $\mx_i$, and $\my_{**}\in(\bbR^2)^{N-n}$ denotes the tuple of vectors $\my_{ij}$ ($1\le i\le n$, $1\le j\le n_i$). For $\somega\ne0$, the model problem (\ref{eq:model:H:omega}), (\ref {eq:model:H}), (\ref{eq:model:omega}) splits into $N$ independent Kepler's problems (for ``scaled planets'' and ``satellites'').

Let us assume that $\somega>0$ and the collection (\ref {chast}) is maximally relatively-resonant, i.e.\ has the form
 \begin{equation} \label {maxrez}
  \begin{array} {rcll}
  \fomega_i   &=& \fomega_1+k_i\frac{2\pi}{T},    \qquad & 1\le i\le n, \\
  \Omega_{ij}&=& \fomega_1+K_{ij}\frac{2\pi}{T}, \qquad & 1\le j\le n_i,
  \end{array}
 \end{equation}
where $k_i,K_{ij}\in\bbZ$, $T>0$. A solution $(\mx^0(t),\mxxi^0(t))=(\mx_*^0(t),\my_{**}^0(t),\mxxi_*^0(t),\meeta_{**}^0(t))$ of the model problem (\ref{eq:model:H:omega}), (\ref {eq:model:H}), (\ref{eq:model:omega}) will be called a {\em main generating solution} if it is a tuple of circular solutions
 \begin{equation} \label {eq:gen:sol}
 (\mx_\ell^0(t),\mxxi_\ell^0(t)) :=  \frac{e^{i\fomega_\ell t}}{\Omega_{\ell 0}^{2/3}}
 (1,i\Omega_{\ell 0} m_\ell) ,  \
 (\my_{\ell j}^0(t),\meeta_{\ell j}^0(t)) :=   
 \frac{e^{i\Omega_{\ell j}t}}{\Omega_{\ell j}^{2/3}}
 m_\ell^{1/3} (1,i\Omega_{\ell j}m_{\ell j}) 
 \end{equation}
of the corresponding Kepler problems (for the ``scaled planets'' around the Sun and the ``satellites'' around the planets) with angular frequencies (\ref {chast}) of the form (\ref {poryadki:chastot'}), (\ref {poryadki:chastot''}), (\ref {maxrez}), where $\fomega_\ell=\somega\Omega_{\ell 0}$ and the plane $\bbR^2$ is identified with $\bbC$. 
Any solution $(\mx^\beta(t),\mxxi^\beta(t))$, that is a tuple of circular solutions 
 $(\mx_\ell^\beta(t),\mxxi_\ell^\beta(t)):=e^{i\beta_\ell} (\mx_\ell^0(t),\mxxi_\ell^0(t))$, 
 $(\my_{\ell j}^\beta(t),\meeta_{\ell j}^\beta(t)):=e^{i\beta_{\ell j}} 
 (\my_{\ell j}^0(t),\meeta_{\ell j}^0(t))$, $1\le \ell\le n$, $1\le j\le n_\ell$, 
with a given collection of frequencies, will be called a {\it generating solution}. Here $\beta_\ell,\beta_{\ell j}\in\bbR/2\pi\bbZ$ are arbitrary constants, $\beta:=(\beta_*,\beta_{**})\in(\bbR/2\pi\bbZ)^N$.
The union of phase trajectories of all generating solutions (with $\somega\ne0$) is a $N$-dimensional torus $\Lambda\o=\Lambda\o(\somega,\fomega_*,\Omega_{**};m_*,m_{**})$. Indeed:
the polar angles of $N$ radius vectors $\mx_\ell$, $\my_{\ell j}$ (drawn from the Sun to the planets and from the planets to their satellites) can be used as coordinates on it. 
Any generating solution can be obtained from the solution $(\mx^0(t),\mxxi^0(t))$ by shifts along the flows of $N$ pairwise commuting Hamiltonian vector fields with Hamiltonian functions $I_i:=[\mx_i,\mxxi_i]$, $I_{ij}:=[\my_{ij},\meeta_{ij}]$. Due to the condition (\ref {maxrez}), generating solutions are $(T,\alpha)$-periodic with relative period 
 \begin{equation} \label {parametry}
T>0, \qquad \alpha=\fomega_1T+2\pi k\in(-\pi,\pi],
 \end{equation}
where $k\in\bbZ$ is a suitable integer. 

In dependence on whether the satellites are absent ($N=n$) or present ($N>n$), the following condition will be called {\it the nondegeneracy condition}:
 \begin{equation} \label {alpha}
\alpha\ne0 \quad \mbox{при }N=n, \qquad \quad
|\alpha|>\somega^2T \quad \mbox{при }N>n.
 \end{equation}

2) The {\it unperturbed problem} is the dynamical system (which is non-Hamiltonian, but it is ``$0$-Hamiltonian'', i.e.\ a semidirect product of Hamiltonian systems, cf.\ (\ref {eq:lam:Ham})) with $2N$ degrees of freedom and $N+1$ parameters, consisting of two parts. The first part of the system is the autonomous Hamiltonian system with $2n$ degrees of freedom, which is given in the space with coordinates $\ux_i\in\bbR^2\setminus\{(0,0)\}$, $\uxxi_i\in\bbR^2$ ($1\le i\le n$), with the Hamilton function and the symplectic structure 
 \begin{equation} \label {eq:unpert0}
 \somega H_0(\ux_*,\uxxi_*;m_*), \qquad 
 \oomega_0 = d\uxxi_*\wedge d\ux_*.
 \end{equation}
The second part of the system is the non-autonomous Hamiltonian system with $2(N-n)$ degrees of freedom, which is given in the space with coordinates $\uy_{ij}\in\bbR^2\setminus\{(0,0)\}$, $\ueeta_{ij}\in\bbR^2$ ($1\le i\le n$, $1\le j\le n_i$), with the Hamilton function and the symplectic structure 
 \begin{equation} \label {eq:unpert1} 
 H_1(\ux_{**},\uxxi_{**};m_*,m_{**}) + \somega^2 \Phi(\ux_*(t),\uy_{**};m_{**}), \qquad 
 \oomega_1= d\ueeta_{**}\wedge d\uy_{**} ,
 \end{equation}
cf.\ (\ref {eq:model:H}) and (\ref {eq:model:omega}). Here $\ux_*(t)=(\ux_1(t),\dots,\ux_n(t))$ denote the corresponding components of the solution $(\ux_*(t),\uxxi_*(t))=(\ux_1(t),\dots,\ux_n(t),\uxxi_1(t),\dots,\uxxi_n(t))$ of the first part (\ref {eq:unpert0}) of the system, and the functions $\Phi(\ux_*,\uy_{**})=\Phi(\ux_*,\uy_{**};m_{**})$ is defined by the formulae
 \begin{equation} \label {F:xy}
\Phi(\ux_*,\uy_{**};m_{**}):=\sum_{i=1}^n \sum_{j=1}^{n_i} m_{ij} F(\ux_i,\uy_{ij}), \ 
 \ F(\hx,\hy):=\frac{\hx^2\hy^2-3\langle\hx,\hy\rangle^2}{2|\hx|^5}.
 \end{equation}
Here $\ux_i,\uy_{ij}\in\bbR^2\setminus\{(0,0)\}$ are ``coordinates'', $\uxxi_i,\ueeta_{ij}\in\bbR^2$ are ``momenta'', $m_i,m_{ij}>0$ and $\somega\in\bbR$ are parameters of the unperturbed system. The first part (\ref {eq:unpert0}) of the unperturbed system coincides with the corresponding part of the model system (\ref {eq:model:H:omega}), and for $\somega\ne0$ it is a collection of $n$ independent Kepler's problems for ``scaled planets''. The second part (\ref {eq:unpert1}) of the unperturbed system is $O(\somega^2)$--close to the second part of the model system (\ref {eq:model:H:omega}), 
 and it is the collection of $N-n$ ``sidereal Hill problems'' for the ``satellites'' (cf.\ \S \ref {subsec:Hill:0}), 
provided that $\somega\ne0$ and the ``scaled planets'' perform circular motions $\ux_i(t)=\mx_i^\beta(t)$, $\uxxi_i(t)=\mxxi_i^\beta(t)$ by virture of the first part (\ref {eq:unpert0}) of the unperturbed problem.

For any $|\somega|>0$ small enough, denote by $(\ux_*^0(t),\uy_{**}^0(t),\uxxi_*^0(t),\ueeta_{**}^0(t))=(\uy^0(t),\ueeta^0(t))$ such a $(T,\alpha)$-periodic solution of the unperturbed problem (\ref {eq:unpert0}), (\ref {eq:unpert1}), that is $O(\somega)$--close to the generating solution $(\mx^0(t),\mxxi^0(t))$ and satisfies the following conditions: $\ux_i^0(t)=\mx_i^0(t)$, $\uxxi_i^0(t)=\mxxi_i^0(t)$ and the vectors $\uy_{ij}^0(0)$ are collinear with the abscissa axis. Let 
 \begin{equation} \label {eq:unpert:sol}
 (\ux^\beta(t),\uxxi^\beta(t)) = (\ux_*^\beta(t),\uy_{**}^\beta(t),\uxxi_*^\beta(t),\ueeta_{**}^\beta(t)), \quad \beta=(\beta_*,\beta_{**})\in(\bbR/2\pi\bbZ)^N
 \end{equation}
be a $(T,\alpha)$-periodic solution of the unperturbed system, obtained from the indicated ``main'' solution $(\ux^0(t),\uxxi^0(t))$ by a composition of some shifts (in time periods $\beta_i,\beta_{ij}$) along the flows of $N$ pairwise commuting Hamiltonian vector fields with the Hamilton functions $I_i:=[\ux_i,\uxxi_i]$, $I_{ij}:=[\uy_{ij},\ueeta_{ij}]$ and the symplectic structure $d\uxxi\wedge d\ux$, where $\beta_\ell,\beta_{\ell j}\in\bbR/2\pi\bbZ$ are arbitrary constants, $\beta:=(\beta_*,\beta_{**})$.
Let $\Lambda=\Lambda(\somega;\fomega_*,\Omega_{**};m_*,m_{**})$ be the union of phase trajectories of $(T,\alpha)$-periodic solutions $(\ux^\beta(t),\uxxi^\beta(t))$, $\beta\in(\bbR/2\pi\bbZ)^N$. 
We prove (\S \ref {par3:1:5}) by means of the averaging method that such a solution $(\ux^0(t),\uxxi^0(t))$ exists and it is $O(\somega^2)$--close to the main generating solution $(\mx^0(t),\mxxi^0(t))$ (hence any solution $(\ux^\beta(t),\uxxi^\beta(t))$ is $O(\somega^2)$--close to the corresponding generating solution $(\mx^\beta(t),\mxxi^\beta(t))$, and the $N$-dimensional torus $\Lambda$ is $O(\somega^2)$--close to the torus $\Lambda\o$ when $0<|\somega|\ll1$).

3) The {\it perturbed problem} is the Hamiltonian system with $2N$ degrees of freedom and $N+5$ parameters, with the Hamiltin functions and the symplectic structure 
 \begin{equation} \label {eq:pert:planets}
 \widetilde H= \somega \widetilde H_0(\px_*,\pxxi_*)
 + \varepsilon \left( \widetilde H_1(\py_{**},\peeta_{**})
 + \somega^2 \widetilde\Phi (\px_*,\py_{**}) \right), \ \
 \widetilde\oomega = 
 d\pxxi_*\wedge d\px_*+\varepsilon d\peeta_{**}\wedge d\py_{**},
 \end{equation}
where the functions $\widetilde H_0(\px_*,\pxxi_*)=\widetilde H_0(\px_*,\pxxi_*;\bar m_*,\mu)$, $\widetilde H_1(\px_{**},\pxxi_{**})=\widetilde H_1(\py_{**},\peeta_{**};\bar m_*,m_{**},\nu)$, 
$\widetilde\Phi=\widetilde\Phi(\px_*,\py_{**};\bar m_*,m_{**},\mu,\nu,\rho)$ are analytic, are in involution with the function 
$$
\widetilde I=\sum_{i=1}^n\left(
[\px_i,\pxxi_i]+\varepsilon\sum_{j=1}^{n_i}[\px_{ij},\pxxi_{ij}] \right)
$$ 
for any values of the parameters, and have the property $\widetilde H_0|_{\mu=0}=H_0$, $\widetilde H_1|_{\nu=0}=H_1$, $\widetilde\Phi|_{\mu=\nu=\rho=0}=\Phi$ (the functions $\widetilde H_0,\widetilde H_1,\widetilde\Phi$ and parameters $\bar m_i$ are defined in more details in (\ref {H0}), (\ref {G}) and (\ref {m})), cf.\ (\ref {eq:model:omega}).
Here $\px=(\px_*,\py_{**})\in\widetilde Q=\widetilde Q(m_*,m_{**},\mu,\nu,\rho)\subset(\bbR^2)^N$ are ``coordinates'', 
$\pxxi=(\pxxi_*,\peeta_{**})\in(\bbR^2)^N$ are ``momenta'', $m_i,m_{ij}>0$ and $\somega,\varepsilon,\mu,\nu,\rho\in\bbR$ are parameters of the system, $\varepsilon\ne0$. Here the domain 
 $$
\bigcup\limits_{(m_*,m_{**},\mu,\nu,\rho)\in\bbR^{N+3}}
\widetilde Q(m_*,m_{**},\mu,\nu,\rho)\times(\bbR^2)^N\times\{(m_*,m_{**},\mu,\nu,\rho)\} \subset(\bbR^2)^{2N}\times\bbR^{N+3}
 $$ 
contains the torus $\Lambda\o(\somega,\fomega_*,\Omega_{**};m_*,m_{**})\times\{(m_*,m_{**},0,0,0)\}$, coincides with the domain of (smooth and analytical) functions $H_0,H_1,\widetilde R_0,\widetilde R_1,\widetilde\Phi$ (as functions in variables $\px,\pxxi,m_*,m_{**},\mu,\nu,\rho$), moreover the complement of this domain in $(\bbR^2)^{2N}\times\bbR^{N+3}$ is a real-algebraic subset. Here the functions $\widetilde R_0$ and $\widetilde R_1$ are defined by the conditions $\widetilde H_0=H_0+\mu\widetilde R_0$ and $\widetilde H_1=H_1+\nu\widetilde R_1$.  
It turns out (\S \ref {sec:pert}) that there exists a smooth family of tori $\widetilde\Lambda=\widetilde\Lambda(\somega,\varepsilon,\mu,\nu,\rho;\fomega_*,\Omega_{**};m_*,m_{**})$, containing the phase trajectories of all $(T,\alpha)$-periodic solutions near $\Lambda$, such that $\widetilde\Lambda|_{\varepsilon=\mu=\nu=\rho=0}=\Lambda$.

A key (and apparently new) observation of this work is the following. Consider the planar $N+1$ body problem with the center of masses at the origin and with the parameters $m_i,m_{ij},\mu,\nu>0$, and 
the following canonical (by Lemma \ref {lem3:1}) coordinates in its $4N$-dimensional phase space $M^{4N}\approx T^*\widehat Q$ (cf.\ (\ref {eq:4N})):
 \begin {equation} \label {eq:rel:coord} 
\widehat\M_i:=\C_i-\M_0, \quad
\widehat\M_{ij}:=\M_{ij}-\M_i, \qquad \mbox{где} \quad \C_i
:=\frac{m_i\M_i+\nu\sum_{\ell=1}^{n_i}m_{i\ell}\M_{i\ell}}{m_i+\nu\sum_{\ell=1}^{n_i}m_{i\ell}},
 \end {equation}
 \begin {equation} \label {eq:rel:impuls} 
 \widehat\p_i := \p_i+\sum_{\ell=1}^{n_i}\p_{i\ell}, \quad
 \widehat\p_{ij}:=\p_{ij}
 -\frac{\nu m_{ij}}{m_i+\nu\sum_{\ell=1}^{n_i}m_{i\ell}} \left(\p_i+\sum_{\ell=1}^{n_i}\p_{i\ell}\right),
 \end {equation}
$1\le i\le n$, $1\le j\le n_i$.
It turns out (Lemma \ref {lem3:1}) that the power transformation 
 \begin {equation} \label {eq:zamena}
\px_i:=\rho\widehat\M_i, \
\py_{ij}:=\widehat\M_{ij}, \quad
\pxxi_i:=\frac{\widehat\p_i}{\sqrt{\mu\rho}}, \
\peeta_{ij}:=\frac{\widehat\p_{ij}}{\mu\nu}, \quad 
\widetilde H:=
 \sqrt{\frac{\rho}{\mu}} H, \
\widetilde\oomega:=
 \sqrt{\frac{\rho}{\mu}} \oomega
 \end {equation}
brings the $N+1$ body problem on $M^{4N}\approx T^*\widehat Q$ under consideration to the $N+3$-parameter subfamily of the $N+5$-parameter (real-analytic) family of perturbed systems (\ref {eq:pert:planets}), in which the parameters $m_i,m_{ij},\somega,\varepsilon,\mu,\nu,\rho$ are positive and related by fractional-power relations
 \begin{equation} \label {eq:rho:varepsilon}
\rho=\somega^{2/3}\mu^{1/3}, \qquad \varepsilon=\somega^{1/3}\mu^{2/3}\nu.
 \end{equation}

When we talk about closeness of some solution (or of some submanifold of the phase space) of one system to a solution (resp.\ a submanifold) of another system, we assume that the phase spaces of the model, unperturbed and perturbed systems are identified with the corresponding open subsets of the vector space $(\bbR^2)^N\times(\bbR^2)^N$ with coordinates $(\px,\pxxi)$. Besides, we identify the phase space $T^*\widehat Q\approx M^{4N}$ (depending on the parameters $m_i,m_{ij},\mu,\nu>0$) of the $N+1$ body problem under consideration with the corresponding subset (with the corresponding values of $m_i,m_{ij},\mu,\nu$ and any $\rho>0$) of the phase space $\widetilde Q\times(\bbR^2)^N$ of the perturbed system (\ref {eq:pert:planets}) via the transformation (\ref {eq:zamena}).

{\it Symmetric} periodic solutions of the three-body problem were
studied already by Poincar\'e \cite {1}. Recall the definition of a
symmetric solution of the planar $N+1$ body problem.

\begin {scDef} \label {def:sym}
Consider a problem describing the motion of $N+1$ particles in a
Euclidean plane. A solution of this problem will be called {\it
symmetric} if there exists a line $l$ in the plane, called the axis
of symmetry, and a time $t=t_0$ satisfying one of the following
(equivalent) conditions called a ``parade'' of the particles:

1) at the time $t=t_0$, all points are placed on the line $l$ (i.e.\
a ``parade'' of the particles is observed) and their velocities are
orthogonal to the line $l$;

2) the locations (and, hence, also the velocities) of all particles
at any time $t\in\bbR$ can be obtained from their locations at the
time $2t_0-t$ by reflecting with respect to the axis $l$.

All particles of the system are assumed to be numbered. 
Any solution of the $N+1$ body problem obtained from a
symmetric solution by shifting the time and by rotating the plane is
also symmetric. Similarly to the case of $(T,\alpha)$-periodic
solutions, we will not distinguish such solutions and will regard
them as a {\it single symmetric solution}.
 \end {scDef}

Exactly $2^{N-2}$ of the generating solutions $(\mx^\beta(t),\mxxi^\beta(t))$ are symmetric. They are determined by the corresponding tuples of values $\beta_\ell,\beta_{\ell j}\in\{0,\pi\}$ with $\beta_1=0$. Here the tuples $\beta=(\beta_*,\beta_{**})\in\{0,\pi\}^N$ and $(\beta_*+\pi k_*,\beta_{**}+\pi K_{**})$ correspond to the same relatively-periodic solution (we assume without loss of generality that the tuple $(k_*,K_{**})\in\bbZ^N$ has no common factors). 
Let
 \begin{equation} \label {eq:B}
B\subset\{0,\pi\}^N\subset(\bbR/2\pi\bbZ)^N
 \end{equation} 
be a subset of cardinality $2^{N-2}$ containing exactly one element of each such a pair of tuples. 
The symmetric $(T,\alpha)$-periodic solutions are characterized by the condition that ``parades'' of the planets and satellites are observed, moreover they repeat each half of the
period, $T/2$. This means that all the particles of the system are
posed on a line, which turns by the angle $\alpha/2$ after the time-interval $T/2$.

\begin {The} [(on the number of families of relatively-periodic solutions)] \label {th1}
There exist constants $\somega_0,C>0$ and continuous positive functions 
$\mu_0=\mu_0(m_*,m_{**},\somega,c)$, $\nu_0=\nu_0(m_*,m_{**},\somega,c)$,
$0<\somega\le\somega_0$, $0<c<1$, such that, for any parameter values
$\somega,c,\mu,\nu$ with the properties $0<\somega\le\somega_0$,
$0<c<1$, $0<\mu\le\mu_0(m_*,m_{**},\somega,c)$, $0<\nu\le\nu_0(m_*,m_{**},\somega,c)$, for any tuple of angle frequencies {\rm(\ref {chast})} of the form {\rm(\ref {poryadki:chastot'}), (\ref {poryadki:chastot''}), (\ref {maxrez})} the following properties hold. Suppose that the parameters
{\rm(\ref {parametry})} satisfy either the nondegeneracy condition
{\rm(\ref {alpha})} or the following conditions {\rm(}which are more delicate when satellites are present{\rm)}:
 \begin{equation} \label {eq:fine}
\alpha\ne0, \qquad
 \alpha-\frac{\fomega_i^2}{4\Omega_{ij}}T\not\in\left[-C\somega^3T,C\somega^3T\right]+2\pi\bbZ
 \end{equation}
for $1\le i\le n$, $1\le j\le n_i$. Then the $N+1$ body problem of
the type of planetary system with satellites, $N\ge2$, has exactly
$2^{N-2}$ symmetric $(T,\alpha)$-periodic solutions $(\px^\beta(t),\pxxi^\beta(t))$, $\beta\in B\subset\{0,\pi\}^N$ {\rm(см.\ (\ref {eq:B}))}, that in coordinates {\rm(\ref {eq:zamena})} are
$O(\omega^2)$-close to the symmetric generating solutions $(\mx^\beta(t),\mxxi^\beta(t))$ {\rm(}corresponding to independent circular rotations of the ``scaled planets'' around the Sun and the ``satellites'' around the planets{\rm)} with the angular frequencies {\rm(\ref {chast})}. 
Each of these $2^{N-2}$ solutions depends smoothly on the pair $(T,\alpha)$, provided that $\beta$, the parameters of the problem and the integers $k_i,K_{ij}$ in {\rm(\ref {maxrez})} are fixed. 
All such solutions {\rm(}for all possible values of the parameters of the problem as above, and for fixed $\somega\in(0,\somega_0]$, $k_i,K_{ij}$ and $\beta${\rm)} form a $2+(N+2)$-parameter subfamily of the $2+(N+4)$-parameter smooth family of solutions of the perturbed problem {\rm(\ref {eq:pert:planets})} {\rm(}with fixed $\somega${\rm)} with parameters $T,\alpha$ and $m_i,m_{ij}>0$, $|\mu|\le\mu_0(m_*,m_{**},\somega,c)$, $|\nu|\le\nu_0(m_*,m_{**},\somega,c)$, $|\rho|\le\mu_0(m_*,m_{**},\somega,c)$, $|\varepsilon|\le\mu_0(m_*,m_{**},\somega,c)$, where the parameters of the subfamily are related by the relations $\mu,\nu>0$ and {\rm(\ref {eq:rho:varepsilon})}.
For each of these solutions, parades are observed, which repeat each
time-interval $\frac T2$: all of the particles of the system are
posed on a line \(which turns by the angle $\alpha/2$ after the time-interval $T/2$\).
 \end {The}

Let $\Lambda\o\subset (\bbR^2)^{2N}$ be the $N$-dimensional torus 
formed by the phase trajectories of the generating solutions (see (\ref {parametry})). 
Let $\Sigma\subset (\bbR^2)^{2N}$ be a ``transversal surface'' (called a cross section) of codimension 2 in the phase space, that transversally intersects invariant two-dimensional tori lying on $\Lambda\o$ and corresponding to the $(T,\alpha)$-periodic solutions:
 $$
\Sigma:=\left\{\sum_{i=1}^n(\varphi_i+\sum_{j=1}^{n_i}\varphi_{ij})=
 \frac{T_{\min}}{T}
 \sum_{i=1}^n(k_i\varphi_i+\sum_{j=1}^{n_i}K_{ij}\varphi_{ij})=0(\mod2\pi) \right\}.
 $$
Here $\varphi_i$ and $\varphi_{ij}$ are polar angles of the radius-vectors $\px_i$ and $\py_{ij}$,
$k_i,K_{ij}$ are integers in (\ref {maxrez}), $T_{\min}$ is the
minimal positive period, hence the integer $T/T_{\min}$ is the
greater common divisor of the collection of integers $k_i,K_{ij}$.
Let
$\widetilde\Lambda =\widetilde\Lambda(\somega,\varepsilon,\mu,\nu,\rho;\fomega_*,\Omega_{**};m_*,m_{**})$ be the $N$-dimensional torus of the indicated family of tori, containing the torus $\Lambda =\widetilde\Lambda(\somega,0,0,0,0;\fomega_*,\Omega_{**};m_*,m_{**})$.
Let $\Psi$ be the generating function of the ``succession
map'' $g_{H-\fomega_1I}^T$ of the $N+1$ body problem under
consideration (see (\ref {M}) and Definition \ref {def:gen:func}). Consider the
smooth function $\bar S=\Psi|_{\widetilde\Lambda\cap\Sigma}$ on the
$(N-2)$-dimensional torus $\widetilde\Lambda\cap\Sigma$. Since the
function $\bar S$ is defined on a $(N-2)$-dimensional torus, it has
at least $N-1$ critical points, moreover at least $2^{N-2}$ points
counted with multiplicities \cite {35}. We will prove (see Theorem
\ref {th:ust}) the same lower bound for the number of
$(T,\alpha)$-periodic solutions of the problem under consideration.
Observe that the function $\bar S$ has at least one critical point,
since it is defined on a closed manifold. We will also prove that each
critical point of the function $\bar S$ corresponds to a
$(T,\alpha)$-periodic solution of the $N+1$ body problem. Moreover,
we will offer sufficient conditions that guarantee the {\it orbital
\(structural\) stability in linear approximation} (see definition
\ref {def5}) of such a solution.

The following condition will be called {\it the strong nondegeneracy
condition}:
 \begin{equation} \label {alpha'}
 0<|\alpha|<\pi \ \mbox{ при }N=n, \qquad
\somega^2T<|\alpha|<\pi-\somega^2T \ \mbox{ при }N>n.
 \end{equation}

The following conditions will be called the {\it property of having
fixed sign}:

1) all planets rotate ``to the same side'', i.e.\ the angular
frequencies of the rotations of the planets around the Sun have the
same sign:
 \begin{equation} \label {polozh}
 \fomega_i\fomega_{i'}>0,
 \qquad 1\le i<i'\le n;
 \end{equation}

2) the function $\bar S$ on the $(N-2)$-dimensional torus is either a Morse function or has at least one nondegenerate critical point of a local minimum (this condition
is assumed to be always true if $N=2$).

\begin {The} [(on stability of a relatively-periodic solution)] \label {th:ust}
Under the hypothesis of Theorem {\rm\ref {th1}}, there exists a
smooth $N$-dimen\-si\-onal torus $\widetilde\Lambda$ in the phase space of the problem with 
coordinates {\rm(\ref {eq:zamena})} 
that is $O(\somega^2)$--close to the torus $\Lambda\o$, smoothly depends
on the pair $(T,\alpha)$ and has the following properties.

{\rm(A)} The phase orbits of all $(T,\alpha)$-periodic solutions of
the $N+1$ body problem that are $O(\somega)$--close to the torus
$\Lambda\o$ are contained in the torus $\widetilde\Lambda$. Moreover
their intersection points with the cross section $\Sigma$
coincide with critical points of the function $\bar
S:=\Psi|_{\widetilde\Lambda\cap\Sigma}$ defined on the $(N-2)$-dimensional
torus $\widetilde\Lambda\cap\Sigma$. Here $\Psi$ is the generating
function of the ``succession map'' $g_{H-\fomega_1I}^T$ of the problem
under consideration {\rm(see (\ref {M}) and Definition \ref {def:gen:func})}. The
function $\Psi|_{\widetilde\Lambda}$ is an even function in the
collection $\vvarphi$ of angle variables 
$\varphi_i|_{\widetilde\Lambda}$,
$\varphi_{ij}|_{\widetilde\Lambda}$. The phase orbits of the
symmetric $(T,\alpha)$-periodic solutions contain the points
$\vvarphi$ of the torus $\widetilde\Lambda$ having the property
$\vvarphi=-\vvarphi$.

{\rm(B)} Suppose that the property of having fixed sign holds,
moreover either the strong nondegeneracy condition {\rm(\ref
{alpha'})} or the following conditions hold {\rm(}which are more delicate when satellites are present{\rm)}:
 \begin{equation} \label {eq:fine'a}
 \alpha\not\in\{0,\pi\}, \qquad  
 \frac{{\rm sgn\,}\fomega_1+{\rm sgn\,}\Omega_{ij}}2\alpha - \frac{\fomega_i^2}{8|\Omega_{ij}|}T \not\in
 \left[-\frac{C}2\somega^3T,\frac{C}2\somega^3T\right]+\pi\bbZ,
 \end{equation}
 \begin{equation} \label {eq:fine'b}
  \frac{{\rm sgn\,}\Omega_{ij}+{\rm sgn\,}\Omega_{i'j'}}2\alpha-
 \left(
 \frac{\fomega_i^2}{|\Omega_{ij}|}+\frac{\fomega_{i'}^2}{|\Omega_{i'j'}|}
 \right)
 \frac T8\not\in\left[-C\somega^3T,C\somega^3T\right]+\pi\bbZ
 \end{equation}
for all $1\le i,i'\le n$, $1\le j\le n_i$ and $1\le j'\le n_{i'}$.
Then the $(T,\alpha)$-periodic solution corresponding to any
nondegenerate critical point of a local minimum of the function $\bar
S$ is orbitally structurally stable in linear approximation.
 \end {The}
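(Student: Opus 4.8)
The plan is to pass to the frame rotating with angular velocity $\omega_1$, in which a $(T,\alpha)$-periodic solution with $\alpha=\omega_1T+2\pi k$ becomes a genuine $T$-periodic orbit, i.e.\ a fixed point of the succession map $g^T_{H-\omega_1I}$ (here $I$ is the total angular momentum, which generates rotations). First I would introduce action--angle coordinates in a neighbourhood of $\Lambda\o$, writing $H$ as the sum of the $N$ decoupled Kepler Hamiltonians plus a coupling whose magnitude is governed by $\mu,\nu,\omega$ through the relation (\ref{nov:*}). Separating the slow planetary angles $\varphi_i$ from the fast satellite angles $\varphi_{ij}$, I would invoke the averaging method (the semidirect-product normal form established in theorems \ref{th1'}--\ref{th:nondeg}) to produce an averaged system. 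The nondegeneracy condition (\ref{alpha}), $|\alpha|>\omega^2T$, is precisely what keeps the reduced twist nondegenerate; persistence of the resonant torus (an implicit-function / normally-hyperbolic argument applied to the averaged flow) then yields the smooth invariant $N$-torus $\widetilde\Lambda$, which is $O(\omega^2)$-close to $\Lambda\o$ and depends smoothly on $(T,\alpha)$.

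For the remaining assertions of part (A) I would use the variational characterization of periodic orbits in the spirit of Poincar\'e \cite{1}: fixed points of the exact symplectic map $g^T_{H-\omega_1I}$ are exactly the critical points of its generating function $\Psi$ (definition \ref{def:gen:func}). Since every $(T,\alpha)$-periodic orbit that is $\omega$-close to $\Lambda\o$ must lie on the persistent torus $\widetilde\Lambda$, and since the two neutral directions (time shift and overall rotation) are quotiented out by intersecting with the section $\Sigma$, these orbits correspond bijectively to the critical points of $\bar S=\Psi|_{\widetilde\Lambda\cap\Sigma}$ on the $(N-2)$-torus; the lower bound $2^{N-2}$ for their number then follows from \cite{35}. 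Evenness of $\Psi|_{\widetilde\Lambda}$ in $\vvarphi$ is inherited from the time-reversal (reflection) symmetry of the Newtonian problem: the reversing involution acts on $\widetilde\Lambda$ by $\vvarphi\mapsto-\vvarphi$, and its fixed-point set $\{\vvarphi=-\vvarphi\}$ consists exactly of the symmetric orbits, consistent with the count $2^{N-2}$ of theorem \ref{th1}.

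For part (B) I would analyse the linearized monodromy of $g^T_{H-\omega_1I}$ at a fixed point. Exploiting the near-product structure, the monodromy splits, up to an $O(\omega)$ coupling, into a tangential block along $\widetilde\Lambda\cap\Sigma$ whose multipliers are controlled by the Hessian of $\bar S$, and normal blocks given by the variational equations of the individual circular Kepler orbits. At a nondegenerate local minimum of $\bar S$ the tangential Hessian is positive definite; together with the fixed-sign condition (\ref{polozh}) this places the tangential multipliers on the unit circle with a definite Krein sign. The finer frequency conditions (\ref{eq:fine'a})--(\ref{eq:fine'b}) assert that the relevant normal-mode rotation numbers avoid $\pi\bbZ$, which keeps the normal elliptic multipliers simple and bounded away from $\pm1$. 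Combining the two I would conclude that all multipliers lie on the unit circle and cannot collide, so the linearization is strongly (structurally) stable, i.e.\ the orbit is orbitally structurally stable in linear approximation (definition \ref{def5}).

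The main obstacle is this stability argument: a minimum of $\bar S$ controls only the $N-2$ reduced tangential directions, whereas orbital stability concerns the full monodromy on the $4N$-dimensional phase space. The delicate step is to show that the $O(\omega)$ coupling between the slow tangential block and the fast normal blocks cannot push any multiplier off the unit circle. This is exactly where the fixed-sign hypothesis and the Krein-definiteness secured by (\ref{eq:fine'a})--(\ref{eq:fine'b}) are indispensable, and where the bulk of the quantitative estimates --- propagated through the averaging normal form of part (A) --- will be required.
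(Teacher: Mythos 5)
Your overall architecture coincides with the paper's two--stage scheme (averaging to produce the torus $\widetilde\Lambda$ and the normal form of the monodromy, then the generating--function/variational characterization of periodic orbits on $\widetilde\Lambda\cap\Sigma$), and your account of part (A) --- including the attribution of the evenness of $\Psi|_{\widetilde\Lambda}$ and of the symmetric orbits to the reversing involution acting by $\vvarphi\mapsto-\vvarphi$ --- matches what the paper does via lemma \ref{lem3:1}, lemma \ref{lem3:2:} and theorems \ref{th1'}, \ref{th:sym}. But in part (B) there is a genuine gap. In the paper the entire content of the proof of theorem \ref{th:ust} is the \emph{reduction} of the $N+1$ body problem to the hypotheses of the abstract theorems \ref{th1'} and \ref{th:stab}, and the decisive step of that reduction is an explicit computation: one averages the Hill potential $F(\x,\y)=(\x^2\y^2-3\langle\x,\y\rangle^2)/(2|\x|^5)$ along the circular orbits of the satellite's Kepler problem, finds $d\langle F_{ij}\o\rangle(0,0)=0$ and the diagonal Hessian with entries proportional to $-29/8$ and $25/(8I_{ij}^2)$, and thereby obtains $\Delta_{ij}=-\Omega_{i0}^2/(4\Omega_{ij})$ together with $\eta_{ij}={\rm sgn\,}\Omega_{ij}$ and $\partial^2H_{ij}/\partial I_{ij}^2<0$. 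It is precisely this computation that identifies the normal rotation numbers as $\alpha_{ij}\approx\eta_{ij}\bigl(\alpha-\omega_i^2T/(4\Omega_{ij})\bigr)$ and hence produces the specific quantities $\omega_i^2T/(8|\Omega_{ij}|)$ appearing in (\ref{eq:fine'a})--(\ref{eq:fine'b}). Your proposal treats these conditions as given ``frequency conditions'' whose meaning is simply that ``rotation numbers avoid $\pi\bbZ$'', without ever deriving where the constants come from; without the averaged Hill potential you cannot connect the hypotheses of the theorem to any monodromy estimate.

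A second, related imprecision: the mechanism behind (\ref{eq:fine'a})--(\ref{eq:fine'b}) is not that individual multipliers stay away from $\pm1$, but the Krein non--collision condition of theorem \ref{th:stab}, namely that the \emph{sum of any two, possibly coinciding}, rotation numbers $\alpha_{ij}$ avoids $2\pi\bbZ$; this is why (\ref{eq:fine'b}) involves two index pairs $(i,j)$ and $(i',j')$ and why the paper's Step~3 consists of the bookkeeping that converts ``no pairwise sum in $2\pi\bbZ$'' into the three displayed cases $\alpha_{i0}+\alpha_{i'0}$, $\alpha_{i'0}+\alpha_{ij}$, $\alpha_{ij}+\alpha_{i'j'}$. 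Finally, you acknowledge that ``the bulk of the quantitative estimates'' for the stability argument remains to be done; in the paper this bulk is exactly theorems \ref{th:mult} and \ref{th:stab}, which are invoked as established results (their proofs being deferred to a future work), so the honest statement of the proof of theorem \ref{th:ust}(B) is the reduction described above rather than a from--scratch monodromy analysis.
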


Theorem \ref {th:ust}(B) implies that, for $N=3$, in the ``generic
case'', a half of the $(T,\alpha)$-periodic solutions that are close
to the torus $\Lambda\o$ are orbitally stable in linear approximation
(since the function $\bar S$ is defined on a circle and, hence, has
only critical points of local minima and maxima, which alternate on
the circle).

The natural question arises: is the nondegeneracy condition (\ref
{alpha}) necessary for the validity of theorem \ref {th1}? An answer
happens to be affirmative in many cases.

In the following theorem, by {\it ``almost any''} collection of masses 
$\mu_i>0$, $1\le i\le n$, we mean any collection belonging to
the complement in $\bbR^n_{>0}$ to the union of a finite set of
linear subspaces of $\bbR^n$. Moreover each of these subspaces
depends on the collection of integers $k_i$ in (\ref {maxrez}), has
codimension at least 2, and the number of these subspaces does not
exceed $2^{n-2}$. The set ${\cM}^{\rm sym}$ of ``almost all''
collections of masses is described in more detail in \Sec\ref {par3:1:3'}. By
the phase space of satellites, we regard the direct product of big
balls in the phase spaces of the corresponding Kepler problems,
except for a small neighbourhood of ``the set of possible
collisions''.

\begin {The} [(on ``gaps'' in families of relatively-periodic solutions)] \label {th:degen:sym}
Suppose that, under the hypothesis of theorem {\rm\ref {th1}}, the
number $n$ of planets is at least $2$ and there exist two planets
whose angular frequencies satisfy the following resonance relation:
 \begin{equation} \label {degen:3bod'}
 \frac{\fomega_i}{\fomega_{i'}}\in
 \left\{
 {\left.\frac{k}{k+1}\ \right|\ k\in\bbZ\setminus\{-1,0\}}
 \right\}.
 \end{equation}
In this case, $\alpha=0$ automatically. Then there exist an open
dense subset ${\cM}^{\rm sym}\subset\bbR^n_{>0}$ and a nonempty open
subset ${\cM}\subset{\cM}^{\rm sym}$ {\rm(see definition \ref
{def:m:adm} and remark \ref {rem:sym})}, both invariant under
multiplication by any positive real number and having the following
properties. For ``almost any'' collection of planets' masses $\mu
m_i>0$, namely for any collection of planets' masses 
$\mu(m_1,\dots,m_n)\in\cM^{\rm sym}$ \(respectively for any
collection of planets' masses $\mu(m_1,\dots,m_n)\in\cM$, for example
satisfying the inequality
$|\kappa_{ii'}|c_{\kappa_{ii'}}m_{i'}>\sum_{l\ne
i,i'}|\kappa_{il}|c_{\kappa_{il}}m_l$, {\rm see (\ref {cs}), (\ref
{sii'}), (\ref {eq:cs0})}\), there exist numbers $\mu_0,\nu_0>0$ and
an open subset $U_0$ in the phase space of the planets containing the
phase orbits of all symmetric ``circular'' solutions \(respectively
all circular solutions\) of the collection of the Kepler problems for
planets with angular frequencies {\rm(\ref {chast})}, such that the
following condition holds. For any values $\mu,\nu,\widetilde
T,\widetilde\alpha\in\bbR$ of the form
 $$
 0<\left(\frac{\nu}{\nu_0}\right)^3\le\mu\le\mu_0, \qquad |\widetilde T-T|+|\widetilde\alpha|\le D\mu,
 $$
there exists no $(\widetilde T,\widetilde\alpha)$-periodic solution of
the $N+1$ body problem under consideration whose phase orbit has a
nonempty intersection with the direct product $U$ of $U_0$ and the
phase space of the satellites.
 \end {The}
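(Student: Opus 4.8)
The plan is to convert the non-existence assertion into a statement about the \emph{secular} (averaged) dynamics of the planetary subsystem at the first-order mean-motion resonance (\ref{degen:3bod'}), and then to show that for the indicated masses this averaged dynamics carries a net drift of the planets' eccentricity vectors that no near-circular orbit can avoid. First I would average over the fast satellite angles; since the hypotheses force $\nu$ to be much smaller than $\mu^{1/3}$ (we assume $\nu^3\le\nu_0^3\mu$), the back-reaction of the satellites on the planetary secular dynamics is negligible, and the problem reduces, via the averaging theorem \ref{th:mult} and the generating function $\Psi$ of the succession map $g_{H-\omega_1I}^T$ (definition \ref{def:gen:func}), to the planetary $n$-body problem regarded as a perturbation of $n$ independent Kepler problems. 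Because (\ref{degen:3bod'}) is a \emph{first-order} resonance $\omega_i/\omega_{i'}=k/(k+1)$, the resonant part of the averaged planetary Hamiltonian is, to leading order, \emph{linear} in the eccentricity vectors $\mathbf e_l$, i.e.\ a direct forcing $\sum_{l}\mathbf b_{il}$ with $|\mathbf b_{il}|\propto|\kappa_{il}|\,c_{\kappa_{il}}m_l$, the coefficients being those in (\ref{cs}), (\ref{sii'}), (\ref{eq:cs0}); and because the resonance forces $\alpha=0$, there is no perihelion precession over the period $T$ to average this forcing away (which is exactly how the nondegeneracy condition (\ref{alpha}) enters in theorem \ref{th1}).

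Next I would use the generating-function/Poincar\'e-map description: a $(\widetilde T,\widetilde\alpha)$-periodic solution whose orbit meets $U$ corresponds to a fixed point of the succession map in $U_0$, and hence to an approximate equilibrium of the averaged planetary system there. The displacement of the eccentricity vector $\mathbf e_i$ produced by the map over the period is, to leading order, $\widetilde T\,J(\sum_l\mathbf b_{il})$ with $J$ the symplectic rotation; a fixed point therefore requires the total forcing vector $\sum_l\mathbf b_{il}$ to vanish at the corresponding phase configuration. Thus the whole matter comes down to showing that, for the masses in question, this planar forcing vector is bounded away from zero on $U_0$; the remainder terms (higher order in the eccentricities, the averaged satellite contribution, and the averaging error) are then, for $\mu,\nu$ small and $|\widetilde T-T|+|\widetilde\alpha|\le D\mu$, too small to restore a zero, which yields the non-existence.

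The two genericity cases reflect \emph{where} the forcing must be shown nonzero. For the stronger statement over the set $\cM$ I would impose the dominant-term inequality $|\kappa_{ii'}|\,c_{\kappa_{ii'}}m_{i'}>\sum_{l\ne i,i'}|\kappa_{il}|\,c_{\kappa_{il}}m_l$: one summand then exceeds the sum of the lengths of all the others, so $\sum_l\mathbf b_{il}\ne0$ for \emph{every} phase configuration, which lets $U_0$ cover all circular solutions; the set of masses satisfying this is a nonempty open cone (nonempty because for $n=2$ the right-hand sum is empty, so the inequality is automatic). For the weaker statement over $\cM^{\rm sym}$ it suffices to rule out vanishing at the \emph{symmetric} phases, where the vectors $\mathbf b_{il}$ become collinear; the vanishing of the resulting planar vector amounts to linear conditions on $(m_l)$ of codimension $\ge2$, and running over the finitely many admissible groupings of the resonant contributions produces at most $2^{n-2}$ such subspaces, whose (scaling-invariant) complement is the open dense cone $\cM^{\rm sym}$.

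I expect the principal obstacle to be the \emph{explicit and uniform control of the secular Hamiltonian}: one must compute the leading resonant forcing coefficients $c_{\kappa_{il}}$ (Laplace-type coefficients) and, more delicately, prove that every neglected contribution --- the quadratic-and-higher eccentricity terms, the already-averaged month-scale motion of the satellites, and the averaging remainder of theorem \ref{th:mult} --- is uniformly dominated by the linear forcing on $U_0$, simultaneously in all three small parameters $\mu,\nu,\omega$ and uniformly in $(\widetilde T,\widetilde\alpha)$. The accompanying bookkeeping --- verifying that the exceptional mass set is genuinely a union of $\le 2^{n-2}$ codimension-$2$ subspaces and that $\cM$, $\cM^{\rm sym}$ have the stated openness, density, and cone structure --- is the remaining technical core.
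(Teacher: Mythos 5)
Your proposal is correct and follows essentially the same route as the paper: the paper derives the theorem from the general non-existence criterion (theorem \ref{th:nondeg}) by computing, in the normalized Kepler coordinates of lemma \ref{lem3:2:}, the differential of the averaged planetary perturbation $\langle R_0\o\rangle$ on the circular torus, which is exactly your linear-in-eccentricity resonant forcing with coefficients $\kappa_{ll'}c_{\kappa_{ll'}}m_{l'}$ (the functions $f_l$ of definition \ref{def:m:adm}), and the unclosing condition is precisely the non-vanishing of this forcing, with the satellites entering only through the parameter relation $\varepsilon=\omega^{1/3}\mu^{2/3}\nu$ that yields $(\nu/\nu_0)^3\le\mu$. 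The genericity bookkeeping for $\cM$ and $\cM^{\rm sym}$ (dominant-term inequality, codimension-$\ge2$ subspaces at the $2^n$ symmetric phases) also matches remark \ref{rem:sym}.
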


In particular, the region $U$ does not contain the phase orbit of any
{\it symmetric $T$-periodic solution} (respectively $T$-periodic
solution).

Consider the planetary system with two planets, a partial case of the
three-body problem. In this case, the minimal positive period
$T_{\min}$ equals $\frac{2\pi}{|\fomega_2-\fomega_1|}$, thus the
condition (\ref {degen:3bod'}) means that the corresponding angle
$\alpha_{\min}=\frac{2\pi\fomega_1}{\fomega_2-\fomega_1}-2\pi k$
vanishes. We also observe that, in this case, the region $U$ in
theorem \ref {th:degen:sym}\ contains the whole two-dimensional torus
$\Lambda\o$ and $\cM^{\rm sym}=\cM=\bbR^2_{>0}$ (i.e.\ ``almost any''
means ``any''). Thus, theorems \ref {th1} and \ref {th:degen:sym} for
$N=n=2$ imply the following.

\begin {Cor} \label {cor:3bodies}
Consider the three-body problem of the type of planetary system with
two planets. Fix angular frequencies of planets $\fomega_1\ne0$,
$\fomega_2\ne0$, $|\fomega_1|\ne|\fomega_2|$, and consider the
two-dimensional torus $\Lambda\o$ corresponding to the
circular motions of planets with frequencies $\fomega_1,\fomega_2$. Put
$T:=\frac{2\pi}{|\fomega_2-\fomega_1|}$,
$\alpha=2\pi\frac{\fomega_1}{\fomega_2-\fomega_1}+2\pi k\in(-\pi,\pi]$
for a suitable $k\in\bbZ$. In dependence on the ratio of these
frequencies, one of the following statements holds.

{\rm($\exists$)} Suppose that the angular frequencies
$\fomega_1,\fomega_2$ does not satisfy the special resonance condition
{\rm(\ref {degen:3bod'})}. Then $\alpha\ne0$ and there exists a
number $\mu_0=\mu_0(m_1,m_2,\fomega_1,\fomega_2)>0$ such that, for any
values $\mu,\widetilde\fomega_1,\widetilde\fomega_2$,
$|\mu|+|\widetilde\fomega_1-\fomega_1|+|\widetilde\fomega_2-\fomega_2|\le\mu_0$,
there exists a two-dimensional torus
$\widetilde\Lambda=\widetilde\Lambda(\mu,\widetilde\fomega_1,\widetilde\fomega_2)$
that smoothly depends on the triple $(\mu,\widetilde\fomega_1,\widetilde\fomega_2)$,
coincides with the torus $\Lambda\o$ if
$(\mu,\widetilde\fomega_1,\widetilde\fomega_2)=(0,\fomega_1,\fomega_2)$
and has the following property. If $0<\mu\le\mu_0$ then the torus
$\widetilde\Lambda(\mu,\widetilde\fomega_1,\widetilde\fomega_2)$
is the phase
orbit of a symmetric $(\widetilde T,\widetilde\alpha)$-periodic
solution of the problem under consideration with parameters
$\widetilde T=\frac{2\pi}{|\widetilde\fomega_2-\widetilde\fomega_1|}$,
$\widetilde\alpha=2\pi\frac{\widetilde\fomega_1}{\widetilde\fomega_2-\widetilde\fomega_1}+2\pi\widetilde k$, for a suitable $\widetilde k\in\bbZ$.

{\rm($\nexists$)} Suppose that the angular frequencies
$\fomega_1,\fomega_2$ are in a special resonance {\rm(\ref
{degen:3bod'})}. Then $\alpha=0$ and, for any numbers $T,D>0$ with
$\frac{\fomega_2-\fomega_1}{2\pi}T\in\bbZ$, there exist a number
$\mu_0=\mu_0(m_1,m_2,\fomega_1,\fomega_2,T,D)>0$ and a neighbourhood
$U=U_{m_1,m_2,\fomega_1,\fomega_2,T,D}$ of the torus $\Lambda\o$ in the
phase space such that, for any parameter value $\mu\in(0,\mu_0]$ of
the three-body problem under consideration, $U$ does not contain any
$(\widetilde T,\widetilde\alpha)$-periodic orbit with parameters
$\widetilde T$, $\widetilde\alpha$ of the form
 $$
|\widetilde T-T|+|\widetilde\alpha|\le D\mu.
 $$
 \end {Cor}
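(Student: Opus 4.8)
The plan is to read Corollary~\ref{cor:3bodies} as the $N=n=2$ specialization (two planets, no satellites) of Theorems~\ref{th1}, \ref{th:ust} and~\ref{th:degen:sym}. I begin by recording the elementary link between the resonance condition and the value of $\alpha$: with $\omega_2>\omega_1$ say, $T=\frac{2\pi}{\omega_2-\omega_1}$ and $\alpha=\frac{2\pi\omega_1}{\omega_2-\omega_1}+2\pi k$, so $\alpha=0$ for the appropriate $k$ exactly when $\frac{\omega_1}{\omega_2-\omega_1}\in\bbZ$, which is precisely the special resonance~(\ref{degen:3bod'}). Since the theorems require the order $\omega$ of the planetary frequencies to satisfy $0<\omega\le\omega_0$, I first apply the time rescaling of Remark~\ref{rem:relation} with $a=c=1$ and $b$ large, under which $\omega_i\mapsto\omega_i/b^3$ and the distances are multiplied by $b^2$, while the normalizations $\mu_0=1$ and $g=1/\mu$ and the Kepler relation $\omega^2R^3=g$ are preserved. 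This rescaling leaves the ratio $\omega_1/\omega_2$ and the reduced angle $\alpha=\omega_1T+2\pi k$ invariant, so it neither changes the resonance dichotomy nor the value of $\alpha$; a periodic orbit and its carrying torus transfer back to the original units by the inverse scaling. Hence it suffices to prove both alternatives after arranging $\omega\le\omega_0$.

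For the alternative~($\exists$), not being in special resonance gives $\alpha\neq0$. Because there are no satellites, the indices $j$ are absent and the delicate nondegeneracy condition~(\ref{eq:fine}) collapses to its first clause $\alpha\neq0$ alone. Theorem~\ref{th1} therefore produces, for every $0<\mu\le\mu_0$, exactly $2^{N-2}=2^{0}=1$ symmetric $(T,\alpha)$-periodic solution lying $O(\omega^2)$-close to the generating symmetric solution on $\Lambda\o$. By Theorem~\ref{th:ust}(A) the phase orbit of this solution is contained in the smooth torus $\widetilde\Lambda$, which is $\omega^2$-close to $\Lambda\o$, depends smoothly on $(T,\alpha)$, and reduces to the generating torus when $\mu=0$. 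I then observe that $(\tilde\omega_1,\tilde\omega_2)\mapsto(\tilde T,\tilde\alpha)$, with $\tilde T=\frac{2\pi}{|\tilde\omega_2-\tilde\omega_1|}$ and $\tilde\alpha=\frac{2\pi\tilde\omega_1}{\tilde\omega_2-\tilde\omega_1}+2\pi\tilde k$, is smooth near $(\omega_1,\omega_2)$ and, by continuity, keeps $\tilde\alpha\neq0$; setting $\Lambda_{\mu,\tilde\omega_1,\tilde\omega_2}:=\widetilde\Lambda$ and undoing the scaling of the first step yields the asserted two-dimensional family, smooth in $(\mu,\tilde\omega_1,\tilde\omega_2)$ and equal to $\Lambda\o$ at the base point $(0,\omega_1,\omega_2)$.

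For the alternative~($\nexists$), the special resonance~(\ref{degen:3bod'}) holds with $n=2\ge2$, forcing $\alpha=0$, so Theorem~\ref{th:degen:sym} applies directly. As remarked in the text just before the corollary, when $N=n=2$ one has $\cM^{\rm sym}=\cM=\bbR^2_{>0}$ and the region $U$ fills out to contain the entire torus $\Lambda\o$; thus the theorem holds for \emph{every} pair of planet masses. With $\nu$ playing no role in the satellite-free problem, its hypothesis $0<(\nu/\nu_0)^3\le\mu\le\mu_0$ degenerates to $0<\mu\le\mu_0$, and the theorem supplies the number $\mu_0$ and the neighbourhood $U$ of $\Lambda\o$ containing no $(\tilde T,\tilde\alpha)$-periodic orbit with $|\tilde T-T|+|\tilde\alpha|\le D\mu$. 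This is exactly statement~($\nexists$); the side condition $\frac{\omega_2-\omega_1}{2\pi}T\in\bbZ$ merely expresses that $T$ is a multiple of the minimal period, i.e.\ the maximally resonant period~(\ref{maxrez}).

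I expect the principal difficulty to be the consistent bookkeeping of parameters across the scaling reduction rather than any fresh dynamical content. The theorems are phrased with $\omega$ as the basic small parameter and $(T,\alpha)$ as free data, whereas the corollary fixes $(\omega_1,\omega_2)$ and varies $(\mu,\tilde\omega_1,\tilde\omega_2)$; the delicate point is to verify that the torus $\widetilde\Lambda$ of Theorem~\ref{th:ust} depends smoothly on \emph{all} of these after rescaling and genuinely limits to $\Lambda\o$ as $\mu\to0$, and that a single $\mu_0$ (and, for~($\nexists$), a single pair $\mu_0,D$) works uniformly on the relevant frequency neighbourhood. Once the scale-invariance of $\alpha$ and of the resonance ratio is secured, each of the two cases is a direct quotation of the corresponding general theorem.
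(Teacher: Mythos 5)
Your derivation of part ($\exists$) from theorems \ref{th1} and \ref{th:ust}(A) --- including the preliminary rescaling of remark \ref{rem:relation} to bring the frequencies into the admissible range $0<\omega\le\omega_0$, and the observation that the delicate condition (\ref{eq:fine}) collapses to $\alpha\ne0$ when there are no satellites --- is exactly the route the paper takes, and is fine. For part ($\nexists$), however, your argument and the paper's diverge. You invoke theorem \ref{th:degen:sym} together with the pre-corollary observation that for $N=n=2$ one has $\cM^{\rm sym}=\cM=\bbR^2_{>0}$ and that the region $U$ contains all of $\Lambda\o$; but that observation is not free of charge: it amounts to the nonvanishing of the functions $f_l$ of (\ref{eq:fl}) at \emph{every} point of the torus, which for $n=2$ comes down to the positivity of the coefficients $c_\kappa$ in (\ref{cs}), and moreover theorem \ref{th:degen:sym} is literally stated under the hypotheses of theorem \ref{th1}, whose normalization (\ref{nov:**}) presupposes at least one satellite, so the satellite-free case strictly requires the extension recorded in proposition \ref{pro:degen:planets}. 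The paper instead proves ($\nexists$) directly from theorem \ref{th:nondeg} in \Sec\ref{par3:1:3'}: it computes the differential of the averaged perturbation $\langle R_0\o\rangle$ along $\Lambda\o$ explicitly (formula (\ref{Husr})), distinguishes the cases $\kappa\in\bbQ\setminus\bbZ$, where this differential vanishes identically, from $\kappa\in\bbZ$, where it is everywhere nonzero precisely because $c_\kappa\ne0$ and $c_{-\kappa-1}\ne0$, and only then applies the nondegeneracy-necessity theorem. Your version is correct modulo this deferral --- everything you cite is asserted somewhere in the paper --- but the explicit averaging computation is where the actual content of ($\nexists$) lives, and as written your proof assumes the very facts ($\cM=\bbR^2_{>0}$, $U\supset\Lambda\o$) that this computation is there to establish.
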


The figure shows (for a fixed $\fomega_1\ne0$) regions in the plane
$\bbR^2$ consisting of pairs $(\kappa,\mu)\in\bbR\times\bbR_{>0}$,
$\kappa:=\frac{\fomega_1}{\fomega_2-\fomega_1}$,
$0<\mu\le\mu_0(m_1,m_2,\fomega_1,\fomega_2)$, such that there exists
(respectively does not exist) a relatively-periodic solution of the
three-body problem with parameters
$T=\frac{2\pi}{|\fomega_2-\fomega_1|}$, $\alpha=2\pi\kappa+2\pi k$, for
a suitable $k\in\bbZ$.

\begin{figure}
\unitlength = 3.8mm
\begin{center}
\begin{picture}(28,4.5)(-14,-.5)

\put(-14.5,0){\vector(1,0){29}} \put(14.5,-.8){$\kappa$}
\put(0,0){\vector(0,1){3.5}} \put(.3,3.5){$\mu$}

\put(-16,0){ \put(1.5,.5){$\dots$} }

\put(-12,0){
\qbezier[200](0,0)(.4,0.1)(.7,.9) \qbezier[200](0,0)(-.4,0.1)(-.7,.9)
\qbezier[200](.7,.9)(.8,1.2)(.7,1.5)
\qbezier[200](-.7,.9)(-.8,1.2)(-.7,1.5)
\qbezier[200](.7,1.5)(0,2.5)(-.7,1.5) \put(-.3,.7){$\nexists$}
\put(-.8,-.9){\small$-3$}
\qbezier[200](0,0)(.5,0)(1,.8) \qbezier[200](4,0)(3.5,0)(3,.8)
\qbezier[200](1,.8)(2,2.2)(3,.8) \put(1.7,.3){$\exists$} }

\put(-8,0){
\qbezier[200](0,0)(.4,0.1)(.7,.9) \qbezier[200](0,0)(-.4,0.1)(-.7,.9)
\qbezier[200](.7,.9)(.8,1.2)(.7,1.5)
\qbezier[200](-.7,.9)(-.8,1.2)(-.7,1.5)
\qbezier[200](.7,1.5)(0,2.5)(-.7,1.5) \put(-.3,.7){$\nexists$}
\put(-.8,-.9){\small$-2$}
\qbezier[200](0,0)(.5,0)(1,.8) \qbezier[200](4,0)(3.5,0)(3,.8)
\qbezier[200](1,.8)(2,2.2)(3,.8) \put(1.7,.3){$\exists$} }

\put(-4,0){ \put(-.8,-.9){\small$-1$}
\qbezier[200](0,0)(.5,0)(1,.8) \qbezier[200](4,0)(3.5,0)(3,.8)
\qbezier[200](1,.8)(2,2.2)(3,.8) \put(1.7,.3){$\exists$} }

\put(0,0){ \put(-.2,-.9){\small$0$}
\qbezier[200](0,0)(.5,0)(1,.8) \qbezier[200](4,0)(3.5,0)(3,.8)
\qbezier[200](1,.8)(2,2.2)(3,.8) \put(1.7,.3){$\exists$} }

\put(4,0){
\qbezier[200](0,0)(.4,0.1)(.7,.9) \qbezier[200](0,0)(-.4,0.1)(-.7,.9)
\qbezier[200](.7,.9)(.8,1.2)(.7,1.5)
\qbezier[200](-.7,.9)(-.8,1.2)(-.7,1.5)
\qbezier[200](.7,1.5)(0,2.5)(-.7,1.5) \put(-.3,.7){$\nexists$}
\put(-.2,-.9){\small$1$}
\qbezier[200](0,0)(.5,0)(1,.8) \qbezier[200](4,0)(3.5,0)(3,.8)
\qbezier[200](1,.8)(2,2.2)(3,.8) \put(1.7,.3){$\exists$} }

\put(8,0){
\qbezier[200](0,0)(.4,0.1)(.7,.9) \qbezier[200](0,0)(-.4,0.1)(-.7,.9)
\qbezier[200](.7,.9)(.8,1.2)(.7,1.5)
\qbezier[200](-.7,.9)(-.8,1.2)(-.7,1.5)
\qbezier[200](.7,1.5)(0,2.5)(-.7,1.5) \put(-.3,.7){$\nexists$}
\put(-.2,-.9){\small$2$}
\qbezier[200](0,0)(.5,0)(1,.8) \qbezier[200](4,0)(3.5,0)(3,.8)
\qbezier[200](1,.8)(2,2.2)(3,.8) \put(1.7,.3){$\exists$} }

\put(12,0){
\qbezier[200](0,0)(.4,0.1)(.7,.9) \qbezier[200](0,0)(-.4,0.1)(-.7,.9)
\qbezier[200](.7,.9)(.8,1.2)(.7,1.5)
\qbezier[200](-.7,.9)(-.8,1.2)(-.7,1.5)
\qbezier[200](.7,1.5)(0,2.5)(-.7,1.5) \put(-.3,.7){$\nexists$}
\put(-.2,-.9){\small$3$} \put(1.3,.5){$\dots$} }

\end{picture}
\end{center}

\vskip 0.5truecm
\begin{center}
\begin{tabular}{c} \small
 Fig.~1. A set of pairs $(\kappa=\frac{\fomega_1}{\fomega_2-\fomega_1},\mu)$ such that there exists ($\exists$) or \\ does not exist ($\nexists$) a relatively-periodic solution of the 3-body problem
\end{tabular}
\end{center}
\end{figure}

Due to corollary \ref {cor:3bodies}, for the planetary system with
two planets, the condition (\ref {degen:3bod'}) is false if and only
if there exists
a $(T,\alpha)$-periodic solution close to a ``circular''
$(T,\alpha)$-periodic motion. These $(T,\alpha)$-periodic solutions
were discovered already by H.\ Poincar\'e \cite {1} who called them
{\it solutions of the first kind}. In the degenerate case (\ref
{degen:3bod'}), Poincar\'e discovered periodic solutions
corresponding to elliptic motions of the planets, {\it solutions of
the second kind}.

\subsection{Method of finding relatively-periodic solutions of the $N+1$ body problem}

Let us describe our idea of constructing $(T,\alpha)$-periodic solutions that appear in Theorems \ref {th1} and \ref {th:ust}.
Consider the model system (\ref{eq:model:H:omega}), (\ref {eq:model:H}), (\ref{eq:model:omega}) and the corresponding torus $\Lambda\o\subset (\bbR^2)^{2N}$ formed by trajectories of ``generating'' solutions with angular frequencies (\ref {chast}) of the form (\ref {poryadki:chastot'}), (\ref {poryadki:chastot''}).
Consider also the unperturbed and perturbed systems, cf.\ (\ref {eq:unpert0}), (\ref {eq:unpert1}) and (\ref {eq:pert:planets}).
We construct solutions in three stages (\Sec \ref {par3:1:5}, \ref {sec:pert}, \ref {par3:1:4}). On the first stage (\Sec \ref {par3:1:5}), by using our construction (via the averaging method on a submanifold \cite {9,29}) of periodic solutions of the Hill problem \cite{K:vest13}, we construct an $N$-dimensional torus $\Lambda\subset (\bbR^2)^{2N}$, which is $O(\somega^2)$--close to the torus $\Lambda\o$, is formed by phase trajectories of the unperturbed system with angular frequencies (\ref {chast}) and depends smoothly on the small parameter $\somega\in[-\somega_0,\somega_0]$ and the tuple of real numbers $\Omega_{ij}$ ($1\le i\le n$, $0\le j\le n_i$). 
On the second stage (\Sec \ref {sec:pert} and \S\ref {subsec:ideas}), we fix a value $\somega\in(0,\somega_0]$ and a ``relatively resonance'' tuple of frequencies of the form (\ref {maxrez}).
Then, $|\mu|,|\nu|,|\rho|,|\varepsilon|\ll1$, we construct a torus $\widetilde\Lambda\subset (\bbR^2)^{2N}$, which depends smoothly on the parameters $T,\alpha,\mu,\nu,\rho,\varepsilon$ and contains phase orbits of all $(T,\alpha)$-periodic solutions of the perturbed problem near the torus $\Lambda$, moreover $\widetilde\Lambda=\Lambda$ if $\mu=\nu=\rho=\varepsilon=0$. We prove that exactly $2^{N-2}$ of these solutions are symmetric, depend smoothly on the indicated parameters, and if $\mu=\nu=\rho=\varepsilon=0$ then they coincide with the symmetric solutions of the unperturbed system whose phase orbits are contained in the torus $\Lambda$.
On the third stage (\Sec \ref {par3:1:4}), we show (in Lemma \ref {lem3:1}) that the transformation (\ref {eq:zamena}) brings the $N+1$ body problem to a $N+3$-parameter subfamily of the $N+5$-parameter family of perturbed systems (\ref {eq:pert:planets}), where the parameters in the subfamily satisfy the relation (\ref {eq:rho:varepsilon}).

It would be interesting to prove analogues of Theorems \ref {th1}--\ref {th:degen:sym} 
for the $N+1$ body problems on the sphere and the Lobachevskiy plane. We beleave that our methods should work in this case, due to periodicity of solutions of the Kepler problems on these surfaces (cf.\ the work \cite{zkf} and references therein).

\section {The averaging method for a class of systems with slow and fast variables} \label {sec:quick:slow}

In \Sec\Sec\ref {par3:1:5}--\ref {par:proofs},
we will derive theorems \ref {th1}--\ref {th:degen:sym} from the next theorems \ref
{th1'}--\ref {th:nondeg} on periodic solutions of dynamical systems
having the following special form.

Let $p:M\to M_0$ be a surjective submersion (e.g., a locally trivial fibre bundle) of smooth
manifolds. Suppose, we are given a symplectic structure $\widehat\oomega_0$ on $M_0$ and a closed 2-form $\oomega_1$ on $M$ such that 
 $$
T_xM=\left(\Ker(dp)|_x\right)\oplus\left(\Ker\oomega_1|_x\right)
 $$ 
at every point $x\in M$. Thus, the 2-form $\oomega_1$ defines a symplectic structure on each fibre, moreover its field of kernels is transvercal to the fibre at every point of $M$ and defines a symplectic flat connection of the fibre bundle $p$. The question of the existence of such a connection on a given symplectic fibre bundle (whose base is not necessarily a symplectic manifold) was studied in \cite {GLSW} (cf.\ also \cite {GLS}). 
The pair $(
p^*\widehat
\oomega_0,\oomega_1)$ is called a {\it splitted symplectic strucure} on the fibred manifold $(M,M_0,p)$.

Suppose that functions $\widehat H_0\in C^\infty(M_0)$, $H_1\in C^\infty(M)$ and a constant $\lambda\in\bbR$ are given. Put 
$$
\oomega_0:=p^*\widehat\oomega_0, \qquad 
H_0:=\widehat H_0\circ p, \qquad 
H:=H_0+\lambda H_1.
$$
A vector field $v=v_{H,H_1}$ on $M$ will be called {\em $\lambda$-Hamiltonian} if 
 $$
 \left(
\oomega_0(\cdot,v)-dH\right)|_{\Ker\oomega_1}=0, \qquad
 \left(\oomega_1(\cdot,v)-dH_1\right)|_{\Ker dp}=0.
 $$
In this case, the dynamical system $\dot x(t)=v(x(t))$ on $M$ will be called {\em $\lambda$-Hamiltonian} and denoted by
 \begin{equation} \label {eq:lam:Ham}
 (M,M_0,p;
\oomega_0,\oomega_1;H,H_1)^\lambda,
 \end{equation}
moreover the function $H$ will be called {\em the Hamilton function}, and $H_1$ will be called {\em the $\lambda$-Hamilton function} 
of the system (\ref {eq:lam:Ham}). If $\lambda\ne0$
then the system (\ref {eq:lam:Ham}) is equivalent to the Hamiltonian
system $(M,\oomega:=
\oomega_0+\lambda\oomega_1,H)$:
 $$
 (M,M_0,p;
\oomega_0,\oomega_1;H,H_1)^\lambda \cong (M,\oomega:=
\oomega_0+\lambda\oomega_1,H).
 $$
Here the symbol $\cong$ denotes the equivalence of
($\lambda$-)Hamiltonian systems, i.e.\ the coincidence of the
corresponding ($\lambda$-)Hamiltonian vector fields. If $\lambda=0$
then the system (\ref {eq:lam:Ham}) is a {\em semidirect product} of Hamiltonian systems
(such systems are studied by Yu.\,M.\ Vorobiev~\cite {DV}; they
include e.g.\ the restricted three-body problem, i.e.\ the three-body problem with $N=2$, $n=1$, $\nu=0$).

Denote by $g^t_{H,H_1}$ the flow of the $\lambda$-Hamiltonian vector
field $v_{H,H_1}$. Similarly to the case of Hamiltonian systems, the
flow of the field $v_{H,H_1}$ always (even for $\lambda=0$) preserves the $2$-form $\oomega$
and the Hamilton function $H$.

\begin{nnExa} 
Suppose that $M=M_0\times M_1$ is a direct product, moreover (similarly to $\oomega_0$ and $H_0$) a 2-form $\oomega_1$ and a function $H_1$ are ``lifted'' from $M_1$ (i.e., they have the form $\oomega_1=p_1^*\widehat\oomega_1$ and 
$H_1=\widehat H_1\circ p_1$, for some 2-form $\widehat\oomega_1$ on
$M_1$ and a function $\widehat H_1\in C^\infty(M_1)$, where $p_1:M\to M_1$ is the projection).
Then the $\lambda$-Hamiltonian system (\ref {eq:lam:Ham}) for any $\lambda\in\bbR$ is equivalent to the Hamiltonian system $(M,
\oomega_0+\oomega_1,\widehat H_0\circ p+H_1)$, i.e.\ to the {\em direct product} of the Hamiltonian systems
$(M_0,\widehat\oomega_0,\widehat H_0)$ and
$(M_1,\widehat\oomega_1,\widehat H_1)$:
 \begin{equation} \label {eq:direct}
 (M,M_0,p;
\oomega_0,\oomega_1;H,H_1)^\lambda
 \cong(M,
\oomega_0+\oomega_1,
H_0
+H_1).
 \end{equation}
\end{nnExa}

Let us describe a class of dynamical systems {\em with slow and fast
variables}. Consider a two-parameter family of $\varepsilon$-Hamiltonian systems on $M$ with a splitted symplectic structure $(
\oomega_0,\oomega_1)$, the
Hamiltonian $H=\somega
H_0
+\varepsilon H_1$ and $\varepsilon$-Hamiltonian $H_1$
where $|\varepsilon|,|\somega|\ll1$.
Here the 2-forms $\widehat\oomega_0,\oomega_1$ and the functions $\widehat H_0$ and $H_1$
depend, in general, on the small
parameters $\varepsilon,\somega$ and possibly on some other parameters
of the system, moreover some relations between parameters may be
posed. The local coordinates of a point $x_0\in M_0$ are ``slow
variables'', while the local coordinates of a point ``on a fibre''
are ``fast variables'' of the system.


\begin{nnExa} The following mechanical problems can be transformed (by changing phase variables) to systems with slow and fast variables.

(i) The problem on slow motions of a charged particle in a
magnetic field on a symplectic Riemannian manifold $(M_0,\widehat\oomega_0,g)$.
Let a magnetic field be given by a closed 2-form $
\widehat\oomega_0$ on $M_0$, and an electric field be given by a smooth function $U(\q)$ on $M_0$ (called the electric potential). Motions of a charged particle in an electro-magnetic field are described by the Hamiltonian system 
$$
(M=T^*M_0,\ \oomega=d\p\wedge d\q+\oomega_0,\ H=T(\q,\p)+U(\q))
$$
on $M=T^*M_0$, where $\oomega_0:=p^*\widehat\oomega_0$, $p:T^M\to M$ is the projection.
Here $\q=(q^i)_{i=1}^n$ are local coordinates on $M_0$, $\p=(p_j)_{j=1}^n$ are conjugated momenta, $T(\q,\p)=\frac12 g^{ij}(\q)dp_i dp_j$ is the kinetic energy.
Suppose that the magnetic field $\widehat\oomega_0$ is nondegenerate, and the local coordinates $\q$ are canonical for it, i.e.\ $\widehat\oomega_0=\sum_{j=1}^{n/2}dq^{2j-1}\wedge dq^{2j}$.
Let us change the coordinates $\q\to\q+J\p=:x_0$ (called the {\em guiding-centre transformation}, cf.\ \cite [\S3 and 6]{Neishtadt} or \cite[\S3]{kud:pod}, where $J$ denotes the matrix of $\oomega$) and rescale the momenta $\p\to\p/\somega=:x_1$. Then the symplectic structure and the Hamilton function take the form 
$$
\oomega = \oomega_0 - \varepsilon\oomega_1, \qquad 
H = U(x_0-\somega Jx_1) + \varepsilon T(x_0-\somega Jx_1,x_1)
$$
where $\omega_1=\sum_{j=1}^{n/2}dx_1^{2j-1}\wedge dx_1^{2j}$ and the small parameters $\omega,\varepsilon$ are related by the condition $\varepsilon=\somega^2$. 
If $U\equiv0$, i.e.\ there is no electric field, 
then the obtained system is a system with slow and fast variables. Here the local coordinates $x_0$ on $M_0$ are ``slow'', while the local coordinates $x_1$ on fibres are ``fast''.

(ii) The planar $N+1$ body problem of the type of a planetary system with satellites reduces (due to Lemma \ref {lem3:1}) to the ``perturbed'' system (\ref {eq:pert:planets}), which is a system with slow and fast variables. In this system, $M=M_0\times M_1$ is the direct product of the phase spaces of ``scaled planets'' and ``satellites'', 
while the small parameters are related by (\ref {eq:rho:varepsilon}).
See Example \ref {exa:slow:fast:planets} for details.
\end{nnExa}

From now on, we assume that $M=M_0\times M_1$ is the direct product
and $\oomega_1=p_1^*\widehat\oomega_1$. 

Suppose that each symplectic manifold $(M_i,\widehat\oomega_i)$ is
equipped with the Hamiltonian action of a circle
$SO(2)=S^1=\bbR/2\pi\bbZ$ with the Hamiltonian function $I_i$,
$i=0,1$. The system (\ref {eq:lam:Ham}) will be called {\em
$S^1$-symmetric} (or {\em $SO(2)$-symmetric}) if the functions
$H,H_1$ are invariant under the diagonal action of the circle on $M$
(i.e.\ invariant under the flow of the $\lambda$-Hamiltonian field
$v_{I,I_1}$ on $M$ where $I=I_0+\lambda I_1$). All solutions of this
system that differ by shifts along the commuting vector fields
$v_{H,H_1}$ and $v_{I,I_1}$ will be regarded as a single solution,
and the union of their phase trajectories will be called {\it the
phase orbit} of this solution. Let $T,\alpha\in\bbR$, $T\ne0$. A
solution $\gamma(t)$ of a $S^1$-symmetric system will be called {\em
$(T,\alpha)$-periodic} if it is defined on the whole time-axis, and
$\gamma(t)=g^T_{H,H_1}g^{-\alpha}_{I,I_1}(\gamma(t))$ for some (and,
hence, for any) $t\in\bbR$.

\begin{scDef} \label {def:reversible}
A $S^1$-symmetric system (\ref {eq:lam:Ham}) will be called {\em
reversible} (or {\em $O(2)$-symmetric}) if each $M_i$ is equipped
with an anti-canonical involution $\J_i:M_i\to M_i$ preserving the
function $I_i$ (i.e.\ $\J_i^*\widehat\oomega_i=-\widehat\oomega_i$ and
$I_i\circ \J_i=I_i$), $i=0,1$, moreover the functions $H,H_1$ are
invariant under the (component-wise anti-canonical) involution
$\J:=\J_0\times \J_1:M\to M$. A solution $\gamma(t)$ of the reversible
system will be called {\em symmetric} if it is defined on a
time-interval $(t_0-\varepsilon,t_0+\varepsilon)\subset\bbR$ and
 \begin{equation} \label {eq:def:sym}
 \gamma(t_0)=g^{\varphi_0}_{I,I_1}\J g^{-\varphi_0}_{I,I_1}(\gamma(t_0)) \quad \mbox{for some $\varphi_0\in\bbR\mod2\pi$} 
 \end{equation}
(and, hence, $\gamma(2t_0-t)=g^{\varphi_0}_{I,I_1}\J g^{-\varphi_0}_{I,I_1}(\gamma(t))$ for any $t\in(t_0-\varepsilon,t_0+\varepsilon)$).
\end{scDef}

Let us describe the {\it model system} on $M=M_0\times M_1$:
 \begin{equation} \label {eq:model}
 (M,M_0,p;\oomega_0,\oomega_1;\somega H_0,H_1)^0 \cong (M,\oomega_0+\oomega_1,\somega H_0+H_1),
 \end{equation}
$\somega\in\bbR$, where $H_1=\widehat H_1\circ p_1$ for some function $\widehat H_1\in C^\infty(M_1)$, cf.\ (\ref {eq:lam:Ham}) and (\ref {eq:direct}). 
We will assume that each of the Hamiltonian systems
$(M_0,\widehat\oomega_0,\widehat H_0)$ and
$(M_1,\widehat\oomega_1,\widehat H_1)$ in the system (\ref
{eq:model}) is the direct product of $S^1$-symmetric Hamiltonian
systems:
$$
 (M_0,\widehat\oomega_0,\widehat H_0)=\prod_{i=1}^n(M_{i0},\widehat\oomega_{i0},H_{i0}),
 \qquad
 (M_1,\widehat\oomega_1,\widehat H_1)=\prod_{i=1}^n\prod_{j=1}^{n_i}
 (M_{ij},\widehat\oomega_{ij},H_{ij}).
$$
Moreover each factor $M_{ij}=S^1\times(a_{ij},b_{ij})\times\bbR^2$ is equipped with
coordinates $\varphi_{ij}\mod2\pi,I_{ij},q_{ij},p_{ij}$ such that
$\widehat\oomega_{ij}=dI_{ij}\wedge d\varphi_{ij}+dp_{ij}\wedge
dq_{ij}$, the action of the circle on $(M_{ij},\widehat\oomega_{ij})$
is given by the Hamiltonian $I_{ij}$, and the involution $\J$ acts
component-wise in the form
$(\varphi_{ij},I_{ij},q_{ij},p_{ij})\mapsto(-\varphi_{ij},I_{ij},q_{ij},-p_{ij})$.
In particular,
$$
\oomega_0=\sum_{i=1}^n \oomega_{i0}, \quad 
\oomega_1=\sum_{i=1}^n\sum_{j=1}^{n_i} \oomega_{ij},
$$
$$
H_0=\sum_{i=1}^nH_{i0}, \quad H_1=\sum_{i=1}^n\sum_{j=1}^{n_i}H_{ij},
\qquad  H_{ij}=H_{ij}(I_{ij},q_{ij},p_{ij})
$$
where, from now on, we use the same notation for a function (or a 2-form) and its
lift, by abuse of notation. We will assume that each coordinate cylinder
$S^1\times(a_{ij},b_{ij})\times\{0\}\times\{0\}\subset M_{ij}$ consists of
{\it relative equilibrium points}, i.e.\ the co-vector $dH_{ij}$ is
proportional to $dI_{ij}$ at any its point (with coefficient
depending on the point), $1\le i\le n$, $0\le j\le n_i$. Therefore
the $2N$-dimensional symplectic submanifold
 \begin{equation} \label {eq:subman}
\prod_{i=1}^n\prod_{j=0}^{n_i}S^1\times(a_{ij},b_{ij})\times\{0\}\times\{0\}\subset
M_0\times M_1
 \end{equation}
is invariant under the flow of the model system (\ref {eq:model}) and
it is fibred by invariant $N$-dimensional tori
$\prod_{i=1}^n\prod_{j=0}^{n_i}S^1\times\{(I_{ij},0,0)\}$ where
$N:=\sum_{i=1}^n(1+n_i)$. Those solutions of the model system whose
phase orbits are contained in the invariant submanifold (\ref
{eq:subman}) will be called the {\em generating solutions}. Consider
one of these $N$-dimensional tori, $\Lambda\o$, and a
($4N-2$)-dimensional ``cross section'' $\Sigma$ in the
$4N$-dimensional phase space $M_0\times M_1$, which is transversal to
the two-dimensional phase orbits of the generating solutions
contained in the $N$-torus $\Lambda\o$.

Let us describe the {\em unperturbed system}. Suppose that a
$S^1$-invariant function
$F_{ij}=F_{ij}(I_{i0},q_{i0},p_{i0},\varphi_{ij}-\varphi_{i0},I_{ij},q_{ij},p_{ij})$
is given on each direct product $M_{i0}\times M_{ij}$, $1\le j\le
n_i$. Put
 \begin{equation} \label {eq:Phi}
\Phi:=\sum_{i=1}^n\sum_{j=1}^{n_i}F_{ij}.
 \end{equation}
As {\em the unperturbed system}, we will regard the 0-Hamiltonian
system
 \begin{equation} \label {eq:unpert}
 (M_0\times M_1,M_0,p;\oomega_0,\oomega_1;\somega H_0,H_1+\somega^2\Phi)^0
 \end{equation}
with parameter $0<\somega\ll1$. Then the $S^1$-action is given via the
Hamiltonian function $I_0=\sum_{i=1}^nI_{i0}$ and the 0-Hamiltonian
function $I_1=\sum_{i=1}^n\sum_{j=1}^{n_i}I_{ij}$.

Both systems described above: the model one (\ref {eq:model}) and the
unperturbed one (\ref {eq:unpert}), are systems with slow and fast
variables, because of a small factor $\omega$ in their Hamiltonian
function $\omega H_0$.

In the following theorems, we suppose that each Hamiltonian system $(M_{ij},\oomega_{ij},H_{ij})$ possesses the following properties of periodicity and nondegeneracy:

1) all solutions of the Hamiltonian system $(M_{ij},\oomega_{ij},H_{ij})$ are periodic with periods $T_{ij}\circ H_{ij}$, for some functions $T_{ij}=T_{ij}(h)\ne0$;

2) the functions $H_{ij}=H_{ij}(I_{ij},q_{ij},p_{ij})$ satisfy the following nondegeneracy conditions at the points $(I_{ij},0,0)\in(a_{ij},b_{ij})\times\{0\}\times\{0\}$:
 \begin{equation} \label {eq:isoen}
 \frac{\partial H_{ij}}{\partial I_{ij}}=\Omega_{ij}(I_{ij}), \ \
 \frac{\partial^2H_{ij}}{\partial I_{ij}^2}\ne0
 = \frac{\partial H_{ij}}{\partial q_{ij}}=\frac{\partial H_{ij}}{\partial p_{ij}}, \ \
 \det \frac{\partial^2H_{ij}}{\partial(q_{ij},p_{ij})^2}=\Omega_{ij}^2(I_{ij}),
 \end{equation}
where $\Omega_{ij}(I_{ij}):=2\pi/T_{ij}(H_{ij}(I_{ij},0,0))$, $1\le
i\le n$, $0\le j\le n_i$ (in particular, the circles $\{I_{ij}\}\times S^1\times\{0\}\times\{0\}$ are elliptic nondegenerate relative equilibria of this $S^1$-symmetric system).

Suppose also that the collection of ``angular frequencies'' $\Omega_{ij}=\Omega_{ij}(I_{ij}\o)$ (for some frequencies $I_{ij}\o\in[a_{ij}\o,b_{ij}\o]$) and the real number $\somega>0$ satisfy the following conditions:

3) the ``relative resonance'' condition 
 \begin{equation} \label {maxrez'}
 \somega^{\overline{1-j}}\Omega_{ij}=\fomega_1+k_{ij}\frac{2\pi}{T},
 \qquad 1\le i\le n, \quad 0\le j\le n_i,
 \end{equation}
where the integers $k_{ij}\in\bbZ$ are not $0$ simultaneously, $\overline\ell:=\max\{0,\ell\}$,
$\fomega_1:=\somega\Omega_{10}$ and $T>0$;

4) the nondegeneracy condition (for $\alpha:=\fomega_1T$ and $C_1>|\Delta_{ij}|$, cf.\ (\ref {eq:Delta:ij}) below)
 \begin{equation} \label {eq:nondeg}
 \alpha \not\in 2\pi\bbZ \ \mbox{ for }N=n, \quad
 \alpha \not\in\left[-C_1\somega^2T,C_1\somega^2T\right]+2\pi\bbZ \ \mbox{ for }N>n;
 \end{equation}

4') the more delicate nondegeneracy condition (with $C_2>0$ in Theorem \ref {th:mult} below),
 \begin{equation} \label {eq:nondeg'}
 \alpha:=\fomega_1T \not\in2\pi\bbZ, \qquad
 \alpha+\Delta_{ij}\somega^2T\not\in\left[-C_2\somega^3T,C_2\somega^3T\right]+2\pi\bbZ
 \end{equation}
for $1\le i\le n$, $1\le j\le n_i$. Here $\Delta_{ij}$ is expressed in terms of the first and second partial derivatives of the function $\langle\widehat
F_{ij}\o\rangle=\langle\widehat F_{ij}\o\rangle(q_{ij},p_{ij})$ at the point $(0,0)$ and, in particular, 
 \begin{equation} \label {eq:Delta:ij}
\Delta_{ij}:=\frac{\Omega_{ij}}2
\Tr\left(\left(\frac{\partial^2H_{ij}(I_{ij}\o,0,0)}{\partial(q_{ij},p_{ij})^2}\right)^{-1}
\frac{\partial^2\langle\widehat F_{ij}\o\rangle(0,0)}{\partial(q_{ij},p_{ij})^2}\right)
 \ \mbox{при}\ d\langle\widehat F_{ij}\o\rangle(0,0)=0,
 \end{equation}
where the function $\langle\widehat F_{ij}\o\rangle$ is obtained by averaging the function 
$F_{ij}|_{\{(I_{i0}\o,0,0)\}\times H_{ij}^{-1}(H_{ij}(I_{ij}\o,0,0))}=:\widehat F_{ij}\o$ along 
$\frac{2\pi}{\Omega_{ij}}$-periodic solutions of the system 
$(M_{ij},\oomega_{ij},H_{ij})$.

\begin{The} [(on the number of relatively-periodic solutions)] \label {th1'}
\ Suppose that each Hamiltonian system $(M_{ij},\oomega_{ij},H_{ij})$ possesses the properties 1) and 2) of periodicity and nondegeneracy, and that the number of the systems is $N\ge2$. Then, for any collection of segments 
$[a_{ij}\o,b_{ij}\o]\subset(a_{ij},b_{ij})$, there exist real numbers
$\omega_0,C_1,C_2>0$ such that the following conditions hold for any
$\somega\in(0,\omega_0]$. Suppose that, for some numbers $I_{ij}\o\in[a_{ij}\o,b_{ij}\o]$, the real number $\somega$ and the collection of ``angular frequencies'' $\Omega_{ij}=\Omega_{ij}(I_{ij}\o)$ satisfy the ``relative resonance'' condition 3), as well as either the nondegeneracy condition 4) or the more delicate nondegeneracy condition 4'). 
Then there exists $\varepsilon_0>0$ such that, for $0<\varepsilon\le\varepsilon_0$, any $S^1$-symmetric \(``perturbed''\) Hamiltonian system 
 \begin{equation} \label {eq:pert}
 (M,\oomega_0+\varepsilon\oomega_1,\widetilde H) \cong
 (M,M_0,p;\oomega_0,\oomega_1;\widetilde H,\widetilde H_1+\somega^2\widetilde\Phi)^\varepsilon
 \end{equation}
on $M=M_0\times M_1$ has at least $N-1$ $(T,\alpha)$-periodic
solutions close to generating solutions with angular frequencies
$\somega\Omega_{i0},\Omega_{ij}$, provided that $\widetilde
H:=\somega\widetilde H_0+\varepsilon\widetilde
H_1+\somega^2\varepsilon\widetilde\Phi$, the function $\widetilde H_0$
``projects'' to the factor $M_0$, and $\|\widetilde
H_0-H_0\|_{C^2}+\|\widetilde
H_1-H_1\|_{C^2}+\|\widetilde\Phi-\Phi\|_{C^2}\le\varepsilon_0$.
Moreover there exist at least $2^{N-2}$ such solutions counted with
multiplicities. The phase orbits of all such solutions are contained in
some $N$-dimensional torus $\widetilde\Lambda$ that is
$O(\somega)$-close \(and even $O(\somega^2)$-close in the case
$d\langle F_{ij}\o\rangle(0,0)=0$\) to the torus
$\Lambda\o:=\prod_{i=1}^n\prod_{j=0}^{n_i}S^1\times\{(I_{ij}\o,0,0)\}$
with respect to a $C^1$-norm. The intersection points of these phase
orbits with a cross section $\Sigma$ \(i.e.\ with a 
transversal surface to the two-dimensional phase orbits of generating
solutions on $\Lambda\o$\) coincide with critical points of the
function $\Psi|_{\widetilde\Lambda\cap\Sigma}$ where $\Psi$ is the
generating function of the perturbed succession map $g_{\widetilde
H,\widetilde H_1+\somega^2\widetilde\Phi}^T g_{\widetilde
I,I_1}^{-\alpha}:M\to M$, $\widetilde I:=I_0+\varepsilon I_1$ {\rm
(see Definition \ref {def:gen:func} below)}.

Suppose that the $0$-Hamiltonian system {\rm(\ref {eq:unpert})} is contained in a $r$-parameter family of $\varepsilon$-Hamiltonian systems of the form {\rm(\ref {eq:pert})} such that the functions $\widetilde H_0,\widetilde H_1,\widetilde\Phi$ depend smoothly {\rm(}or analytically{\rm)} on $r\ge2$ parameters $(\varepsilon,\somega,\dots)$, moreover $\widetilde H_0=H_0$, $\widetilde H_1=H_1$, $\widetilde\Phi=\Phi$ for the zero values of parameters $(0,0,\dots)$. Then, for $\varepsilon=0$, there exists a $(N+r-1)$-parameter family of $N$-dimensional tori $\Lambda$ that are invariant w.r.t.\ the corresponding $0$-Hamiltonian systems and depend smoothly {\rm(}respectively, analytically{\rm)} on the tuple $\Omega_{**}$ of ``angular frequencies'' $\Omega_{ij}=\Omega_{ij}(I_{ij})$ and the indicated tuple of parameters apart from $\varepsilon$ {\rm(}in the domain $|(\somega,\dots)|<\somega_0(\Omega_{**})${\rm)}, moreover $\Lambda=\Lambda\o$ if $\varepsilon=\somega=\ldots=0$.
Every torus $\Lambda$ of this family corresponding to a ``relatively resonance'' tuple of ``angular frequencies'' $\Omega_{ij}$ and a real number $\somega$ {\rm(}with $\somega\ne0$ if $n>1${\rm)}, is contained in a $(2+r)$-parameter family of $N$-dimensional tori $\widetilde\Lambda$, depending smoothly {\rm(}respectively, analytically{\rm)} on $T,\alpha$ and the indicated tuple of parameters {\rm(}where $|\varepsilon|<\varepsilon_0(T,\alpha,k_{**},\somega,\dots)${\rm)}.
If $\varepsilon=0$ then the torus $\widetilde\Lambda$ coincides with the torus $\Lambda$. If $\varepsilon,\somega>0$ then the torus $\widetilde\Lambda$ possesses the above properties w.r.t.\ the $(T,\alpha)$-periodic solutions of the ``perturbed system''.
\end{The}

Similarly to \Sec\ref {subsec:period} (B1), one can show that, for any
collection $(\a\o,\b\o)$ of real numbers $a_{ij}\o,b_{ij}\o$ under
consideration and for small enough $0<\somega\ll1$, the period $T$ can
take an arbitrary value of the form $T\ge2\pi a_0(\a\o,\b\o)/\omega$,
hence the quantity $\somega^2T$ can be arbitrarily small. Thus any of
the nondegeneracy conditions (\ref {eq:nondeg}) and (\ref
{eq:nondeg'}) can always be fulfilled.

\begin{The} [(on symmetric relatively-periodic solutions)] \label {th:sym}
Suppose that, under the hypothesis of Theorem {\rm\ref {th1'}}, each
of three systems: the model system {\rm(\ref {eq:model})}, the
unperturbed system {\rm(\ref {eq:unpert})} and the perturbed system
{\rm(\ref {eq:pert})} is reversible {\rm(}Definition {\rm\ref {def:reversible})}. 
Then the perturbed system
{\rm(\ref {eq:pert})} admits exactly $2^{N-2}$ symmetric
$(T,\alpha)$-periodic solutions that are $O(\somega)$-close \(and
even $O(\somega^2)$-close in the case $d\langle
F_{ij}\o\rangle(0,0)=0$\) to the generating symmetric solutions with
the angular frequencies under consideration. Each of these $2^{N-2}$
solutions smoothly depends on the pair of parameters $(T,\alpha)$.
Moreover the function $\Psi|_{\widetilde\Lambda}$ is an even function
in the collection $\vvarphi$ of angular frequencies
$\varphi_{ij}|_{\widetilde\Lambda}$, and the phase orbits of the
symmetric $(T,\alpha)$-periodic solutions pass through the points
$\vvarphi$ of the torus $\widetilde\Lambda$ with the property
$\vvarphi=-\vvarphi$.
\end{The}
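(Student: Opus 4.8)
The plan is to use the reversing involution $J=J_0\times J_1$ to identify the symmetric $(T,\alpha)$-periodic solutions with the fixed points of $J$ on the invariant torus $\widetilde\Lambda$ supplied by theorem~\ref{th1'}, and then to count these fixed points topologically. First I would record three consequences of reversibility. Since $J$ is anti-canonical and, by hypothesis, preserves $\widetilde H,\widetilde H_1,\widetilde\Phi$ together with $\widetilde I,I_1$, it intertwines each flow with its inverse: $Jg^t_{\widetilde H,\ldots}=g^{-t}_{\widetilde H,\ldots}J$ and $Jg^\varphi_{\widetilde I,I_1}=g^{-\varphi}_{\widetilde I,I_1}J$. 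Because the two fields $v_{\widetilde H,\ldots}$ and $v_{\widetilde I,I_1}$ commute, the succession map $P:=g^T_{\widetilde H,\widetilde H_1+\omega^2\widetilde\Phi}\,g^{-\alpha}_{\widetilde I,I_1}$ is reversible, $JPJ=P^{-1}$. Finally, since $\widetilde\Lambda$ is the unique invariant $N$-torus near $\Lambda\o$ while $J\widetilde\Lambda$ is an invariant torus equally close to the $J$-invariant $\Lambda\o$, uniqueness forces $J\widetilde\Lambda=\widetilde\Lambda$; in the angle coordinates $\vvarphi$ the induced involution is $\vvarphi\mapsto-\vvarphi$, and likewise on the $(N-2)$-torus $\widetilde\Lambda\cap\Sigma$.

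The central step is claim~(3), the evenness of $\Psi|_{\widetilde\Lambda}$. This I would deduce from $JPJ=P^{-1}$ together with the construction of the generating function in definition~\ref{def:gen:func}: $\Psi$ is, up to an additive constant, an action-type integral along the trajectory segments interpolating the succession map, and the reversing symmetry carries each such segment to its time-reverse, under which the action integral is invariant. Transported to $\widetilde\Lambda$ this reads $\Psi(-\vvarphi)=\Psi(\vvarphi)$. Evenness then gives claim~(4) at once: at a point $\vvarphi_*$ with $\vvarphi_*=-\vvarphi_*$ the differential of the reversing involution is $-\mathrm{id}$, whence $d\Psi_{\vvarphi_*}=-d\Psi_{\vvarphi_*}=0$, so every fixed point of $\vvarphi\mapsto-\vvarphi$ is a critical point of $\Psi|_{\widetilde\Lambda\cap\Sigma}$. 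By theorem~\ref{th1'} each such critical point is the intersection of $\Sigma$ with the phase orbit of a $(T,\alpha)$-periodic solution, and since this orbit meets the fixed locus $\{\vvarphi=-\vvarphi\}$ of the (suitably rotated) involution it is symmetric.

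To finish I would count. On the $(N-2)$-dimensional torus $\widetilde\Lambda\cap\Sigma$ the involution $\vvarphi\mapsto-\vvarphi$ has exactly $2^{N-2}$ fixed points, so the construction above produces at least $2^{N-2}$ symmetric solutions. Conversely, by the correspondence between $(T,\alpha)$-periodic orbits near $\Lambda\o$ and critical points of $\Psi|_{\widetilde\Lambda\cap\Sigma}$ established in theorem~\ref{th1'}, every symmetric such solution has its phase orbit contained in $\widetilde\Lambda$, and being fixed by the reversing symmetry after normalizing the $S^1$- and time-shifts it meets $\Sigma$ at one of these $2^{N-2}$ points; distinct points give distinct phase orbits, so the count is exact. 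Smooth dependence on $(T,\alpha)$ follows because the $2^{N-2}$ fixed points of $-\mathrm{id}$ on the torus are topologically rigid while $\widetilde\Lambda$ depends smoothly on $(T,\alpha)$, and the $O(\omega)$- (respectively $O(\omega^2)$-) closeness to the generating symmetric solutions is inherited verbatim from theorem~\ref{th1'}.

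The hard part is the evenness of $\Psi$: it is the one place where the dynamical reversibility $JPJ=P^{-1}$ must be converted into a genuine symmetry of the variational generating object, and it depends on the precise normalization of $\Psi$ fixed in definition~\ref{def:gen:func}. Everything else — the rigidity count of $2^{N-2}$ fixed points, the passage from critical points to periodic solutions, and the smoothness — is then formal or quoted from theorem~\ref{th1'}.
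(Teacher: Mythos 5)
The first thing to say is that the paper does not actually prove theorem~\ref{th:sym}: it states explicitly that the proofs of theorems~\ref{th1'}--\ref{th:nondeg} are deferred to a future publication, and what it offers here is only the two-stage scheme of \Sec\ref{subsec:ideas} together with the key inputs (theorem~\ref{th:mult} and corollary~\ref{cor:unic:basis}). Measured against that scheme, your skeleton is the right one: the evenness of $\Psi|_{\widetilde\Lambda}$, the identification of the $J$-fixed points $\vvarphi=-\vvarphi$ with symmetric orbits, and the critical-point correspondence of theorem~\ref{th1'} are exactly the ingredients the paper's statements presuppose, and your derivation of ``fixed point of $-\mathrm{id}$ $\Rightarrow$ critical point of the even function $\Rightarrow$ periodic orbit $\Rightarrow$ symmetric orbit'' is the intended mechanism.

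Three soft spots, however, are worth naming because they are precisely the technical content the paper postpones. First, your justification of $J\widetilde\Lambda=\widetilde\Lambda$ by ``uniqueness of the invariant $N$-torus'' misdescribes $\widetilde\Lambda$: for the perturbed system this torus is \emph{not} invariant under the flow (only the finitely many periodic orbits inside it are); what one actually uses is that the construction of $\widetilde\Lambda$ in stage two of the scheme is natural with respect to $J$, so that $J\widetilde\Lambda$ satisfies the same defining property and coincides with $\widetilde\Lambda$. Second, the exactness of the count $2^{N-2}$ and the smooth dependence on $(T,\alpha)$ are obtained in the paper's framework from the half-period data of corollary~\ref{cor:unic:basis} --- both $\pnt=\gamma(0)$ and $\pnt'=g_{I_0,I_1}^{-\alpha/2}(\gamma(T/2))$ are $J$-fixed, which is the classical transversal-intersection argument for symmetric periodic orbits; your purely topological count leaves unexamined whether two of the $2^{N-2}$ fixed points of $-\mathrm{id}$ on $\widetilde\Lambda\cap\Sigma$ could lie on the same two-dimensional phase orbit, and whether a symmetric orbit could pass only through $J$-fixed points of $\widetilde\Lambda$ that miss $\Sigma$ (the points with $\vvarphi=-\vvarphi$ need not satisfy $\sum\varphi_{ij}=0\pmod{2\pi}$); these are parity checks on the lattice of the 2-torus orbits that you assert rather than prove. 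Third, the evenness of $\Psi$ requires verifying $J^*\alpha=\alpha$ for the specific 1-form of definition~\ref{def:gen:func} using $J^*\oomega=-\oomega$ and $JAJ=A^{-1}$; you correctly flag this as the hard step but do not carry it out. None of these is a wrong turn, but all three are gaps relative to a complete argument.
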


\begin{The} [(on stability of a relatively-periodic solution)] \label {th:stab}
Under the hypothesis of Theorem {\rm\ref {th1'}}, suppose that all
the numbers $\frac{\partial^2H_{ij}}{\partial
I_{ij}^2}(I_{ij}\o,0,0)$ have the same sign, e.g.\ negative
\(respectively positive\). Suppose also that either the 
strong nondegeneracy condition holds:
 $$
 \alpha\not\in\pi\bbZ \ \mbox{ for }N=n, \quad
 \alpha\not\in\left[-C_1\somega^2T,C_1\somega^2T\right]+\pi\bbZ \ \mbox{ for }N>n
 $$
and the following condition of having the same sign holds: all the signs
 \begin{equation} \label {eq:eta:ij}
\eta_{ij}:={\rm sgn\,}\left(\Omega_{ij}\Tr\frac{\partial^2H_{ij}(I_{ij}\o,0,0)}{\partial(q_{ij},p_{ij})^2}\right),
 \quad 1\le i\le n, \ 0\le j\le n_i,
 \end{equation}
are equal, or the following 
more delicate condition holds: for any set of real numbers $\alpha_{ij}\in\bbR$, $1\le i\le n$,
$0\le j\le n_i$, such that
 $$
 \alpha_{i0}=\eta_{i0}\alpha, \
 |\alpha_{ij}-\eta_{ij}(\alpha+\Delta_{ij}\somega^2T)|\le C_2\somega^3T,
 \quad 1\le i\le n, \ 1\le j\le n_i,
 $$
the sum of any two, possibly coinciding, numbers of the set does not
belong to the set $2\pi\bbZ$. Then the \($(T,\alpha)$-periodic due to
Theorem {\rm\ref {th1'}}\) phase orbit of the perturbed system
{\rm(\ref {eq:pert})} passing through any nondegenerate critical
point of local minimum \(respectively maximum\) of the function
$\Psi|_{\widetilde\Lambda\cap\Sigma}$ is orbitally structurally
stable in linear approximation {\rm(see Definition \ref {def5})}.
\end{The}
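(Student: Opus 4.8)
The plan is to reduce orbital structural stability in linear approximation (definition \ref{def5}) to a spectral statement about the linearized succession map and then to read off that spectrum from the generating function $\Psi$ and the averaging theorem \ref{th:mult}. First I would linearize the perturbed succession map $P:=g^T_{\widetilde H,\,\widetilde H_1+\omega^2\widetilde\Phi}\,g^{-\alpha}_{\widetilde I,I_1}$ at the fixed point $x_*\in\widetilde\Lambda\cap\Sigma$ supplied by theorem \ref{th1'}. Since the system is $S^1$-symmetric and conserves $\widetilde H$ and $\widetilde I$, the vectors $v_{\widetilde H,\widetilde H_1+\omega^2\widetilde\Phi}(x_*)$ and $v_{\widetilde I,I_1}(x_*)$ span the tangent to the two-dimensional phase orbit and are fixed by $DP_{x_*}$, while $d\widetilde H,d\widetilde I$ give the conjugate trivial directions; quotienting these four directions out yields a linear symplectic map $A\in\mathrm{Sp}(4N-4,\bbR)$ on a reduced symplectic space. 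By definition \ref{def5} the orbit is orbitally structurally stable in linear approximation precisely when $A$ is strongly stable, and by the Krein--Gelfand--Lidskii theory this holds exactly when all multipliers of $A$ lie on the unit circle and no two multipliers of opposite Krein signature coincide; for the simple spectrum obtained below this amounts to all multipliers lying in $S^1\setminus\{\pm1\}$ with no multiplier equal to the complex conjugate of another.

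Next I would block-diagonalize $A$, up to an error of the order required below, using the averaging theorem \ref{th:mult} and corollary \ref{cor:unic:basis} together with the near-product structure of the unperturbed system \eqref{eq:unpert}. To leading order in $\omega,\varepsilon$ the fast normal oscillations decouple from the slow longitudinal drift, so $A$ splits into a tangential (``slow-torus'') block of dimension $2(N-2)$, on the directions of $T_{x_*}(\widetilde\Lambda\cap\Sigma)$ and their conjugate action directions, and $N$ normal blocks $A_{ij}\in\mathrm{Sp}(2,\bbR)$, one for each coordinate plane $(q_{ij},p_{ij})$, $1\le i\le n$, $0\le j\le n_i$ (note $2(N-2)+2N=4N-4$). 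Here I use that, by theorem \ref{th1'}, the fixed point $x_*$ is a critical point of $\Psi|_{\widetilde\Lambda\cap\Sigma}$, so the tangential block is governed by the Hessian of this function.

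I would then show each block is elliptic and assemble the Krein count. For the normal blocks, the hypothesis $\det\partial^2H_{ij}/\partial(q_{ij},p_{ij})^2=\Omega_{ij}^2>0$ forces that Hessian to be sign-definite, so each $A_{ij}$ is elliptic with multipliers $e^{\pm i\theta_{ij}}$ on $S^1$ and a definite Krein signature equal to $\eta_{ij}$ as in (\ref{eq:eta:ij}); carrying the averaged coupling $\langle F_{ij}\o\rangle$ to second order identifies the rotation angle as $\theta_{ij}=\eta_{i0}\alpha$ for $j=0$ and $\theta_{ij}=\eta_{ij}(\alpha+\Delta_{ij}\omega^2T)+O(\omega^3T)$ for $j\ge1$, with $\Delta_{ij}$ given by (\ref{eq:Delta:ij}). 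For the tangential block I would bring $A$ to the normal form of the monodromy of a twist map whose generating function is $\Psi|_{\widetilde\Lambda\cap\Sigma}$: the common sign of $\partial^2H_{ij}/\partial I_{ij}^2$ provides the definite twist (Legendre condition), so pairing a negative twist with a nondegenerate local minimum (respectively positive twist with a maximum) of $\Psi$ makes the reduced second variation definite and the tangential block elliptic with one fixed Krein signature. Strong stability then follows from Krein's criterion once collisions of opposite signature are excluded: under the first variant the strong nondegeneracy keeps the normal multipliers off $\pm1$ while equality of all $\eta_{ij}$ makes every normal (and, after Step~four, tangential) pair share one signature, so no collision can be destructive; under the second variant the requirement that every admissible sum of two of the signed angles $\alpha_{ij}$ (with $\alpha_{ij}$ ranging over the whole $C_2\omega^3T$-tolerance ball, which is exactly the set of angles reachable by $C^2$-perturbations of size $\le\varepsilon_0$) avoids $2\pi\bbZ$ says precisely that no multiplier is real and no multiplier equals the conjugate of another, which is robust under perturbation and hence yields structural stability.

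The hard part will be the quantitative content of Steps two and three: controlling the averaging remainders sharply enough — to order $\omega^3T$, as encoded in theorem \ref{th:mult} — to justify the block-diagonalization and to pin down the normal rotation angles $\theta_{ij}$ together with the correction $\Delta_{ij}$, while simultaneously handling the delicate near-identity tangential block, where ellipticity rests on the exact matching of the twist sign with the Morse index of $\Psi$ and where the multipliers cluster near $1$, the very point at which Krein's theory is most fragile.
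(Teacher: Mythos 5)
Your proposal follows essentially the same route as the paper: the paper itself only sketches the argument (Section \ref{subsec:ideas}), deferring details to future work, and that sketch consists precisely of your two ingredients — the block lower-triangular normal form of the linearized unperturbed succession map from theorem \ref{th:mult} and corollary \ref{cor:unic:basis} (giving the $N$ elliptic rotation blocks with signed angles $\alpha_{i0}=\eta_{i0}\alpha$, $\alpha_{ij}\approx\eta_{ij}(\alpha+\Delta_{ij}\omega^2T)$, plus the unipotent twist part), combined with the generating-function method and the ellipticity/Krein-signature criterion of definition \ref{deff4} and proposition \ref{stAB}(B). Your decomposition into the $2(N-2)$-dimensional tangential block governed by the Hessian of $\Psi|_{\widetilde\Lambda\cap\Sigma}$ and $N$ normal rotation blocks, and your reading of the two alternative hypotheses as excluding collisions of multipliers of opposite Krein signature, match the paper's intended proof; only your parenthetical reformulations of strong stability (``simple spectrum'', ``no multiplier equal to the conjugate of another'') are slightly imprecise, since same-signature collisions are harmless, but you state the correct criterion elsewhere.
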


In 
all the next statements of the present section (cf.\ Theorems \ref {th:nondeg}, \ref {th:mult} and Lemma \ref {lem:mult}), the nondegeneracy condition (\ref {eq:nondeg}) is not assumed to be fulfilled.

\begin{The} [(on ``gaps'' in families of relatively-periodic solutions)]
\label {th:nondeg}
Suppose that, under the hypothesis of Theorem {\rm\ref {th1'}}, the
number $\somega\in(0,\somega_0]$ and the collection of angular
frequencies $\Omega_{ij}=\Omega_{ij}(I_{ij}\o,0,0)$ satisfy 
the conditions {\rm 1), 2), 3)} of periodicity, nondegeneracy and ``relative resonance'' \(but not necessary the nondegeneracy condition {\rm 4)}\).
Suppose that $\alpha\in2\pi\bbZ$ and that a smooth function $R_0$ on $M$
``projects'' to the factor $M_0$. Let us fix the two-dimensional
torus $\gamma\subset\Lambda\o$ corresponding to a $T$-periodic
solution of the model system {\rm(\ref {eq:model})}. Suppose that
some \(and, hence, any\) point of $\gamma$ is not a critical point of
the function $\langle R_0\o\rangle$ that is obtained by averaging 
the function $R_0\o:=R_0|_{\cap_iH_{i0}^{-1}(H_{i0}(I_{i0}\o,0,0))}$
along $T$-periodic solutions of the model system. Then, for any real
number $D>0$ and a neighbourhood $U_1\subset M_1$ of the projection $p_1(\Lambda\o)$ of the torus $\Lambda\o$ in $M_1$ with a compact closure $\overline U_1$, 
there exist a real number $\mu_0>0$ and a neighbourhood
$U_0$ of the projection $p(\gamma)$ of the two-dimensional torus
$\gamma$ to $M_0$ such that, for any $\varepsilon>0$,
$\somega\varepsilon/\mu_0\le\mu\le\mu_0$ and $|\widetilde T-T|+|\widetilde\alpha|\le D\mu$, the following holds.
The neighbourhood $U:=U_0\times U_1$ of the torus $\gamma$ does not contain any
$(\widetilde T,\widetilde\alpha)$-periodic solution of the perturbed system
{\rm(\ref {eq:pert})}, provided that $\widetilde H_0=H_0+\mu\widetilde R_0$,
the function $\widetilde R_0$ ``projects'' to the factor $M_0$, the function $\widetilde H_1$ ``projects'' to the factor $M_1$, $\|\widetilde R_0-R_0\|_{C^2}\le\mu_0$ and $\|\widetilde\Phi\|_{C^2}\le D$.
\end{The}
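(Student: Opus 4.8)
The plan is to rule out $(\widetilde T,\widetilde\alpha)$-periodic solutions by showing that the \emph{slow} ($M_0$) component of the perturbed succession map $P:=g^{\widetilde T}_{\widetilde H,\widetilde H_1+\omega^2\widetilde\Phi}\,g^{-\widetilde\alpha}_{\widetilde I,I_1}$ drifts transversally to $\gamma$ by an amount of order exactly $\mu$, so that $P$ has no fixed point in $U=U_0\times M_1$. A $(\widetilde T,\widetilde\alpha)$-periodic solution meeting $U$ would give a point $x\in U$ with $P(x)=x$, and in particular its $M_0$-component would be fixed; hence it suffices to bound the transverse part of $\pr_{M_0}P(x)-\pr_{M_0}x$ away from $0$.

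First I would isolate the slow dynamics. Since $\oomega_0$ is pulled back from $\widehat\oomega_0$ on $M_0$, $\oomega_1$ from $M_1$, and $\widetilde H_1,\widetilde R_0,R_0$ project to the respective factors, the $M_0$-component of the field of $\widetilde H=\omega H_0+\omega\mu\widetilde R_0+\varepsilon\widetilde H_1+\omega^2\varepsilon\widetilde\Phi$ equals, to leading order, $\omega X_{H_0}+\omega\mu X_{\widetilde R_0}+\omega^2\varepsilon X_{\widetilde\Phi}$, the Hamiltonian vector fields on $(M_0,\widehat\oomega_0)$ (with $\widetilde\Phi$ treated as a function of $x_0$ depending on the fast point as a parameter), the corrections from $\varepsilon\oomega_1$ being of higher order in $\varepsilon$. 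Because $\alpha\in2\pi\bbZ$, the relative-resonance condition {\rm(\ref{maxrez'})} with $j=0$ forces $\omega\Omega_{i0}T\in2\pi\bbZ$, so the unperturbed slow flow $g^{T}_{\omega H_0}$ is the $T$-periodic model solution and returns each point of $p(\gamma)$ to itself; thus the entire transverse displacement is created by the perturbations.

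Next I would apply the averaging method of Theorem~\ref{th:mult} and Corollary~\ref{cor:unic:basis} to the slow variables over the interval $\widetilde T\approx T$. The secular (non-oscillating) transverse drift produced by $\omega\mu X_{\widetilde R_0}$ over one resonant period equals $\mu$ times the Hamiltonian flow of the averaged function $\langle\widetilde R_0\o\rangle$; since $\|\widetilde R_0-R_0\|_{C^2}\le\mu_0$, this is $\mu\,X_{\langle R_0\o\rangle}+O(\mu\mu_0)$, and by the non-criticality hypothesis $d\langle R_0\o\rangle\ne0$ on $p(\gamma)$, hence $|d\langle R_0\o\rangle|\ge c_0>0$ on a small neighbourhood $U_0$, so its transverse component is bounded below by $c\mu$ for some $c>0$ and all $x\in U_0$. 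It remains to dominate three error sources by the parameter constraints: the coupling term contributes at most $\omega^2\varepsilon\,T\,\|\widetilde\Phi\|_{C^2}\le\const\cdot\mu_0D\mu$ to the transverse drift, because $\omega\varepsilon\le\mu_0\mu$ and $T=O(1/\omega)$, even with no cancellation and with $\|\widetilde\Phi\|_{C^2}$ merely bounded by $D$; the mismatch $|\widetilde T-T|\le D\mu$ displaces along the flow direction (tangent to $\gamma$), with transverse effect $O(\omega\mu)+O(\mu^2)$; and $g^{-\widetilde\alpha}_{\widetilde I,I_1}$ with $|\widetilde\alpha|\le D\mu$ acts along the $S^1$-orbit direction $X_{I_0}$, which is tangent to $p(\gamma)$, so it shifts the base point by $O(\mu)$ along $\gamma$ and alters the transverse drift only at order $O(\mu^2)$. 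Choosing $\mu_0$ small in terms of $D$ and the fixed data $c,c_0,\omega,T$ makes every error $<\tfrac{c}{2}\mu$, so the transverse part of $\pr_{M_0}P(x)-\pr_{M_0}x$ is at least $\tfrac{c}{2}\mu>0$ for all $x\in U$, and $P$ has no fixed point there.

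The hard part will be the rigorous isolation of the leading drift in the previous step: one must run the averaging normal form (the content of Theorem~\ref{th:mult}) far enough to see that in the degenerate regime $\alpha\in2\pi\bbZ$ the resonant ``twist'' terms that generated critical points in Theorem~\ref{th1'} vanish to leading order, leaving $\omega\mu\langle R_0\o\rangle$ as the dominant term of the generating function $\Psi$ on $\widetilde\Lambda$, whose differential is then nonzero by non-criticality. Equally delicate is the balance encoded in $\omega\varepsilon\le\mu_0\mu$: the fast--slow coupling $\widetilde\Phi$ need only be bounded (by the possibly large $D$), yet its dynamical effect is suppressed by the small coefficient $\omega^2\varepsilon$, and one must verify that this suppression beats the $O(\mu)$ drift uniformly on $U$. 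Once these two points are secured, the non-existence follows as above.
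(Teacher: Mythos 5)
First, a caveat on the comparison: the paper does not actually contain a detailed proof of theorem~\ref{th:nondeg} --- it only sketches a two-stage scheme in \Sec\ref{subsec:ideas} (averaging for the unperturbed $0$-Hamiltonian system via theorem~\ref{th:mult}, then the generating-function method for the $\varepsilon$-Hamiltonian perturbation) and defers the details to a future publication. Measured against that scheme, your overall strategy is the intended one: the averaged perturbation $\langle R_0\o\rangle$ is the leading obstruction to closing up the slow component of the succession map, and your bookkeeping of the coupling term ($\omega^{2}\varepsilon T\|\widetilde\Phi\|_{C^{2}}\le{\rm const}\cdot\mu_0D\mu$ using $\omega\varepsilon\le\mu_0\mu$ and $\omega T={\rm const}$), of $|\widetilde T-T|\le D\mu$ and of $|\widetilde\alpha|\le D\mu$ is correct.

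There is, however, a genuine gap in the key step. You treat the unperturbed slow succession map as the identity near $p(\gamma)$ (``the entire transverse displacement is created by the perturbations'') and then claim that the drift \emph{transverse to the two-torus $\gamma$} is bounded below by $\frac c2\mu$ for all $x\in U$. But $g^{T}_{\omega H_0}g^{-\alpha}_{I_0}$ is the identity only on the codimension-$n$ level set $\Theta_0:=\cap_iH_{i0}^{-1}(H_{i0}(I_{i0}\o,0,0))$ (all orbits of each factor on a fixed energy level share the same period); on the rest of the fixed neighbourhood $U_0$ it displaces a point by an amount of order $\sum_i|H_{i0}(x)-H_{i0}(I_{i0}\o,0,0)|$ directed along the individual flow directions $X_{H_{i0}}$, by the anisochronicity $\partial^{2}H_{i0}/\partial I_{i0}^{2}\ne0$ in (\ref{eq:isoen}). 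For points at distance of order $\mu$ from $\Theta_0$ this unperturbed displacement is itself $O(\mu)$, and ${\rm span}\{X_{H_{10}},\dots,X_{H_{n0}}\}$ meets $T\gamma$ only along the diagonal $X_{H_0}=\sum_iX_{H_{i0}}$, so for $n\ge2$ it contains directions transverse to $\gamma$ in which it can cancel part of $\mu\,\omega T\,X_{\langle R_0\o\rangle}$; your lower bound on the part of $P(x)-x$ transverse to $\gamma$ therefore fails as stated. The uncancellable obstruction lives in the quotient of $T_xM_0$ by ${\rm span}\{X_{H_{10}},\dots,X_{H_{n0}},X_{I_0}\}$, and it is precisely the hypothesis that $x$ is not a critical point of $\langle R_0\o\rangle$ --- the average of the \emph{restriction} of $R_0$ to $\Theta_0$, i.e.\ $d\langle R_0\o\rangle\notin{\rm span}\{dH_{i0}\}$ --- that makes this class nonzero. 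A complete argument must first localize any putative fixed point to an $O(\mu)$-neighbourhood of $\Theta_0$ using this same nondegeneracy, and only then read off the contradiction in the quotient; this is exactly what the paper's generating-function formulation (critical points of $\Psi|_{\Sigma\cap\widetilde\Lambda}$, definition~\ref{def:gen:func} and stage two of \Sec\ref{subsec:ideas}) packages automatically and what your direct drift estimate omits.
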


 \begin{nnExa} \label {exa:slow:fast:planets}
An example of ``perturbed'' systems of the form (\ref {eq:pert}) is the Hamiltonian system (\ref {eq:pert:planets}), (\ref {eq:rho:varepsilon}), which is the main system in this paper. Due to Lemma \ref {lem3:1}, the planar $N+1$ body problem reduces to this system. In this system, $M=M_0\times M_1$ is the direct product of phase spaces of ``scaled planets'' and ``satellites'', $\oomega_0=p^*\widehat\oomega_0$, $\oomega_1=p_1^*\widehat\oomega_1$, $\oomega=\oomega_0+\varepsilon\oomega_1$, and
 $$
\widetilde H=\somega\widetilde H_0(x_0;\mu)+\varepsilon\left(\widetilde
H_1(x_1;\nu)+\somega^2\widetilde\Phi(x_0,x_1;\mu,\nu,\rho)\right)
 $$
is the Hamilton function, where $\somega,\varepsilon,\mu,\nu,\rho\in\bbR$ are small parameters of the system (and $m_i,m_{ij}$ are regarded as constants for simplicity), $(x_0,x_1)\in M_0\times M_1$. The family of indicated systems (with all possible values of the parameters) contains the system (\ref {eq:pert:planets}), (\ref {eq:rho:varepsilon}) as a subfamily, where the small parameters in the subfamily are positive and related by (\ref {eq:rho:varepsilon}). The corresponding model and unperturbed systems coincide with the systems (\ref {eq:model:H:omega}) and (\ref {eq:unpert0}), (\ref {eq:unpert1}).
The planar $N+1$ body problem is $S^1$-symmetric and reversible. We will derive theorems \ref {th1}--\ref {th:degen:sym} from theorems \ref {th1'}--\ref {th:nondeg}.
 \end{nnExa}

\subsection{Proof of theorems \ref {th1'} and \ref {th:sym} and a scheme of proof of Theorems \ref {th:stab} and \ref {th:nondeg}} \label {subsec:ideas}
Let us describe two main stages of the proof of theorems \ref {th1'}--\ref {th:nondeg}.

{\it Stage one} is based on 
the averaging method on a submanifold 
\cite {8,9,29} (a similar assertion for 
systems with slow and fast variables see in \cite{Vorob11}. 
Using this method, we study the (0-Hamiltonian) unperturbed system (\ref {eq:unpert}) taking into account that it is close to the ``super-integrable'' model system (\ref {eq:model}). Namely, at first, we describe $(T,\alpha)$-periodic solutions of the
unperturbed system (\ref {eq:unpert}) that are close to the
generating solutions of the model system (\ref {eq:model}). At
second, we study the linearization of the ``succession map'' for
these solutions (see Theorem \ref {th:mult}).

Let us first consider the unperturbed system (\ref {eq:unpert}) in the simple partial case $(N,n)=(2,1)$ and $T=2\pi/|\Omega_{11}-\fomega_1|$ (an analogue of the Hill problem from the Lunar theory). 
By assumption, the circle $S^1\times\{(I_{10},0,0)\}$ is a relative equilibrium of the system $(M_{10},\oomega_{10},\somega H_{10})$, and the motion on it by virture of this system is homogeneous with angular frequency $\somega\Omega_{10}=\fomega_1$ (cf.\ (\ref {maxrez'})). Hence, this circle consists of equilibria of the Hamiltonian system 
$(M_{10},\oomega_{10},\somega H_{10}-\fomega_1 I_{10})$ (which is an analogue of the ``synodic Kepler problem'' (\ref {eq:sin:Kepler:1}) for the ``scaled planet'').
Recall that the $S^1$-action is given by the Hamiltonian $I_{10}$ and $0$-Hamiltonian $I_{11}$. Let us study the $0$-Hamiltonian system
 \begin{equation} \label {eq:unpert:}
(M_{10}\times M_{11},M_{10},p;\oomega_{10},\oomega_{11};\somega H_{10}-\fomega_1 I_{10},H_{11}-\fomega_1 I_{11}+\somega^2 F_{11})^0,
 \end{equation}
which is obtained from a ``linear combination'' of two commuting $0$-Hamiltonian systems: the unperturbed one and the system defining the $S^1$-action.
Clearly, the problem on finding its $T$-periodic solutions of the form $(\varphi_{10},I_{10},0,0,\dots)$ is equivalent to finding $(T,\alpha)$-periodic solutions of the unperturbed system, where $\alpha=2\pi\fomega_1/|\Omega_{11}-\fomega_1|$.
Put 
$$
\widehat F(\varphi,I,q,p;I_{10}):=F_{11}(I_{10},0,0,\varphi,I,q,p)/\Omega_{10}^2(I_{10}),
$$
where $\Omega_{10}(I_{10}):=2\pi/T_{10}(H_{10}(I_{10},0,0))$, cf.\ (\ref {eq:Phi}). 
Since $\fomega_1=\somega\Omega_{10}$, we have $
\fomega_1^2 \widehat F = \somega^2 F_{11}(I_{10},0,0,\varphi,I,q,p)$.
Therefore, the problem of finding solutions of our system (\ref {eq:unpert:}) of the form $(\varphi_{10},I_{10},0,0,\dots)$ is equivalent to finding solutions of the Hamiltonian system 
$$
(M_{11},\oomega_{11},H_{11}-\fomega_1 I_{11}+\fomega_1^2 \widehat F)
$$
depending on two parameters
$\Omega_{10},\fomega_1$. This system is called a {\em generalized Hill problem} \cite{K:vest13}.

\begin{Lem} [(on periodic solutions of the generalized Hill problem, cf.\ {\rm\cite[Theorem 2]{K:vest13}})]  \label {lem:mult}
Consider a {\rm$1$}-parameter family of Hamiltonian systems with two degrees of freedom, with the Hamilton function and the symplectic structure 
 \begin{equation} \label {eq:H11}
H_{11}(I,q,p) - \fomega_1 I + \fomega_1^2 \widehat F(\varphi,I,q,p), \qquad \oomega_{11}=d I\wedge d\varphi+dp\wedge dq
 \end{equation}
on $M_{11}=S^1\times(a,b)\times\bbR^2$, with parameter $\fomega_1\in\bbR$. Suppose that some neighbourhood of the cylinder $S^1\times(a,b)\times\{0\}\times\{0\}\subset M_{11}$ is filled by periodic trajectories of the Hamiltonian system $(M_{11},\oomega_{11},H_{11})$ with periods $T_{11}\circ H_{11}$, for some function $T_{11}=T_{11}(h)\ne0$. Suppose that the function $H_{11}=H_{11}(I,q,p)$ satisfies the nondegeneracy conditions {\rm(\ref {eq:isoen})} at the points $(I,0,0)\in(a,b)\times\{0\}\times\{0\}$, where $\Omega_{11}(I):=2\pi/T_{11}(H_{11}(I,0,0))$ {\rm(so, the system (\ref {eq:H11}) is a generalization of the systems (\ref {eq:unpert:Hill}) and (\ref {eq:my:unpert:Hill}), which are equivalent to the Hill problem (\ref {eq:unpert:Hill'}) if $\fomega_1\ne0$, cf.\ \S \ref {subsec:Hill:0})}. 
Then:

{\rm(A)} there exist a continuous function $\somega_0=\somega_0(\Omega)>0$ in $\Omega\in\Omega_{11}((a,b))$ and a {\rm2}-parameter family of $\frac{2\pi}{|\Omega-\fomega_1|}$-periodic solutions $\gamma_{\Omega,\fomega_1}(t)=(\varphi_{\Omega,\fomega_1}(t),I_{\Omega,\fomega_1}(t),q_{\Omega,\fomega_1}(t),p_{\Omega,\fomega_1}(t))$ of the system {\rm(\ref {eq:H11})} with parameters $\Omega\in\Omega_{11}((a,b))$ and $\fomega_1\in(-\somega_0(\Omega),\somega_0(\Omega))$ such that $\varphi_{\Omega,\fomega_1}(0)=0$ and $\gamma_{\Omega_{11}(I),0}(t)=(\Omega_{11}(I)t,I,0,0)$ for $I\in(a,b)$.
If the functions $H_{11},\widehat F$ depend smoothly on the parameters $\lambda_1,\dots$, then the function $\somega_0>0$ depends smoothly on $\Omega,\lambda_1,\dots$, moreover the solution $\gamma_{\Omega,\fomega_1}(t)$ depends smoothly on the parameters $\fomega_1,\Omega,\lambda_1,\dots$;

{\rm(B)} if $d\langle \widehat F\o\rangle(0,0)=0$ for some $I\o\in(a,b)$, then 
$\gamma_{\Omega\o,\fomega_1}(t)=((\Omega\o-\fomega_1)t,I\o,0,0)+O(\fomega_1^2)$. Here $\Omega\o:=\Omega_{11}(I\o)$, and the function $\langle \widehat F\o\rangle$ is obtained by averaging the function 
$\widehat F\o:=\widehat F|_{H_{11}^{-1}(H_{11}(I\o,0,0))}$ along the 
$\frac{2\pi}{\Omega\o}$-periodic solutions of the system 
$(M_{11},\oomega_{11},H_{11})$;

{\rm(C)} there exists a family of canonicas frames $e_{\Omega,\fomega_1,1},e_{\Omega,\fomega_1,2},e_{\Omega,\fomega_1,3},e_{\Omega,\fomega_1,4}$ in the tangent spaces $T_{x}M_{11}$, in which the linearization $dg_{H_{11}-\fomega_1I+\fomega_1^2\widehat F}^{2\pi/|\Omega-\fomega_1|}(x)$ of the ``succession map'' at the point $x:=\gamma_{\Omega,\fomega_1}(0)$ is given by the matrix
 $$
\left(
 \begin{array} {cccc}
   1& \frac{\partial^2H_{11}}{\partial I^2}(\Omega_{11}^{-1}(\Omega),0,0) \frac{2\pi}{|\Omega-\fomega_1|} & 0&  0\\
   0& 1&                                                 0&               0\\
   0& 0&                                   \cos\alpha_{11}& \sin\alpha_{11}\\
   0& 0&                                  -\sin\alpha_{11}& \cos\alpha_{11}
 \end{array}
\right),
 $$
for some smooth function $\alpha_{11}=\alpha_{11}(\Omega,\fomega_1)$ whose Taylor expansion in the variable $\fomega_1$ at $0$ has the form 
$\alpha_{11}=\eta(\fomega_1+\fomega_1^2\Delta)\frac{2\pi}{|\Omega-\fomega_1|}+O(\fomega_1^3)$ for $\Omega\in\Omega_{11}((a,b))$, where the function $\Delta=\Delta(\Omega)$ and the sign $\eta\in\{1,-1\}$ are the same as in {\rm(\ref {eq:Delta:ij})} and {\rm(\ref {eq:eta:ij})}:
$$
\Delta(\Omega\o):=\frac{\Omega\o}2
\Tr\left(\left(\frac{\partial^2H_{11}(I\o,0,0)}{\partial(q,p)^2}\right)^{-1}
\frac{\partial^2\langle \widehat F\o\rangle(0,0)}{\partial(q,p)^2}\right)
 \quad\mbox{при}\ \ d\langle \widehat F\o\rangle(0,0)=0,
$$
$$
\eta:={\rm sgn\,}\left(\Omega\o\Tr\frac{\partial^2H_{11}(I\o,0,0)}{\partial(q,p)^2}\right).
$$
Moreover, the vectors $e_{\Omega,\fomega_1,k}$ are bounded \($k=1,2,3,4$\), moreover the relations $e_{\Omega,\fomega_1,1}=\partial/\partial\varphi$, $e_{\Omega,\fomega_1,2}=\partial/\partial I$, $e_{\Omega,\fomega_1,3}\in\bbR_{>0}\,\partial/\partial q$ hold for $\widehat F\equiv0$ and hold up to $O(\fomega_1)$ in the general case. \qed
\end{Lem}

From Lemma \ref {lem:mult}, we easily obtain the following its multidimensional generalization for any $N\ge n\ge1$ and for any relatively-periodic solutions (including long periodic ones, i.e.\ those with arbitrary large period $T$).


\begin{The} [(on Jordan-Kronecker blocks of linearization of the unperturbed succession map)] \label {th:mult}
Suppose that, under the hypothesis of theorem {\rm\ref {th1'}}, the
number $\somega\in(0,\somega_0]$ and the set of angular frequencies
$\Omega_{ij}=\Omega_{ij}(I_{ij}\o,0,0)$ satisfy the conditions {\rm 1), 2), 3)} of periodicity, nondegeneracy and ``relative resonance'' \(but not necessary the nondegeneracy condition {\rm 4)}\). Then there exists a $N$-dimensional torus $\Lambda$ that is $O(\somega)$-close \(and even
$O(\somega^2)$-close in the case of $d\langle
F_{ij}\o\rangle(0,0)=0$\) to the torus $\Lambda\o$ with respect to a $C^1$-norm 
and is formed by the phase orbits of $(T,\alpha)$-periodic solutions
of the unperturbed system {\rm(\ref {eq:unpert})}. Moreover, $p(\Lambda)=p(\Lambda\o)$ and, for any point $x\in\Lambda$, there exist canonical frames
$e_{ij1},e_{ij2},e_{ij3},e_{ij4}$ in the tangent spaces
$T_{x_{ij}}M_{ij}$ \(where $x_{ij}:=\pr_{ij}(x)$, $\pr_{ij}:M\to
M_{ij}$ is the projection\) such that the linear part $d(g_{\somega
H_0,H_1+\somega^2\Phi}^T g_{I_0,I_1}^{-\alpha})(x)$ of the unperturbed
``succession map'' at the point $x$ with respect to this frame is
given by a blockwise lower-triangular matrix with the diagonal
blocks
 $$
\left(
 \begin{array} {cccc}
   1& \omega^{\overline{1-j}}T \frac{\partial^2H_{ij}}{\partial I_{ij}^2}(I_{ij}\o,0,0)& 0&               0\\
   0& 1&                                                 0&               0\\
   0& 0&                                   \cos\alpha_{ij}& \sin\alpha_{ij}\\
   0& 0&                                  -\sin\alpha_{ij}& \cos\alpha_{ij}
 \end{array}
\right),
 \qquad \begin{array}{l}1\le i\le n, \smallskip\\ 0\le j\le n_i.\end{array}
 $$
Here $\alpha_{ij}$ are some real numbers such that
$\alpha_{i0}=\eta_{i0}\alpha$,
$|\alpha_{ij}-\eta_{ij}(\alpha+\Delta_{ij}\somega^2T)|\le
C_2\somega^3T$ for $1\le j\le n_i$, where the numbers $\Delta_{ij}$
and the signs $\eta_{ij}\in\{1,-1\}$ are the same as in {\rm(\ref
{eq:Delta:ij})} and {\rm(\ref {eq:eta:ij})}. Furthermore all
non-diagonal blocks vanish, apart from those blocks whose column and row correspond to the factors $M_{i0}$ and $M_{ij}$ \(respectively\), $1\le j\le n_i$, in the direct product
$M=\prod_{i=1}^n\prod_{j=0}^{n_i}M_{ij}$. The vectors $e_{ijk}$ are
bounded \($k=1,2,3,4$\), and the relations
$e_{ij1}=\partial/\partial\varphi_{ij}$, $e_{ij2}=\partial/\partial
I_{ij}$, $e_{ij3}\in\bbR_{>0}\,\partial/\partial q_{ij}$ hold either 
exactly if $j=0$ or up to $O(\somega)$ if $1\le j\le n_i$.

In particular, if the nondegeneracy condition {\rm4)} holds then $\alpha_{ij}\not\equiv0\pmod{2\pi}$ for any $i,j$; if the strong nondegeneracy condition holds then $\alpha_{ij}\not\equiv0\pmod{\pi}$.
\end{The}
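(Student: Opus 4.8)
The plan is to reduce everything to the variational equation of the flow along the $(T,\alpha)$-periodic orbits: first build an invariant $N$-torus $\Lambda$ near $\Lambda\o$ by averaging, then compute the monodromy of the succession map $g^T_{\omega H_0,H_1+\omega^2\Phi}g^{-\alpha}_{I_0,I_1}$ and, using the direct-product splitting $M=\prod_{i,j}M_{ij}$ and the weakness $O(\omega^2)$ of the coupling $\Phi=\sum_{i,j}F_{ij}$, organize it into Jordan--Kronecker blocks.

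First I would produce $\Lambda$ and the periodic solutions. The unperturbed system (\ref{eq:unpert}) is an $O(\omega^2)$ perturbation of the super-integrable model system (\ref{eq:model}), whose invariant submanifold (\ref{eq:subman}) is foliated by the tori $\Lambda\o$. On a neighbourhood of $\Lambda\o$ I would perform one averaging step, taking the relative angles $\varphi_{ij}-\varphi_{i0}$ (on which $F_{ij}$ depends) as the fast phases, so that $\omega^2\Phi$ is replaced by $\omega^2\langle\Phi\rangle=\omega^2\sum_{i,j}\langle F_{ij}\rangle$ up to higher order. The averaged transverse Hamiltonian of the factor $M_{ij}$ is $\omega^{\overline{1-j}}H_{ij}+\omega^2\langle F_{ij}\rangle$, and its relative-equilibrium cylinder persists: if $d\langle F_{ij}\o\rangle(0,0)=0$ the equilibrium stays at $(q_{ij},p_{ij})=(0,0)$ and $\Lambda$ is $O(\omega^2)$-close to $\Lambda\o$, whereas in general the slow (planet, $j=0$) equilibria move by $O(\omega)$ — the coupling being of relative size $\omega$ for a factor whose own Hamiltonian is $O(\omega)$ — which gives the stated $O(\omega)$-closeness. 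The relative-resonance condition (\ref{maxrez'}) then makes every orbit on $\Lambda$ a $(T,\alpha)$-periodic solution.

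The core is the block form of the linearized succession map at $x\in\Lambda$. For the model system each factor splits cleanly: since $dH_{ij}=\Omega_{ij}(I_{ij})dI_{ij}$ on the cylinder, the mixed derivatives $\partial^2H_{ij}/\partial I_{ij}\partial(q_{ij},p_{ij})$ vanish there and the $S^1$-symmetry gives $\dot I_{ij}=0$; hence in the coordinate frame the linearized time-$T$ flow of one factor is block-diagonal, with the $(\varphi,I)$-shear $\left(\begin{smallmatrix}1 & \omega^{\overline{1-j}}T\,\partial^2H_{ij}/\partial I_{ij}^2\\ 0 & 1\end{smallmatrix}\right)$ and a $(q,p)$-rotation whose frequency is $\Omega_{ij}$ by the isoenergetic condition $\det\partial^2H_{ij}/\partial(q_{ij},p_{ij})^2=\Omega_{ij}^2$; over time $T$ the transverse rotation is by $\omega^{\overline{1-j}}\Omega_{ij}T$. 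The factor $g^{-\alpha}_{I_0,I_1}$ is a uniform shift of all $\varphi_{ij}$, so its linearization is the identity, and by (\ref{maxrez'}) the transverse angle is $\equiv\alpha\pmod{2\pi}$; choosing the canonical frame with the orientation $\eta_{ij}$ of (\ref{eq:eta:ij}) turns it into the positive rotation by $\alpha_{ij}$. Restoring the coupling, $\langle F_{ij}\o\rangle$ perturbs the fast ($j\ge1$) transverse oscillator and, by the elementary computation $\sqrt{\det(A+\omega^2B)}$, shifts its frequency by $\tfrac{\omega^2\Omega_{ij}}{2}\Tr\!\big((\partial^2H_{ij})^{-1}\partial^2\langle F_{ij}\o\rangle\big)=\omega^2\Delta_{ij}$, which over $T$ accumulates to $\alpha_{ij}=\eta_{ij}(\alpha+\Delta_{ij}\omega^2T)+O(\omega^3T)$.

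The delicate, and I expect hardest, points are the exact value $\alpha_{i0}=\eta_{i0}\alpha$ for the planets and the one-sided off-diagonal structure; both reflect the fast/slow asymmetry. The same $O(\omega^2)$ coupling is a relative-$\omega^2$ perturbation of a fast satellite factor (producing the genuine secular diagonal shift $\Delta_{ij}$) but only a relative-$\omega$ perturbation of the slow planet factor. One must show that, after averaging over the fast satellite phase, the planet's transverse self-frequency gets no secular shift at order $\omega$ in angle, so that $\alpha_{i0}=\eta_{i0}\alpha$ holds with no $\Delta_{i0}$-term; and that, of the two cross-couplings between $M_{i0}$ and $M_{ij}$, only the block sending the satellite-directions into the planet-directions survives coherently over the long relative period $T\sim\omega^{-1}$ (it is exactly the resonance $\Omega_{ij}T\equiv\omega\Omega_{i0}T\equiv\alpha$ that lets it accumulate to size $O(\omega)$), while the reverse block averages to higher order. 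Carrying out this two-timescale bookkeeping with uniform error $O(\omega^3T)$ for $T$ as large as $O(\omega^{-1})$, and then exhibiting the bounded symplectic frames $e_{ijk}$ (the coordinate frame for $j=0$, its $O(\omega)$-correction for $j\ge1$) that realize the block-lower-triangular normal form, is the main obstacle — presumably the technical heart of ``Stage one'' and of corollary~\ref{cor:unic:basis}.
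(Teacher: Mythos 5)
First, a caveat about the comparison itself: the paper does not contain a proof of Theorem~\ref{th:mult} --- it is explicitly deferred to a future publication, and only the two-stage scheme of \Sec\ref{subsec:ideas} is given. Measured against that scheme, your overall plan (one averaging step to produce $\Lambda$, then a factor-by-factor analysis of the linearized succession map) is the right one, and your derivation of the diagonal blocks of the model system from the isoenergetic conditions (\ref{eq:isoen}), of the rotation angle $\omega^{\overline{1-j}}\Omega_{ij}T\equiv\alpha\pmod{2\pi}$ from the relative resonance, and of the shift $\Delta_{ij}$ from the trace formula (\ref{eq:Delta:ij}) is the correct computation.

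There is, however, a genuine conceptual gap: you treat the unperturbed system (\ref{eq:unpert}) as if the coupling $\omega^2\Phi$ acted on both the slow and the fast factors. It does not. Theorem~\ref{th:mult} concerns the \emph{$0$-Hamiltonian} system: since $H=\omega H_0$ projects to $M_0$ and the two defining relations of a $0$-Hamiltonian field are imposed separately on $\Ker\oomega_1$ and $\Ker\oomega_0$, the slow (planet) subsystem evolves by the pure flow of $\omega H_0$, \emph{exactly} decoupled from $\Phi$; only the fibre (satellite) dynamics feels $\omega^2\Phi$, through the base point. This single observation dissolves both of the points you single out as ``the main obstacle'': (i) $\alpha_{i0}=\eta_{i0}\alpha$ is exact because the planet diagonal blocks are literally the Kepler monodromy, so no secular-shift estimate is needed; (ii) the off-diagonal blocks in one direction vanish \emph{identically}, which is what the theorem asserts --- your proposed route (``the reverse block averages to higher order'') would only give smallness, not vanishing. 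Moreover you have the direction of the surviving coupling backwards: the semidirect-product structure (together with the ordering $j=0$ first in $\prod_{i=1}^n\prod_{j=0}^{n_i}M_{ij}$, which is what makes the matrix lower-triangular) means the nonzero off-diagonal blocks are the derivatives of the satellites' final states with respect to the planets' initial states, whereas you argue that ``only the block sending the satellite-directions into the planet-directions survives''; that block is identically zero, and no resonance mechanism is needed for the nonvanishing of the other one --- it is nonzero simply because $F_{ij}$ depends on the planet variables. For the same reason your explanation of the $O(\omega)$- versus $O(\omega^2)$-closeness of $\Lambda$ via an $O(\omega)$ displacement of the planet equilibria cannot be right: the planet circles do not move at all in the unperturbed system, so the dichotomy governed by $d\langle F_{ij}\o\rangle(0,0)$ must be located entirely in the satellite factors.
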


Let us explain why the matrix in Theorem \ref {th:mult} is 
blockwise lower-triangular (instead of blockwise diagonal).
Unlike to the model system (\ref {eq:model}), which is a direct product, the unperturbed system (\ref {eq:unpert}) is only a semi-direct product. Namely: it ``projects'' to each factor $M_{i0}$ and to each factor $M_{i0}\times M_{ij}$, but (unlike to the model system) does not ``projects'' to the factors of the form $M_{ij}$, $1\le i\le n$, $1\le j\le n_i$. 
This observation has the following unexpected consequence: although each diagonal block of the indicated matrix is symplectic, the whole matrix is not symplectic. Indeed: the linear operator under consideration leaves invariant each subspace 
$V_i:=\bigoplus\limits_{j=0}^{n_i}T_{x_{ij}}M_{ij}$, $1\le i\le n$, as well as each its subspace $T_{x_{ij}}M_{ij}$, $1\le j\le n_i$, moreover all the subspaces $T_{x_{ij}}M_{ij}$, $0\le j\le n_i$, are symplectic and pairwise skew-orthogonal. This implies that, if the operator whould be symplectic, then it should leave invariant the subspace $T_{x_{i0}}M_{i0}$ too (since this subspace coincides with the skew-orthogonal complement in $V_i$ of the invariant subspace $\bigoplus\limits_{j=1}^{n_i}T_{x_{ij}}M_{ij}$). But the latter would mean that the matrix of the operator would be blockwise diagonal, which is false, since it is blockwise lower-triangular only, as explained above.

Notice that the diagonal blocks in Theorem \ref {th:mult} have the form $\exp(\omega^{\overline{1-j}}T V_{ij})$ where 
$$
V_{ij}=
 \left(
 \begin{array} {cccc}
   0& \frac{\partial^2H_{ij}}{\partial I_{ij}^2}(I_{ij}\o,0,0)& 0&              0\\
   0& 0&                                            0&              0\\
   0& 0&                                            0& \widehat\Omega_{ij}\\
   0& 0&                             -\widehat\Omega_{ij}&              0
 \end{array}
\right),
 \qquad \begin{array}{l}1\le i\le n, \smallskip\\ 0\le j\le n_i,\end{array}
$$
$\widehat\Omega_{i0}=\eta_{i0}\Omega_{i0}$,
$|\widehat\Omega_{ij}-\eta_{ij}(\Omega_{ij}+\Delta_{ij}\somega^2)|\le
C_2\somega^3$ и $\alpha_{ij}=\widehat\Omega_{ij}T\pmod{2\pi}$, $1\le
j\le n_i$.

%

{\it Stage two} of the proof of Theorems \ref {th1'} and \ref {th:stab} 
is based on generalizing the ``method of generating function'' (which was initially \cite {1,9,16,29,KN} introduced for Hamiltonian systems) to the case of an $\varepsilon$-Hamiltonian
(``perturbed'') system that is $C^2$-close to a 0-Hamiltonian
(``unperturbed'') system. In this method, one studies $T$-periodic trajectories of the
perturbed system in a neighbourhood of a ``nondegenerate'' compact
submanifold $\Lambda$ formed by the phase trajectories of
$(T,\alpha)$-periodic solutions of the unperturbed system. Namely, 
one proves that the intersections points of $(T,\alpha)$-periodic trajectories of
the perturbed system with the ``cross section'' $\Sigma$ (see
the formulation of theorem \ref {th1'}) coincide with critical points
of the function $\Psi|_{\Sigma\cap\widetilde\Lambda}$. 
Since this function is defined on a $N-2$-dimensional torus $\Sigma\cap\widetilde\Lambda$, the number of its critical points is at least $N-1$, morever it is at least $2^{N-2}$ when counting with multiplicities \cite {35}. This gives us the required estimate (formulated in Theorem \ref {th1'}) for the number of $(T,\alpha)$-periodic trajectories of the perturbed system.
Here $\Psi$ is the generating function (cf.\ Definition \ref {def:gen:func} below) of the perturbed succession map $\widetilde A=g_{\widetilde H,\widetilde H_1+\somega^2\widetilde \Phi}^{T} g_{\widetilde I,I_1}^{-\alpha}:M\to M$, moreover $\widetilde\Lambda\subset M$ is a submanifold that is $C^1$-close to the submanifold $\Lambda$.
A proof of Theorem \ref {th:stab} on orbital stability in linear approximation is based on some properties (and their ``preservation'' under perturbations) of the linarization of the unperturbed ``succession map'' $A$ at points of the torus $\Lambda$ (cf.\ \cite[\S3.3.2]{kudr:diss} for details).

\begin{scDef} [(generating function)] \label {def:gen:func}
Let $\varepsilon>0$ and $A:M\to M$ be a symplectic self-map of a
symplectic manifold $(M,\oomega)=(M_0\times
M_1,\oomega_0+\varepsilon\oomega_1)$. Denote $v_{ij1}:=\varphi_{ij}$,
$v_{ij2}:=q_{ij}$ (``coordinates''), $u_{ij1}:=I_{ij}$,
$u_{ij2}:=p_{ij}$ (``momenta''). Denote by $\alpha$ the
differential 1-form of the type of $(\u'-\u)d\v+(\v-\v')d\u'$ on $M$.
More precisely, we define $\alpha$ by the formula
 $$
 \left.
\alpha(\pnt):=
 \sum^n_{i=1} \sum_{k=1}^2
 \right( \left( u_{i0k}'-u_{i0k}\right)dv_{i0k}+\left(v_{i0k}-v_{i0k}'\right)du_{i0k}'+
 $$
 $$
 \left. + \varepsilon\sum^{n_i}_{j=1}
       \left( \left(u_{ijk}'-u_{ijk}\right)dv_{ijk}+\left(v_{ijk}-v_{ijk}'\right)du_{ijk}' \right) \right)
 $$
where $A:\pnt=(\v,\u)\mapsto A(\pnt)=:(\v',\u')$. In other words,
 \begin{equation} \label {Psi**}
 \alpha(\pnt)\xi:=\oomega(A(\pnt)-\pnt,dP(\pnt)\xi), \qquad \xi\in T_\pnt M,
 \end{equation}
where $P:\pnt=(\v,\u)\mapsto P(\pnt):=(\v,\u')$, and $\oomega(\xi,\eta)$
denotes the value of the symplectic structure
$\oomega=\sum_{i=1}^n\sum_{k=1}^2(du_{i0k}\wedge
dv_{i0k}+\varepsilon\sum_{j=1}^{n_i}du_{ijk}\wedge dv_{ijk})$ on the
pair of vectors $\xi,\eta\in T_\pnt M$. A function $\Psi=\Psi(\pnt)$
will be called a {\it generating function} of the map $A$ if 
 \begin{equation} \label {Psi*}
 d\Psi(\pnt)=\alpha(\pnt), \qquad \pnt\in M.
 \end{equation}
Let us show that such a function $\Psi$ exists, i.e.\ the form
$\alpha$ is exact. One easily shows that the integral of the form
$\alpha$ along any closed curve equals the integral of the symplectic
structure $\oomega$ along some two-dimensional torus. The latter
integral vanishes, since the symplectic structure under consideration
is exact (being the standard symplectic structure on $M=T^*Q$). Thus,
the integral of the form $\alpha$ along any closed curve vanishes.
This proves that the function $\Psi$ is well-defined up to an
additive constant.
\end{scDef}

Let us derive, via the method of a generating function, Theorem \ref {th1'} from the technical Theorem \ref {th:mult}. For this, we will firts (in Steps 1--3) construct a torus $\widetilde\Lambda\subset M$ and show that the generating function $\Psi$ of the ``perturbed succession map'' 
 \begin{equation} \label {eq:tildeA}
\widetilde A := g_{\widetilde H,\widetilde H_1+\somega^2\widetilde \Phi}^{T} g_{\widetilde I,I_1}^{-\alpha} : M\to M, \qquad \pnt=(\vvarphi,\I,\q,\p)\mapsto
(\widetilde\vvarphi,\widetilde\I,\widetilde\q,\widetilde\p),
 \end{equation}
possesses the property
 \begin{equation} \label {eq:dPsi}
d\Psi(\pnt)= \sum^n_{i=1} 
 \left( \left( \widetilde I_{i0}-I_{i0}\right) d\varphi_{i0} + \varepsilon\sum^{n_i}_{j=1}
        \left( \widetilde I_{ij}-I_{ij}\right) d\varphi_{ij} 
 \right), \qquad \pnt\in\widetilde\Lambda.
 \end{equation}
After that (in Steps 4 and 5), we will justify the main idea of the method, namely: we will show that, for small enough perturbation and $\varepsilon\ne0$, all critical points of the function $\Psi|_{\Sigma\cap\widetilde\Lambda}$ are fixed under the map $\widetilde A$ (i.e., they are intersection points of $(T,\alpha)$-periodic trajectories of the perturbed system with the ``cross section'' $\Sigma$). Here, the standard transversality argument does not work (because the factor $\varepsilon$ in (\ref {eq:dPsi}) is small, cf.\ Remark \ref {rem:transvers} below), however we manage to adapt it.

{\it Step 1.} Let us derive from technical Theorem \ref {th:mult} that the torus $\Lambda$ is nondegenerate in the following sense (\ref {eq:nondegL}). Let us write the ``unperturbed succession map'' $A:=g_{H_0,H_1+\somega^2\Phi}^{T} g_{I_0,I_1}^{-\alpha}:M\to M$ in the following form in the coordinates under consideration:
 $$
A = g_{H_0,H_1+\somega^2\Phi}^{T} g_{I_0,I_1}^{-\alpha} : M\to M, \qquad \pnt=(\vvarphi,\I,\q,\p)\mapsto(\vvarphi',\I',\q',\p'),
 $$ 
where $(\vvarphi',\I',\q',\p')=(\vvarphi'(\vvarphi,\I,\q,\p),\I'(\vvarphi,\I,\q,\p),\q'(\vvarphi,\I,\q,\p),\p'(\vvarphi,\I,\q,\p))$. By nondegeneracy of the torus $\Lambda$, we mean that, at every point $\pnt=(\vvarphi,\I,\q,\p)\in\Lambda$, the following determinant does not vanish:
 \begin{equation} \label{eq:nondegL}
\det\frac{\partial(\vvarphi'-\vvarphi,\q'-\q,\p'-\p)}{\partial(\I,\q,\p)} \ne0, \qquad (\vvarphi,\I,\q,\p)\in\Lambda,
 \end{equation}
i.e., the Jacobi matrix of the map $(\I,\q,\p)\mapsto(\vvarphi'-\vvarphi,\q'-\q,\p'-\p)$ at the point $(\I,\q,\p)$ is nondegenerate.

The decomposition $M=\prod\limits_{i=1}^n\prod\limits_{j=0}^{n_i}M_{ij}$ gives us an isomorphism $T_\pnt M=\bigoplus\limits_{i=1}^n\bigoplus\limits_{j=0}^{n_i}T_{\pnt_{ij}}M_{ij}$, where $p_{ij}:M\to M_{ij}$ is the projection, $\pnt_{ij}:=p_{ij}(\pnt)$. Recall that the vectors $\frac{\partial}{\partial\varphi_{ij}},\frac{\partial}{\partial I_{ij}}$, $\frac{\partial}{\partial q_{ij}},\frac{\partial}{\partial p_{ij}}$ form a basis of the tangent space $T_{\pnt_{ij}}M_{ij}$. Since the unperturbed system ``projects'' to each factor $M_{i0}$ and to each factor $M_{i0}\times M_{ij}$, we conclude that the matrix in (\ref {eq:nondegL}) is blockwise lower-triangular with diagonal blocks $\frac{\partial(\varphi_{ij}'-\varphi_{ij},q'_{ij}-q_{ij},p'_{ij}-p_{ij})}{\partial(I_{ij},q_{ij},p_{ij})}$. Therefore, the nondegeneracy condition (\ref {eq:nondegL}) is equivalent to nondegeneracy of each of these blocks.

Let $j=0$. By thechnical Theorem \ref {th:mult}, the block $\frac{\partial(\varphi_{i0}',q'_{i0},p'_{i0})}{\partial(I_{i0},q_{i0},p_{i0})}$ is conjugated to the corresponding minor of the corresponding block from the same theorem (since if $j=0$ then the bases $e_{i01},e_{i02},e_{i03},e_{i04}$ and $\frac{\partial}{\partial\varphi_{i0}},\frac{\partial}{\partial I_{i0}},\frac{\partial}{\partial q_{i0}},\frac{\partial}{\partial p_{i0}}$ differ from each other just by
changing the last two vectors by their linear combinations).
From the explicit form of this block, we obtain that $\det\frac{\partial(\varphi_{i0}'-\varphi_{i0},q'_{i0}-q_{i0},p'_{i0}-p_{i0})}{\partial(I_{i0},q_{i0},p_{i0})}\ne0$ (since $\alpha\not\in2\pi\bbZ$ due to the nondegeneracy condition (\ref {eq:nondeg}) or to the more delicate condition (\ref {eq:nondeg'})). 

Let now $1\le j\le n_i$. Suppose that $\det\frac{\partial(\varphi_{ij}'-\varphi_{ij},q'_{ij}-q_{ij},p'_{ij}-p_{ij})}{\partial(I_{ij},q_{ij},p_{ij})}=0$. Then (because the subspace $T_{\pnt_{ij}}M_{ij}\subset T_\pnt M$ is invariant under the operator $dA(\pnt)$ for $j>0$) there exists a nontrivial linear combination $\xi_{ij}\in T_{\pnt_{ij}}M_{ij}$ of vectors $\frac{\partial}{\partial I_{ij}},\frac{\partial}{\partial q_{ij}},\frac{\partial}{\partial p_{ij}}$ such that the vector $dA(\pnt)\xi_{ij}-\xi_{ij}$ is proportional to the vector $\frac{\partial}{\partial I_{ij}}$. On the other hand, it follows from the technical Theorem \ref {th:mult} that any vector of the form $dA(\pnt)\eta_{ij}-\eta_{ij}$ (with $j>0$ and $\eta_{ij}\in T_{\pnt_{ij}}M_{ij}$) is a linear combination of vectors $e_{ij1},e_{ij3},e_{ij4}$. But (for small enough $|\fomega|$) such a linear combination can not be proportional to the vector $\frac{\partial}{\partial I_{ij}}$, since, due to the same theorem, the vectors $e_{ij1},\frac{\partial}{\partial I_{ij}},e_{ij3},e_{ij4}$ are linearly independent (and form a basis of the space $T_{\pnt_{ij}}M_{ij}$). Hence $dA(\pnt)\xi_{ij}-\xi_{ij}=0$, i.e.\ the vector $\xi_{ij}$ is fixed under the map $dA(\pnt)$. Now, it follows from the same theorem that any fixed vector of the operator $dA(\pnt)|_{T_{\pnt_{ij}}M_{ij}}$ is proportional to the vector $e_{ij1}$ (since $\alpha_{ij}\not\in2\pi\bbZ$ due to the nondegeneracy condition (\ref {eq:nondeg}) or (\ref {eq:nondeg'})). 
But (for small enough $|\fomega|$) such a vector can not be a linear combination of the vectors $\frac{\partial}{\partial I_{ij}},\frac{\partial}{\partial q_{ij}},\frac{\partial}{\partial p_{ij}}$, since, by the same theorem, the vectors $e_{ij1},\frac{\partial}{\partial I_{ij}},\frac{\partial}{\partial q_{ij}},\frac{\partial}{\partial p_{ij}}$ are linearly independent (and form a basis of the subspace $T_{\pnt_{ij}}M_{ij}$). Therefore $\xi_{ij}=0$, a contradiction. This proves the inequality $\det\frac{\partial(\varphi_{ij}'-\varphi_{ij},q'_{ij}-q_{ij},p'_{ij}-p_{ij})}{\partial(I_{ij},q_{ij},p_{ij})}\ne0$, and hence (\ref {eq:nondegL}).

{\it Step 2.} Now let us describe a construction of the torus $\widetilde\Lambda\subset M$. Let the ``perturbed succession map'' $\widetilde A:M\to M$ has the form (\ref {eq:tildeA}) in the coordinates under consideration, where $(\widetilde\vvarphi,\widetilde\I,\widetilde\q,\widetilde\p)=(\widetilde\vvarphi(\vvarphi,\I,\q,\p),\widetilde\I(\vvarphi,\I,\q,\p),\widetilde\q(\vvarphi,\I,\q,\p),\widetilde\p(\vvarphi,\I,\q,\p))$. Define a set $\widetilde\Lambda\subset M$ as the following set of points near $\Lambda$:
 \begin{equation} \label {eq:tildeL}
 \widetilde\Lambda := \{(\vvarphi,\I,\q,\p) \mid (\widetilde\vvarphi(\vvarphi,\I,\q,\p),\widetilde\q(\vvarphi,\I,\q,\p),\widetilde\p(\vvarphi,\I,\q,\p)) = (\vvarphi,\q,\p) \}.
 \end{equation}

It follows from the Implicit Functions Theorem and from the nondegeneracy condition (\ref {eq:nondegL}) that, for a small enough size of perturbation $\delta\ge|\varepsilon|\ge0$ (here $\delta:=|\varepsilon|+|\mu|+|\nu|+|\rho|$ in the case of the $N+1$ body problem), the subset $\widetilde\Lambda\subset M$ has the form of a graph
 \begin{equation} \label {eq:tildeL:graph}
\widetilde\Lambda=\{(\vvarphi,\I(\vvarphi),\q(\vvarphi),\p(\vvarphi))\mid\vvarphi\in(\bbR/2\pi\bbZ)^N\}
 \end{equation}
for some smooth functions $\I(\vvarphi),\q(\vvarphi),\p(\vvarphi)$, i.e., it is a $N$-dimensional torus that depends smoothly on $\varepsilon$ and other small parameters of the system (i.e., on $\varepsilon,\mu,\nu,\rho$ in the case of the $N+1$ body problem with fixed $m_i,m_{ij}$), as well as on the parameters $\somega,T>0$. The torus $\widetilde\Lambda$ is $O(\delta)$-close to the torus $\Lambda$, due to the Implicit Functions Theorem and $O(\delta)$-closeness of the ``perturbed'' map $\widetilde A=g_{\widetilde H,\widetilde H_1+\somega^2\widetilde \Phi}^{T} g_{\widetilde I,I_1}^{-\alpha}$ to the unperturbed one $A=g_{H_0,H_1+\somega^2\Phi}^{T} g_{I_0,I_1}^{-\alpha}$. From now on, we mean by $O(1)$ real numbers whose absolute values do not exceed a positive value depending continuously on $\somega,T>0$ and the integers $k_{ij}\in\bbZ$.

{\it Шаг 3.} Let $\Psi$ be a generating function of the map $\widetilde A$, cf.\ (\ref {eq:tildeA}), (\ref {eq:gen:funct}), (\ref {Psi*}). It follows from the construction of the torus $\widetilde\Lambda$ (cf.\ Step 2) and the Implicite Functions Theorem that the map $\widetilde A$ shifts each point $\pnt\in\widetilde\Lambda$ by a vector having the form
 \begin{equation} \label {eq:deltaA:small}
\widetilde A(\pnt)-\pnt =(0,\widetilde\I-\I,0,0) =O(\delta), \qquad \pnt=(\vvarphi,\I,\q,\p)\in\widetilde\Lambda.
 \end{equation}
Here we wrote that the size of the shift has the same order $O(\delta)$ as the order of perturbation, since if $\delta=0$ then $\widetilde\Lambda=\Lambda$ and $A(\pnt)=\pnt$ for any point $\pnt\in\Lambda$. Therefore $d\Psi(\pnt)$ has the form (\ref {eq:dPsi}) at every point $\pnt\in\widetilde\Lambda$.

Therefore, if $\varepsilon\ne0$ then a point $\pnt\in\widetilde\Lambda$ is a critical point of the generating function $\Psi$ if and only if it is fixed under the map $\widetilde A$ (i.e., this point is contained in the phase orbit of a $(T,\alpha)$-periodic solution of the perturbed system). 

It remains to show that, if $\varepsilon\ne0$ (and the perturbation is small), then any critical point of the function $\Psi|_{\Sigma\cap\widetilde\Lambda}$ is also fixed under the map $\widetilde A$.

{\it Step 4.} Let $\varepsilon\ne0$ and a point $\pnt\in\Sigma\cap\widetilde\Lambda$ is  a critical point of the function $\Psi|_{\Sigma\cap\widetilde\Lambda}$. This means that the covector (\ref {eq:dPsi}) is a linear combination of the covectors $\sum_{i=1}^n\sum_{j=0}^{n_i}d\varphi_{ij}$ and $\sum_{i=1}^n\sum_{j=0}^{n_i}\somega^{\overline{1-j}}\Omega_{ij}d\varphi_{ij} = \sum_{i=1}^n(\somega\Omega_{i0}d\varphi_{i0}+\sum_{j=1}^{n_i}\Omega_{ij}d\varphi_{ij})$ (since the plane $\Sigma$ is a common level set of the functions $\sum_{i=1}^n\sum_{j=0}^{n_i}\varphi_{ij}$ and $\sum_{i=1}^n\sum_{j=0}^{n_i}k_{ij}\varphi_{ij}$). By denoting the coefficients of this linear combination by $\lambda_1,\lambda_2\in\bbR$, we obtain $\widetilde I_{i0}-I_{i0}=\lambda_1+\lambda_2\somega\Omega_{i0}$, 
$\widetilde I_{ij}-I_{ij}=\varepsilon^{-1}(\lambda_1+\lambda_2\Omega_{ij})$ for $1\le i\le n$, $1\le j\le n_i$.

Thus, the map $\widetilde A$ shifts our point $\pnt$ by the vector
 \begin{equation} \label {eq:deltaA}
\widetilde A(\pnt)-\pnt =(0,\widetilde\I-\I,0,0)=\lambda_1\V_1+\lambda_2\V_2.
 \end{equation}
Here we denoted 
$$
\V_1:=\sum_{i=1}^n\left(\frac{\partial}{\partial I_{i0}}+\varepsilon^{-1}\sum_{j=1}^{n_i}\frac{\partial}{\partial I_{ij}}\right), \quad 
\V_2:=\sum_{i=1}^n\left(\somega\Omega_{i0}\frac{\partial}{\partial I_{i0}}+\varepsilon^{-1}\sum_{j=1}^{n_i}\Omega_{ij}\frac{\partial}{\partial I_{ij}}\right).
$$ 
We have to show that $\lambda_1=\lambda_2=0$ in (\ref {eq:deltaA}).

{\it Step 5.} Since the Hamilton functions  
$\widetilde H=\somega\widetilde H_0+\varepsilon(\widetilde H_1+\somega^2\widetilde\Phi)$ and $\widetilde I=I_0+\varepsilon I_1$ of our perturbed system and of the $S^1$-action are preserved by the map $\widetilde A$, we have $\widetilde I(\widetilde A(\pnt))-\widetilde I(\pnt)=0$ and $\widetilde H(\widetilde A(\pnt))-\widetilde H(\pnt)=0$. 
From this conservation laws, taking into account the inclusion $\pnt\in\widetilde\Lambda$, the relation (\ref {eq:deltaA:small}) and $O(\delta)$-closeness of the torus $\widetilde\Lambda$ to $\Lambda$ (Step 2), we conclude that $\sum_{i=1}^n(\widetilde I_{i0}-I_{i0})+\varepsilon\sum_{i=1}^n\sum_{j=1}^{n_i}(\widetilde I_{ij}-I_{ij})=0$ and $\sum_{i=1}^n(\somega\Omega_{i0}+O(\delta))(\widetilde I_{i0}-I_{i0})+\varepsilon\sum_{i=1}^n\sum_{j=1}^{n_i}(\Omega_{ij}+O(\delta))(\widetilde I_{ij}-I_{ij})=0$. Due to (\ref {eq:deltaA}), we conclude that the pair of real numbers $\lambda_1,\lambda_2\in\bbR$ satisfies the following system of two linear relations that do not involve $\varepsilon$:
 $$
\lambda_1N+\lambda_2\sum_{i,j}\somega^{\overline{1-j}}\Omega_{ij}=0,\ 
\lambda_1\sum_{i,j}\somega^{\overline{1-j}}\Omega_{ij} +\lambda_2\sum_{i,j}(\somega^{\overline{1-j}}\Omega_{ij})^2=O((|\lambda_1|+|\lambda_2|)\delta),
 $$
where $\sum_{i,j}:=\sum_{i=1}^n\sum_{j=0}^{n_i}$. We can rewrite this system in the form
 $$
\lambda_1 \langle \V\o_1,\V\o_1 \rangle +\lambda_2 \langle \V\o_1,\V\o_2 \rangle=0, \qquad 
\lambda_1 \langle \V\o_1,\V\o_2 \rangle +\lambda_2 \langle \V\o_2,\V\o_2 \rangle=:B=O((|\lambda_1|+|\lambda_2|)\delta).
 $$
Here $\langle \cdot , \cdot \rangle$ denotes the standard scalar product in $\bbR^N$, and the vectors
$$
\V\o_1:=\sum_{i=1}^n\left(\frac{\partial}{\partial\varphi_{i0}}+\sum_{j=1}^{n_i}\frac{\partial}{\partial\varphi_{ij}}\right), \quad 
\V\o_2:=\sum_{i=1}^n\left(\somega\Omega_{i0}\frac{\partial}{\partial\varphi_{i0}}+\sum_{j=1}^{n_i}\Omega_{ij}\frac{\partial}{\partial\varphi_{ij}}\right)
$$
do not depend on $\varepsilon$. Since the vectors $\V\o_1,\V\o_2$ are noncollinear (due to (\ref {maxrez'}) and $N\ge2$), the matrix of the latter linear system is nondegenerate, and its solution is unique:
$$
\lambda_1 = \frac{ - \langle \V\o_1,\V\o_2 \rangle B }
 {\langle \V\o_1,\V\o_1 \rangle\langle \V\o_2,\V\o_2 \rangle - \langle \V\o_1,\V\o_2 \rangle^2}, \qquad 
\lambda_2 = \frac{\langle \V\o_1,\V\o_1 \rangle B }
 {\langle \V\o_1,\V\o_1 \rangle\langle \V\o_2,\V\o_2 \rangle - \langle \V\o_1,\V\o_2 \rangle^2}.
$$
Therefore $|\lambda_1|+|\lambda_2|=O(B)=O((|\lambda_1|+|\lambda_2|)\delta)$ when $\delta\to0$. Hence, if the perturbation is small enough (i.e.\ $\delta\to0$) then $\lambda_1=\lambda_2=0$ in (\ref {eq:deltaA}), i.e.\ $\widetilde A(\pnt)=\pnt$ as required. Theorem \ref {th1'} is completely proved. \qed

\begin{scRem} \label {rem:transvers}
Let us explain the following: (a) a geometric meaning of the vectors $\V_1\o,\V_2\o$ and the equality (\ref {eq:deltaA}); (b) how one could use these vectors for proving in a geometric manner (by means of a ``transversality argument'')
the key idea of the generating function method (cf.\ Steps 4--5 above), if a small factor $\varepsilon$ would not enter the ``fast'' components of the co-vector (\ref {eq:dPsi}); (c) why is the standard ``transversality argument'' not working for systems with ``slow'' and ``fast'' variables.

(a) The vectors $\V_1\o,\V_2\o$ are nothing else than the values at the point $\pnt\o\in\Lambda\o$ of the 0-Hamiltonian vector fields corresponding to our $S^1$-action and model system. These vectors are non-collinear, since, by assumption of Theorem \ref {th1'}, one has $N\ge2$ and the integers $k_{ij}\in\bbZ$ are not simultaneously $0$ (due to (\ref {maxrez'})). By construction, the cross-section $\Sigma$ is the orthogonal complement to the plane $\Span(\V_1\o,\V_2\o)$ spanned by the vectors $\V_1\o,\V_2\o$. 
Therefore, the fact that a point $\pnt\in\Sigma\cap\widetilde\Lambda$ is a critical point of the function $\Psi|_{\Sigma\cap\widetilde\Lambda}$ means that the gradient of the function $\Psi$ at the point $\pnt$ belongs to the plane $\Span(\V_1\o,\V_2\o)$. But, due to (\ref {eq:dPsi}), this gradient equals $\sum_{i=1}^n(\widetilde I_{i0}-I_{i0})\frac{\partial}{\partial\varphi_{i0}}+\varepsilon\sum_{i=1}^n\sum_{j=1}^{n_i}(\widetilde I_{ij}-I_{ij})\frac{\partial}{\partial\varphi_{ij}}$, i.e., it has the same coordinates as the vector of shifting (\ref {eq:deltaA:small}), up to a factor $\varepsilon$ and a transposition of the basis vectors $\frac{\partial}{\partial\varphi_{ij}}$ and $\frac{\partial}{\partial I_{ij}}$. Hence, the shift vector belongs to the plane $\Span(\V_1,\V_2)$, i.e.\ (\ref {eq:deltaA}) holds. On the other hand, due to the conservation laws (cf.\ Step 5 above), the endpoints of the shift vector (i.e., the points $\pnt$ and $\widetilde A(\pnt)$) belong to the same common level set $\widetilde H^{-1}(a)\cap\widetilde I^{-1}(b)$ of the functions $\widetilde H,\widetilde I$. We want to manage to conclude that such a shift has to be trivial, i.e.\ $\widetilde A(\pnt)=\pnt$.

(b) The standard transversality argument is as follows. Suppose that, firstly, $N>n$ (i.e., the ``fast'' variables and a small factor $\varepsilon$ are present), secondly, the gradients of the ``unperturbed'' functions $H_0,I_0$ at the point $\pnt\o$ are non-collinear, thirdly, the ``unperturbed'' vectors $\lim_{\varepsilon\to0}(\varepsilon\V_1),\lim_{\varepsilon\to0}(\varepsilon\V_2)$ are also non-collinear, and fourthly, the ``unperturbed'' plane $\Span(\lim_{\varepsilon\to0}(\varepsilon\V_1),\lim_{\varepsilon\to0}(\varepsilon\V_2))$ and the common level set $H_0^{-1}(a\o)\cap I_0^{-1}(b\o)$ are transversal (in fact, the first three assumptions can be fulfilled simultaneously under the hypotheses of Theorem \ref {th1'}, but the first and fourth assumptions can not be fulfilled simultaneously, see below). Then, for small enough perturbation, the ``perturbed'' plane $\Span(\V_1,\V_2)$ and the common level set $\widetilde H^{-1}(a)\cap\widetilde I^{-1}(b)$ are also transversal. Hence, due to (a), the shift vector is zero, i.e.\ $\widetilde A(\pnt)=\pnt$ as required. However our first and fourth assumptions can not be fulfilled simultaneously, since the ``unperturbed'' common level set contains the indicated ``unperturbed'' plane, and hence they can not be transversal. 

(c) Let us summarize. The standard transversality argument can not be applied in our case (when ``fast'' variables are present). The reason is that the small factor $\varepsilon$ enters 
the co-vectors $d\widetilde H,d\widetilde I$ and the vectors $\varepsilon\V_1,\varepsilon\V_2$ in an inconsistent manner. Namely, it enters the ``fast'' components of the co-vectors $d\widetilde H,d\widetilde I$, but the ``slow'' components of the vectors $\varepsilon\V_1,\varepsilon\V_2$.
\end{scRem}

{\it Stage two} of the proof of Theorem \ref {th:sym} (on symmetric $(T,\alpha)$-periodic solutions) can be performed either by the method of Tkhay \cite {Thay} (based on the Implicit Functions Theorem and a simple refinement of Lemma \ref {lem:mult} and Theorem \ref {th:mult} for reversible systems, without using a generating function), or by the following method by Krassinsky \cite {11} (based on Theorem \ref {th1'} and the explicit construction (\ref {eq:tildeL}) of the torus $\widetilde\Lambda$).

Let us prove first that the function $\Psi|_{\widetilde\Lambda}$ is an even function in the variables $\vvarphi|_{\widetilde\Lambda}$. Since the involution $\tau$ is anti-canonical, it maps phase trajectories to phase trajectories with reversing the time. In particular, it maps the segment of the phase trajectory from any point $\pnt$ to the point $\widetilde A(\pnt)$ into the segment of a phase trajectory from the point $\widetilde\pnt:=\tau(\widetilde A(\pnt))$ to the point $\tau(\pnt)$ with reversing the time, therefore $\tau(\pnt)=\widetilde A(\widetilde\pnt)$.
On the other hand, due to (\ref {eq:tildeL}), any point $\pnt=(\vvarphi,\I,\q,\p)\in\widetilde\Lambda$ is sent by the map $\widetilde A$ to a point of the form $\widetilde A(\pnt)=(\vvarphi,\widetilde\I,\q,\p)$. Hence, we conclude from Definition of the involution $\tau$ that $\widetilde\pnt=\tau(\widetilde A(\pnt))=(-\vvarphi,\widetilde\I,\q,-\p)$ and $\widetilde A(\widetilde\pnt)=\tau(\pnt)=(-\vvarphi,\I,\q,-\p)$, therefore $\widetilde\pnt\in\widetilde\Lambda$ (см.\ (\ref {eq:tildeL})). Now, we obtain from (\ref {eq:dPsi}) the equality of co-vectors $d(\Psi|_{\widetilde\Lambda})(\pnt)=-d(\Psi|_{\widetilde\Lambda})(\widetilde\pnt)$ with respect to the variables $\vvarphi|_{\widetilde\Lambda}$ on the torus $\widetilde\Lambda$.
Since (in these variables) the points $\pnt$ and $\widetilde\pnt$ have coordinates $\vvarphi$ and $-\vvarphi$, the differential of the function $\Psi|_{\widetilde\Lambda}=\Psi|_{\widetilde\Lambda}(\vvarphi)$ has the form $d(\Psi|_{\widetilde\Lambda})(\vvarphi)=-d(\Psi|_{\widetilde\Lambda})(-\vvarphi)=d(\Psi|_{\widetilde\Lambda}(-\vvarphi))$. Hence, the function $\Psi|_{\widetilde\Lambda}(\vvarphi)-\Psi|_{\widetilde\Lambda}(-\vvarphi)$ is constant on the torus $\widetilde\Lambda$. Since this function equals $0$ at the point $\vvarphi=0$, it equals $0$ everywhere, i.e.\ the function $\Psi|_{\widetilde\Lambda}(\vvarphi)$ is even.

Since the function $\Psi|_{\widetilde\Lambda}(\vvarphi)$ is even, we conclude that $d(\Psi|_{\widetilde\Lambda})(0)=0$. Hence, by Theorem \ref {th1'}, the solution of the ``perturbed'' system with initial condition of the form $\pnt^0=(0,\I^0,\q^0,\p^0)\in\widetilde\Lambda$ is $(T,\alpha)$-periodic, i.e.\ $\widetilde A(\pnt^0)=\pnt^0$. Let us show that this solution is symmetric. It sufficies to show that $\tau(\pnt^0)=\pnt^0$ (см.\ \S \ref {par:sym}). As we proved above, $\tau(\pnt^0)=\tau(\widetilde A(\pnt^0))=(0,\widetilde\I^0,\q^0,-\p^0)\in\widetilde\Lambda$. Thus, both points $\tau(\pnt^0)$ and $\pnt^0$ belong to $\widetilde\Lambda$ and have the same coordinaet $\vvarphi=0$. Hence these points coincide (since $\widetilde\Lambda$ has the form (\ref {eq:tildeL:graph})), which proves that the solution under consideration is symmetric. In a similar way, one proves that any solution of the ``perturbed'' system with initial condition of the form $\pnt^\beta=(\beta,\I^\beta,\q^\beta,\p^\beta)\in\widetilde\Lambda$ is also relatively-periodic and symmetric, where $\beta\in\{0,\pi\}^N$.
Theorem \ref {th:sym} is completely proved.

{\it Stage two} of the proof of Theorem \ref {th:nondeg} (on ``gaps'' in the families of relatively-periodic solutions) is based on the averaging method on a submanifold \cite[theorem 11.1]{9}, \cite[theorems 2, 4]{29}. 
We completely prove (by this method) Theorems \ref {th:nondeg} and \ref {th:degen:sym} in the important special case: for planetary systems without satellites (cf.\ \Sec\ref {subsec:3body:}, proof of Corollary \ref {cor:3bodies}($\nexists$), and \Sec\ref {subsec:any}).

\section {Relatively-periodic solutions, symmetricity, stability and nondegeneracy} \label {par3:1}

Consider the problem about the motion of a system of $N+1$ particles
attracting by Newton's law on a Euclidean plane $E^2$, $N\ge2$. 
The equations of the motion have the form (\ref {O}),
(\ref {pot}).

\subsection {Relatively-periodic solutions} \label {par3:1:1}

Let us state the problem on finding $(T,\alpha)$-periodic solutions of the system, cf.\ Definition \ref {def:per}.

The motions in a rotating coordinate system with angular velocity $\fomega_1$ are described by the Hamiltonian system with the Hamilton fumction $H-\fomega_1I$, where
 \begin{equation} \label {M}
I:=\sum^N_{i=0}[\M_i,\p_i]
 \end{equation}
is the ``area integral'', also called the {\it kinetic moment}~\cite {12}. Here $[\M_i,\p_i]$ denotes the oriented area of the parallelogram spanned by the vectors $\M_i$ and $\p_i$. Therefore our problem is equivalent to finding $T$-periodic solutions of the Hamiltonian system with the Hamilton function $H-\fomega_1I$, where $\fomega_1=\frac{\alpha-2\pi
s}{T}$, $s$ is any integer. Due to (\ref {maxrez}) and (\ref {parametry}), we can define $\fomega_1$ as the angular frequency of any planet or satellite, e.g.\ the angular frequency of the first planet. 

The invariant $4N$-dimensional submanifold $M^{4N}\approx T^*\widehat Q$ (cf.\ (\ref {eq:4N})) of the $N+1$ body problem is invariant under the phase flow of the system with the Hamilton function (\ref {M}), and a relatively-periodic solution on $M^{4N}$ is defined as above. 

For the model system (\ref {eq:model:H:omega}), (\ref {eq:model:H}), (\ref {eq:model:omega}), a relatively-periodic solution is defined via the Hamiltonian action of the circle with the Hamiltonian $I=\sum_{i=1}([\mx_i,\mxxi_i]+\sum_{j=1}^{n_i}[\my_{ij},\meeta_{ij}])$ on the submanifold $(\bbR^2)^N\times(\bbR^2)^N$ with coordinates $(\mx,\mxxi)$ and the symplecitc structure $d\mxxi\wedge d\mx$. For the unperturbed and perturbed systems (\ref {eq:unpert0}), (\ref {eq:unpert1}) and (\ref {eq:pert:planets}), a relatively-periodic solution is defined via the same $S^1$-action on the phase spaces of these problems, by identifying the phase spaces of these problems with the corresponding domains in the space $(\bbR^2)^N\times(\bbR^2)^N$ via the coordinates $(\ux,\uxxi)$ and $(\px,\pxxi)$ respectively.

\subsection {Symmetric solutions} \label {par:sym}

Let us show that the conditions 1 and 2 in Definition \ref {def:sym} of a
symmetric solution are equivalent. For this we will use the
invariance of the total energy $H$ of the system under the following
two involutions $S_l$ and $S$ in the phase space $T^*\widehat Q$ (cf.\ (\ref {eq:4N})) defined as follows.

Let us fix a line $l$ in the plane of motion passing through the
centre of masses of the system of particles. Define the following
three transformations in the phase space $T^*\widehat Q$ preserving the total
energy $H$ of the system:

1) the canonical involution $S_l:T^*\widehat Q\to T^*\widehat Q$ corresponding to the self-map (axial symmetry) 
of the configuration manifold $\widehat Q$ sending all particles of the system to their images under the symmetry with respect to the line $l$;

2) the anti-canonical involution $S$ (``reversion of time'')
sending each pair $(\q,\p)\in T^*\widehat Q$ to the pair $(\q,-\p)$ where $\q$
and $\p$ are the sets of ``coordinates'' and ``momenta'' of all particles of the system;

3) the anti-canonical involution $\J_l:=S S_l=S_lS$.

Each of these transformations is an involution, i.e.\ coincides with
its inverse. The first involution is canonical, i.e.\ preserves the
canonical symplectic structure $d\p\wedge d\q$ on $T^*\widehat Q$. The second
and the third involutions are anti-canonical, i.e.\ they affect the
symplectic structure by changing its sign to the opposite. Thus all
three involutions move trajectories of the system to trajectories,
moreover the first involution preserves the time on trajectories,
while the second and the third involutions ``reverse the time''.

A solution satisfies the first (respectively the second) condition of
symmetry if and only if the point of the phase space corresponding to
the time $t_0$ (respectively any time $t\in\bbR$) of this solution is
fixed (respectively is mapped to the point corresponding to the time
$2t_0-t$ of the same solution) under the involution $\J_l=S S_l=S_lS$. This shows the
equivalence of the conditions 1 and 2 in Definition \ref {def:sym} of a symmetric
solution.

A solution $\gamma(t)$ is symmetric and $(T,\alpha)$-periodic if and
only if its points $\gamma(t_0)$ and $\gamma(t_0+T/2)$ are fixed
under the involutions $\J_l$ and $\J_{R_{\alpha/2}(l)}$ respectively
where $R_{\varphi}:\bbR^2\to\bbR^2$ denotes the rotation by the angle
$\varphi$.

\subsection {A stable relatively-periodic solution} \label {par:ust}

Suppose that a Hamiltonian system $(X,\oomega,H)$ is
$S^1$-symmetric with respect to the Hamiltonian action of a circle
$S^1$ on $X$ via the Hamiltonian function $I$. Then the function $I$
is a first integral of the system. Consider the phase flow
$g_{H-\fomega_1I}^t:X\to X$, $t\in\bbR$, of the system with the
Hamilton function $H-\fomega_1I$. The map $A:=g_{H-\fomega_1I}^T$
will be called the {\it succession map}, and its linear part
$dA(\pnt)$ at a fixed point $\pnt$ will be called the {\it monodromy
operator} at this point.

Let us define a ``reduced'' succession map for the two-dimensional
torus $\gamma$ corresponding to a $(T,\fomega_1T)$-periodic solution.
Let $\Sigma\subset X$ be a small surface of codimension 2 that
transversally intersects the torus $\gamma$ at some point
$\pnt_0=\gamma\cap\Sigma$. Consider the restriction of the system to
a regular common level set
 $$
X_{H,I}:=\{H=\const,\ I=\const\}\supset\gamma
 $$
of the first integrals $H$ and $I$. Consider the small surface
$\sigma:=\Sigma\cap X_{H,I}$ of codimension 2 in $X_{H,I}$, which
transversally intersects the torus $\gamma$ at the point
$\pnt_0=\gamma\cap\sigma$. Consider the two-dimensional foliation on
the manifold $X_{H,I}$ whose fibres are invariant under the
(commuting) flows of the systems with Hamilton functions $H$ and
$I$; this condition uniquely determines fibres. Take the self-map
$\bar A$ of the surface $\sigma$ sending any point $\pnt\in\sigma$ to
the ``next intersection point'' of the fibre containing the point $\pnt$ with 
the surface $\sigma$. In more detail, the map $\bar
A:\sigma'\to\sigma$ is defined in a sufficiently small neighbourhood
$\sigma'\subset\sigma$ of the point $\pnt_0$ in $\sigma$, it is
``close'' to the map $A|_{\sigma'}=g_{H-\fomega_1I}^T|_{\sigma'}$ and
has the form $\bar A(\pnt)=g_{H-f_1(\pnt)I}^{f_2(\pnt)}(\pnt)$. Here
$f_1$ and $f_2$ are some smooth functions on $\sigma'$ defined by the
conditions $\bar A(\pnt)\in\sigma$, $\pnt\in\sigma'$,
$f_1(\pnt_0)=\fomega_1$, $f_2(\pnt_0)=T$. The map $\bar A$ will be
called the {\it reduced succession map} (or the {\it Poincar\'e
map}), and its linear part $\bA=d\bar A(\pnt_0)$ at the point
$\pnt_0$ will be called the {\it reduced monodromy operator}
corresponding to the torus $\gamma$.
As is well-known \cite {12}, the transversal surface
$\sigma$ (called a {\em cross section}) is a symplectic submanifold and the self-map $\bar A$ of
this surface is also symplectic. In particular, the reduced monodromy
operator $\bA$ is symplectic too. 

Recall that a linear operator $\bA$ is called {\it (Liapunov) stable} if the norm of the operator $\bA^n$ is bounded as $n\to\infty$. A symplectic operator $\bA$ is
called {\it structurally stable} if it is stable and any symplectic
operator that is close enough to $\bA$ is stable too.

\begin {scDef}  \label {def5}
The two-dimensional torus $\gamma$ and the corresponding
relatively- periodic solution will be called {\it orbitally
structurally stable in linear approximation \(OSSL\)} (respectively
{\it orbitally stable in linear approximation on a common level
surface of the first integrals of energy and angular momentum
\(OSLI\)}) if the reduced monodromy operator $\bA=d\bar A(\pnt_0)$
corresponding to the torus $\gamma$ is structurally stable
(respectively stable). The torus $\gamma$ will be called {\it
isoenergetically nondegenerate \(IN\)} if 1 does not belong to the
spectrum of the reduced monodromy operator $\bA$, i.e.\ $1\notin{\rm
spec\,}\bA$. The torus $\gamma$ is called {\it orbitally stable in
linear approximation \(OSL\)} if the linear operator $\bB=d\bar
B(\pnt_0)$ is stable, where $\bar B:\Sigma'\to\Sigma$ is the map
defined similarly to the Poincar\'e map $\bar A:\sigma'\to\sigma$.
 \end {scDef}

\begin {scDef}  \label {deff4}
An eigenvalue $\lambda\in\bbC$ of a symplectic operator $\bA$ is
called {\it elliptic} \cite {12} if it satisfies one of the following
equivalent conditions:

1) the quadratic form $Q(\xi)=\oomega(\bA\xi,\xi)$ is (positive or
negative) definite on the maximal invariant subspace where the
operator $\bA$ has no eigenvalues apart from $\lambda$ and
$\bar{\lambda}$;

2) the Hermitian quadratic form $\frac{1}{2i}\oomega(\xi,\bar{\xi})$
is (positive or negative) definite on the complex eigensubspace with
eigenvalue $\lambda$ of the complexified space.

The quadratic form $Q$ is called {\it the generating function} of the
symplectic operator $\bA$ (see also definition \ref {def:gen:func}).
 \end {scDef}

\begin {Pro}   [{\rm(см.\ \cite {Valigno})}] \label {stAB}
{\rm(A)} A symplectic operator is {\em stable} if and only if it is
diagonalizable over $\bbC$ and all its eigenvalues belong to the unit
circle in $\bbC$.

{\rm(B)} A symplectic operator is {\em structurally stable} if and
only if all its complex eigenvalues are {\em elliptic}.
 \qed
 \end {Pro}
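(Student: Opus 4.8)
The plan is to dispatch (A) by elementary spectral analysis and (B) by the Krein--Gelfand--Lidskii theory of strongly stable symplectic maps, the organizing tool throughout being the Hermitian form $h(\xi):=\frac1{2i}\oomega(\xi,\bar\xi)$ on the complexified space (the generating form of definition~\ref{deff4}). Before anything else I would record the reciprocity of the symplectic spectrum: since $\bA$ is symplectic and real, its spectrum is invariant under $\lambda\mapsto1/\lambda$ and under $\lambda\mapsto\bar\lambda$, so eigenvalues occur in quadruples $\lambda,\bar\lambda,1/\lambda,1/\bar\lambda$, and $|\lambda|=1$ holds iff $|1/\lambda|=1$.

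For (A): if $\bA$ is stable and some $|\lambda|>1$, then $\|\bA^n\|\ge|\lambda|^n\to\infty$; reciprocity then also rules out $|\lambda|<1$, so every eigenvalue is unimodular. A Jordan block of size $\ge2$ for a unimodular $\lambda$ makes the entries of $\bA^n$ grow polynomially (like $n\lambda^{n-1}$), again contradicting boundedness, so $\bA$ is semisimple. Conversely, if $\bA$ is diagonalizable over $\bbC$ with unimodular spectrum, then in an eigenbasis $\bA^n$ has unimodular diagonal entries, whence $\|\bA^n\|$ is bounded by a constant depending only on the fixed change-of-basis matrix. This half is routine.

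For (B) I would first isolate the two facts that make ellipticity the right condition. (i)~If $h$ is definite on the $\lambda$-eigenspace then $|\lambda|=1$: from $\oomega(\bA\xi,\bA\bar\xi)=\oomega(\xi,\bar\xi)$ one gets $(|\lambda|^2-1)\oomega(\xi,\bar\xi)=0$, and definiteness forces $\oomega(\xi,\bar\xi)\ne0$, hence $|\lambda|=1$. (ii)~Definiteness of $Q(\xi)=\oomega(\bA\xi,\xi)$ on the real root subspace $W_\lambda$ spanned by $V_\lambda\oplus V_{\bar\lambda}$ (condition~1) is in particular nondegeneracy, which excludes Jordan blocks; so an elliptic eigenvalue is automatically semisimple. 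The short computation $Q(\xi)=-4\sin\theta\cdot h(\xi)$ for $\lambda=e^{i\theta}$ shows conditions 1 and 2 agree for $\lambda\ne\pm1$ and that $\lambda=\pm1$ is never elliptic. Combining (i), (ii) with (A) gives: if every eigenvalue of $\bA$ is elliptic then $\bA$ is stable. The ($\Leftarrow$) direction of structural stability then follows from \emph{openness of definiteness}: for a symplectic $\widetilde\bA$ near $\bA$, the spectral subspace $\widetilde W$ collecting the eigenvalues near a fixed unimodular $\lambda_0\ne\pm1$ varies continuously, and $Q|_{\widetilde W}$ stays definite; applying (i)--(ii) to $\widetilde\bA|_{\widetilde W}$ keeps all perturbed eigenvalues unimodular and semisimple. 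As elliptic eigenvalues are bounded away from $\pm1$, the whole perturbed spectrum stays on the unit circle with no Jordan blocks, so $\widetilde\bA$ is stable by (A).

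For the ($\Rightarrow$) direction of (B) I argue by contraposition: a non-elliptic unimodular eigenvalue can always be pushed off the circle by an arbitrarily small symplectic perturbation, so $\bA$ is not structurally stable. If $\lambda_0=\pm1$ occurs, I restrict to a symplectic $2$-plane on which $\bA$ acts as $\pm I$ and replace it by $\pm\,\mathrm{diag}(e^t,e^{-t})$ (determinant one, hence symplectic), creating real eigenvalues off the unit circle. If $\lambda_0\ne\pm1$ is unimodular but $h$ is indefinite on $V_{\lambda_0}$, I realize inside $W_{\lambda_0}$ two ``rotation blocks'' of opposite Krein sign and perform the standard Krein collision: a small symplectic perturbation merges them into eigenvalues $e^{\pm i\theta\pm t}$ that leave the circle. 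The main obstacle is precisely this ($\Rightarrow$) step: the explicit construction of destabilizing symplectic perturbations in the indefinite/collision case and at $\lambda=\pm1$, with the bookkeeping guaranteeing the perturbation is simultaneously symplectic, arbitrarily small, and genuinely spectrum-moving. The definiteness/openness half is comparatively soft. Since the paper already invokes Arnold's framework~\cite{12} for these notions, I would present the argument at the level of the Krein signature and defer the elementary but tedious normal-form computations.
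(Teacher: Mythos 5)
The paper itself offers no proof of this proposition: the \qed{} is placed immediately after the statement, and the result is quoted as classical (the Krein--Gelfand--Lidskii theory of strongly stable symplectic operators, in the framework of \cite{12} already invoked in definition \ref{deff4}). Your outline is therefore not ``a different route from the paper'' so much as the standard route that the paper leaves implicit, and it is essentially correct. Part (A) is complete as written. In part (B), the sufficiency direction (elliptic $\Rightarrow$ structurally stable) via the identity $(|\lambda|^2-1)\,\oomega(\xi,\bar\xi)=0$, semisimplicity of definite-type eigenvalues, and openness of definiteness under perturbation of the spectral subspaces is the correct and standard argument; one small point worth making explicit is that in the perturbed spectral subspace $\widetilde W$ several distinct eigenvalues may appear, so you should note that definiteness of $Q$ on all of $\widetilde W$ restricts to definiteness on each individual root subspace. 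For the necessity direction your reduction is sound: by the paper's definition structural stability presupposes stability, so by (A) you may assume $\bA$ semisimple with unimodular spectrum, which legitimizes both the existence of a symplectic $2$-plane on which $\bA=\pm I$ (the restriction of $\oomega$ to the $\pm1$-eigenspace is nondegenerate) and the splitting of an indefinite root space into rotation blocks of opposite Krein sign. The only genuinely deferred content is the explicit Krein-collision perturbation, which you correctly flag; that is exactly the part for which one cites \cite{12}, so the deferral is acceptable here.
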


Let us mention some connections between the stability properties
introduced above of an invariant two-dimensional torus $\gamma$:

1) the following implications hold: IN $\Leftarrow$ OSSL
$\Rightarrow$ OSL $\Rightarrow$ OSLI. (The second implication is an
important property of tori having the OSSL property; it follows from
property 3 below. The first implication follows from proposition \ref
{stAB}(B). The third implication is obvious. The inverse implications
are in general false);

2) if all eigenvalues of a symplectic operator $\bA$ are pairwise
different and lie on the unit circle in $\bbC$ then it is
structurally stable, thus the torus $\gamma$ is OSSL;

3) if the torus $\gamma$ is isoenergetically nondegenerate (IN) then
it is included into a smooth two-parameter family of isoenergetically
nondegenerate two-dimensional tori $\gamma_{H,I}$ where parameters of
the family are values of the first integrals 
$H$ and
$I$. If the torus $\gamma$ is OSSL (and, hence, IN)
then all the invariant tori of this family are also OSSL. Hence OSSL
$\Rightarrow$ OSL (and not only OSLI).

We stress that, if the torus $\gamma$ is OSLI and IN, then the other
tori of the family do not need to be OSLI, thus the torus $\gamma$
does not need to be OSL.

Thus, among the stability properties introduced in Definition \ref
{def5} for a relatively-periodic solution, the strongest one is the OSSL
property, which is studied in the present paper (see theorems \ref
{th:ust}(B) and \ref {th:stab}).

\subsection {Nondegeneracy condition} \label {subsec:period}

(A) Let us give {\em sufficient} conditions for nondegeneracy (\ref {alpha}) of a tuple of angular frequencies $\fomega_i,\Omega_{ij}$. Due to (\ref {poryadki:chastot'}), for small enough $0<\somega\ll1$, the nondegeneracy (\ref {alpha}) implies the delicate nondegeneracy (\ref {eq:fine}). 
In the presence of satellites ($N>n$), any of the nondegeneracy condition (\ref {alpha}) and the delicate nondegeneracy condition (\ref {eq:fine}) imply, due to the inequality $|\alpha|\le\pi$, that the relative period $T$ of the solutions under investigation is ``not too big'': 
 \begin{equation} \label {alpha''}
T<\frac\pi{\somega^2} \qquad \mbox{or} \qquad T<\frac\pi{C\somega^3} \qquad\quad\mbox{(for } N>n),
 \end{equation}
respectively. In the case $n>1$ (at least two planets) the relative period $T$ is always ``big'' for $0<\somega\ll1$. In more detail: $T=\frac{2\pi|k_2|}{|\fomega_2-\fomega_1|}>\frac{\pi|k_2|}{\somega}\ge\frac\pi\somega$ due to (\ref {maxrez}), (\ref {poryadki:chastot'}), (\ref {poryadki:chastot''}). Therefore, if $N>n>1$ then any of the relations (\ref {alpha''}) implies
 $$
\frac{\pi}{\somega}<T<\frac{\pi}{\somega^2} \qquad \mbox{or} \qquad 
\frac{\pi}{\somega}<T<\frac{\pi}{C\somega^3} \qquad\quad\mbox{(for } N>n>1),
 $$ 
respectively. However, if $N>n=1$ (only one planet, as in the case by V.N.\ Tkhay \cite {Thay}), then, due to (\ref {poryadki:chastot'}), the minimal positive relative period $T_{\min}\in\left[\frac{2\pi\max_j|K_{1j}|}{1+\somega},\frac{2\pi\min_j|K_{1j}|}{c-\somega}\right]$ is of order $1$ when $\somega\ll1$ and $c,K_{1j}$ are fixed.

(B) Let us give {\em sufficient} conditions for nondegeneracy (\ref {alpha}), which we will call {\it rough nondegeneracy conditions}.

(B1) Let us show that the nondegeneracy condition (\ref {alpha}) is always realizable for any $0<\somega\ll1$ and any relative period $T$ of the form
 \begin{equation} \label {eq:rough0} 
 T=\frac{2\pi a}{\somega}, \quad a\ge a_0(k_2,\dots,k_n):=\max\left\{7|k_i|,\ \sqrt{7(N-n+1)},\ \frac{21}2\right\}, 
 \end{equation}
for suitable collection of angular frequencies $\fomega_i,\Omega_{ij}$ with integers $k_2,\dots,k_n,K_{ij}$ of orders $1,\frac1\somega$. This will imply that the value $\somega^2T$ can be made arbitrary small, hence the nondegeneracy condition (\ref {alpha}) (and, hence, (\ref {eq:fine})) can be always fulfilled.
Indeed: let us fix integers $k_2,\dots,k_n\in\bbZ\setminus\{0\}$ having different absolute values, and real numbers $a\ge a_0(k_2,\dots,k_n)$ and $c\in(0,c_0(a)]$, where $c_0(a):=\min\{\frac1a,\frac1{14(N-n+1)}\}$. For any $b\in[\frac27,\frac37]$ and $0<\somega\le\min\{\somega_0,\somega_0(a)\}$, where $\somega_0(a):=\frac1{4a}$, put
 \begin{equation} \label {eq:rough1}
\fomega_1:=\somega b, \qquad \fomega_i:=\fomega_1+k_i\frac\somega a,
\qquad \Omega_{ij}:=\fomega_1+K_{ij}\frac\somega a
 \end{equation}
for any integers $K_{ij}\in\bbZ$ satisfying the following conditions for any $i$ and $j\ne j'$:
 \begin{equation} \label {eq:Kij}
|K_{ij}|\in\left[\frac{5a}{7\somega},\frac{6a}{7\somega}\right], \ ||K_{ij}|-|K_{ij'}||\ge\ell, \ \mbox{ where } \ell\in\bbN\cap\left[\frac{ca}\somega,\frac{a}{7(N-n+1)\somega}\right].
 \end{equation}
Such integers $\ell$ and $K_{ij}$ exist for $0<\somega\le\somega_0(a)=\frac1{4a}$, since the interval
$[\frac{ca}\somega,\frac{a}{7(N-n+1)\somega}]$ (respectively
$[\frac{5a}{7\somega},\frac{6a}{7\somega}]$)  is of the length
$\frac{a}{\somega}(\frac1{7(N-n+1)}-c)\ge\frac{2a^2}{7(N-n+1)}\ge2$ (respectively
$\frac{a}{7\somega}\ge(N-n+1)\ell$), and, hence, it
always contains a positive integer (respectively $N-n$ different
positive integers with pairwise distances $\ge\ell$). 
Since
$\alpha=\fomega_1T=2\pi ab(\mod2\pi)$ and $\somega^2T=2\pi
a\somega\le\frac\pi2$, the required nondegeneracy condition (\ref {alpha}) has the form 
$[a(b-\somega),a(b+\somega)]\cap\bbZ=\varnothing$. Hence it holds if
$[ab-\frac14,ab+\frac14]\cap\bbZ=\varnothing$, i.e.\ 
$b\in\frac1{2a}+(-\frac1{4a},\frac1{4a})+\frac1a\bbZ$. It follows
from $\frac17\ge\frac1{a}$ that, for any $a\ge a_0(k_2,\dots,k_n)$,
there exists $b\in[\frac27,\frac37]\cap((\frac1{4a},\frac3{4a})+\frac1a\bbZ)$. 
Moreover, for any 
 \begin{equation} \label {eq:rough2}
 a\ge a_0(k_2,\dots,k_n), \qquad  
 b\in \left[\frac27,\frac37\right]\cap\left(\left(\frac1{4a},\frac3{4a}\right)+\frac1a\bbZ\right)
 \end{equation}
the condition $b\in[\frac27,\frac37]$ and the nondegeneracy condition (\ref {alpha})
hold automatically for the period (\ref {eq:rough0}) and the collection of angular frequencies of the form (\ref {eq:rough1}), (\ref {eq:Kij}). 
The constructed frequencies $\fomega_i,\Omega_{ij}$ satisfy the inequalities 
$\frac17\le\frac{|\fomega_i|}\somega\le\frac47<|\Omega_{ij}|<1$,
$\frac{\left| |\fomega_i|-|\fomega_{i'}|\right|}\somega\ge\frac1a\ge c$,
$\left| |\Omega_{ij}|-|\Omega_{ij'}|\right|\ge\frac{\somega}a\ell\ge c$,
and, hence, the inequalities (\ref {poryadki:chastot'}) and (\ref {poryadki:chastot''}).
This proves the realizability of any period $T$ of the form (\ref {eq:rough0}). 
Thus, the system of relations (\ref {eq:rough0})--(\ref {eq:rough2}),
which we will call the {\it rough nondegeneracy condition}, gives a two-parameter family of nondegenerate tuples of frequencies $\fomega_i,\Omega_{ij}$ with parameters $T=\frac{\const}\somega$ and $\fomega_1\sim\somega$ (for integers $k_2,\dots k_n,K_{ij}$ of orders $1,\frac1\somega$), where $\somega\ll1$. 
In particular, the rough nondegeneracy holds if 
 $
 a\ge a_0(k_2,\dots,k_n)$, $b\in\left(\frac1a\left[\frac{3a}7+\frac14\right]-\frac3{4a},\ \frac1a\left[\frac{3a}7+\frac14\right]-\frac1{4a}\right)$, $0<\somega\le\min\{\somega_0,\somega_0(a)\}$.


We notice that Theorems~\ref {th1}--\ref {th:degen:sym} do not assume that the relative period $T>0$ in (\ref {parametry}) is minimal. They assume only that it satisfies either the nondegeneracy condition (\ref {alpha}), of the more delicate condition (\ref {eq:fine}), or the more rough condition (\ref {eq:rough0}), (\ref {eq:rough1}), (\ref {eq:Kij}), (\ref
{eq:rough2}), or the $T_{\min}$-rough condition (cf.\ (B3)). 
For example, we can assume that $T=\frac{\const}{\somega}$ (see above).

(B2) Let $N=n>1$ (there are no satellites, as in the cases by G.M.\ Krasinskii \cite {11} and, in particular, H.\ Poincar\'e \cite {1}). As in the general case (B1), one constructs a two-parameter family of tuples of frequencies $\fomega_i$ with parameters $T=\frac{2\pi a}\somega$ and $\fomega_1\sim\somega$ (for fixed integers $k_2,\dots k_n$, real numbers $a=\const\ge a_0$ and $0<\somega\le\min\{\somega_0,\somega_0(a)\}$), satisfying the {\it rough nondegeneracy condition} 
(\ref {eq:rough0}), (\ref {eq:rough1}), (\ref {eq:rough2}) with $N=n$, where the numbers $\Omega_{ij},K_{ij}$ in (\ref {eq:rough1}) are ignored. As above, the rough nondegeneracy implies the nondegeneracy (\ref {alpha}).

(B3) Let $N>n=1$ (only one planet, as in the cases by V.N.\ Tkhay \cite {Thay} and, in particular, G.W.\ Hill \cite {H}), i.e.\ we study a system Sun--planet--satellites. If $N=2$ (the system Sun--Earth--Moon, as in the case by G.W.\ Hill \cite {H}), the minimal positive relative period is $T_{\min}=\frac{2\pi}{|\Omega_{11}-\fomega_1|}\in[\frac{2\pi}{1+\somega},\frac{2\pi}{c-\somega}]$ and the angle of turning is $\alpha_{\min}=\fomega_1T_{\min}\in(-\pi,\pi)$ (so, they have orders $1$ and $\somega$, respectively) if $0<\somega<\min\{\somega_0,\frac{c}{3}\}$.
Therefore the conditions of a relative resonance (\ref {maxrez}) and nondegeneracy (\ref {alpha}) hold automatically for $T=T_{\min}$, for any $\fomega_1,\Omega_{11}$ of the form (\ref {chast}), (\ref {poryadki:chastot'}), (\ref {poryadki:chastot''}) and $0<\somega\le\min\{\somega_0,\frac{c}{3}\}$. More generally: if $N>n=1$ then, due to boundedness of $T_{\min}$ (cf.\ (A)), the angle of turning is $\alpha_{\min}=\fomega_1T_{\min}\in(-\pi,\pi)$, and the nondegeneracy condition (\ref {alpha}) holds for $T=T_{\min}$ if the tuple of frequencies $\fomega_1,\Omega_{1j}$ has the form (\ref {chast}), (\ref {poryadki:chastot'}), (\ref {poryadki:chastot''}), (\ref {maxrez}) and 
$0<\somega<\min\{\somega_0,\frac{c}{1+2\min_j|K_{1j}|}\}$.
On the other hand, if $n=1$ then the conditions (\ref {chast}), (\ref {poryadki:chastot'}) and (\ref {poryadki:chastot''}) hold automatically, provided that the tuple of frequencies has the form (\ref {maxrez}) for some relatively-prime tuple of nonvanishing integers $K_{1j}$ with pairwise different absolute values, with the properties $\frac{\min_j|K_{1j}|}{\max_j|K_{1j}|}=:r\in(c,1]$ and $\frac{\min_{j<j'}||K_{1j}|-|K_{1j'}||}{\max_j|K_{1j}|}=:r'\in(c,1]$, и каких-либо чисел $\somega\in(0,\min\{\somega_0,\frac{c}{1+2\min_j|K_{1j}|},1-\frac{1+c}{1+r},1-\frac{2+c}{2+r'}\})$,
 $\fomega_1:=\somega c$ and
 \\ $T\in\left[2\pi\frac{\max_j|K_{1j}|}{1-\somega},2\pi\min\{\frac{\min_j|K_{1j}|}{c+\somega},\frac{\min_{j<j'}||K_{1j}|-|K_{1j'}||}{c+2\somega}\}\right]$ (the latter segment has a nonempty interiour, since $0<\somega<\min\{1-\frac{1+c}{1+r},1-\frac{2+c}{2+r'}\}$ and $r,r'>c$).
The latter system of relations on $K_{1j},\somega,\fomega_1,T$ gives a two-parameter family of nondegenerate (i.e.\ satisfying (\ref {alpha})) tupls of frequencies $\fomega_1,\Omega_{1j}$ with parameters $T=T_{\min}\sim1$ and $0<\somega\ll1$ (and fixed $K_{1j}$) if $n=1$. 
Such tuples of frequencies will be called {\it $T_{\min}$-roughly nondegenerate} for $n=1$.


\section {Constructing relatively-periodic solutions of the unperturbed problem (\ref {eq:unpert0}), (\ref {eq:unpert1})}
 \label {par3:1:5}

Let us fix real numbers $m_i,m_{ij},\somega>0$ and an arbitrary tuple of ``angular frequencies'' $\fomega_i,\Omega_{ij}\in\bbR\setminus\{0\}$ of the form (\ref {chast}), (\ref {poryadki:chastot'}). Let $(\mx^0(t),\mxxi^0(t))=(\mx_*^0(t),\my_{**}^0(t),\mxxi_*^0(t),\meeta_{**}^0(t))$ be the main generating solution (\ref {eq:gen:sol}) of the model system (\ref {eq:model:H:omega}) corresponding to this tuple of frequencies. 

Let us study the unperturbed system (\ref {eq:unpert0}), (\ref {eq:unpert1}) near the curve $(\mx^0(t),\mxxi^0(t))$. We will describe a construction of its solution $(\ux^0(t),\uxxi^0(t))=(\ux_*^0(t),\uy_{**}^0(t),\uxxi_*^0(t),\ueeta_{**}^0(t))$ that is $O(\somega)$--close to the solution $(\mx^0(t),\mxxi^0(t))$ and satisfies that following conditions: $(\ux_*^0(t),\uxxi_*^0(t))=(\mx_*^0(t),\mxxi_*^0(t))$ and the vectors $\uy_{ij}^0(0)$ are collinear with the abscissa axis (moreover, the solution is $(T,\alpha)$-periodic if (\ref {maxrez}), (\ref {parametry})).

If there are no satellites then the unperturbed system coincides with the model one, and the solution is already constructed: $(\ux^0(t),\uxxi^0(t))=(\mx^0(t),\mxxi^0(t))$. 

Suppose that satellites are present. 
We have to find a solution $(\uy_{**}^0(t),\ueeta_{**}^0(t))$ of the non-autonomous Hamiltonian system (\ref {eq:unpert1}), where $\ux_*(t):=\mx_*^0(t)$.
This system splits into the direct product of systems (corresponding to ``satellites'') with the Hamilton functions and the symplectic structures 
 \begin {equation} \label {eq:unpert1ij}
  S^{(\ell)}_j(\uy_{\ell j},\ueeta_{\ell j};m_\ell,m_{\ell j}) + \somega^2 m_{\ell j} F(\mx_\ell^0(t),\uy_{\ell j}), \qquad 
 d\ueeta_{\ell j}\wedge d\uy_{\ell j} 
 \end{equation}
corresponding to the $j$-th satellite of the $\ell$-th planet ($1\le\ell\le n$, $1\le j\le n_\ell$),
 \begin{equation} \label {eq:S:F}
 S^{(\ell)}_j(\uy_{\ell j},\ueeta_{\ell j};m_\ell,m_{\ell j}) := \frac{\ueeta^2_{\ell j}}{2m_{\ell j}} -\frac{m_\ell m_{\ell j}}{|{\uy}_{\ell j}|}, \quad
 F(\hx,\hy):=\frac{\hx^2\hy^2-3\langle\hx,\hy\rangle^2}{2|\hx|^5} . 
 \end{equation}
So, it suffices to construct, for each pair $(\ell,j)$, a solution $(\uy_{\ell j}(t),\ueeta_{\ell j}(t))$ of the system (\ref {eq:unpert1ij}) that is $O(\somega)$--close to $(\mx^0_{\ell j}(t),\mxxi^0_{\ell j}(t))$ and satisfies the following conditions: it is relatively-periodic with the same relative period $(\frac{2\pi}{|\Omega_{\ell j}-\fomega_\ell|},\frac{2\pi\fomega_\ell}{|\Omega_{\ell j}-\fomega_\ell|})$ as for the solution $(\mx^0_{\ell j}(t),\mxxi^0_{\ell j}(t))$, and the radius-vector $\uy_{\ell j}^0(0)$ is collinear with the abscissa axis. In fact: if (\ref {maxrez}), (\ref {parametry}) then such a solution will be automatically $(T,\alpha)$-periodic, since $(K_{\ell j}-k_\ell)(\frac{2\pi}{\Omega_{\ell j}-\fomega_\ell}, \frac{2\pi\fomega_\ell}{\Omega_{\ell j}-\fomega_\ell})=(T,\alpha + 2\pi(k_\ell-k))$. 

Let us apply the transformation 
$(\uy_{\ell j},\ueeta_{\ell j})=:e^{i\fomega_\ell t}(\hy,\heeta)$, which is equivalent to passing to a {\it synodic} coordinate system (i.e.\ a coordinate system rotating with the ``scaled planet'' $\mx_\ell^0(t)={e^{i\fomega_\ell t}}\hx\o$, where $\hx\o:=({\Omega_{\ell 0}^{-2/3}},0)$).
This transformation reduces the system (\ref {eq:unpert1ij}) to the autonomous Hamiltonian system with the Hamilton function and the symplectic structure
 \begin {equation} \label {eq:Hill}
S^{(\ell)}_j(\hy,\heeta;m_\ell,m_{\ell j}) - \fomega_\ell [\hy,\heeta]
 + \fomega_\ell^2 {m_{\ell j}} F(\hx,\hy)|_{\hx=(1,0)}, 
 \quad d\heeta\wedge d\hy .
 \end {equation}
The system (\ref {eq:Hill}) with the parameter values $\fomega_\ell=m_\ell=m_{\ell j}=1$ describes the known {\it Hill problem}, and any system (\ref {eq:Hill}) with any nonzero parameter values reduces to it by some power transformaion (cf.\ \S \ref {subsec:Hill:0}).
The summand $F(\hx,\hy)|_{\hx=(1,0)}=-\hyi_1^2+\hyi_2^2/2$ in the Hamilton function of the problem (\ref {eq:Hill}) (and, hence, of the Hill problem), as well as the function $F(\hx,\hy)$, will be called the {\it Hill potential} (or the ``limiting potential of the action of the Sun to the satellite'').

Our purpose is the following. For small enough $|\fomega_\ell|$, we have to construct a $\frac{2\pi}{|\Omega_{\ell j}-\fomega_\ell|}$-periodic solution $(\hy^0(t),\heeta^0(t))$ of the system (\ref {eq:Hill}) that is $O(\fomega_\ell)$--close to the $\frac{2\pi}{|\Omega_{\ell j}-\fomega_\ell|}$-periodic solution $e^{-i\fomega_\ell t}(\my_{\ell j}^0(t),\meeta_{\ell j}^0(t))$ of the similar system with $F\equiv0$ and has the following property: its initial point $\hy^0(0)$ belons to the abscissa axis. 
For doing this, we will first check (in \S \ref {par3:1:5'}) that the system (\ref {eq:Hill}) satisfies the hypotheses of the technical Lemma \ref {lem:mult}(A), which means that the Hamilton function $S^{(\ell)}_j(\hy,\heeta;m_\ell,m_{\ell j})$ of the planar Kepler problem satisfies the conditions (\ref {eq:isoen}) in some canonical coordinates $\varphi_{\ell j}\mod2\pi,I_{\ell j},q_{\ell j},p_{\ell j}$.

\subsection {Normalizing the Kepler problem near circular orbits} \label {par3:1:5'}

Let us show that the Hamilton function of any planar
Kepler's problem has the form $H=H(I,q,p)$
and satisfies the conditions (\ref {eq:isoen}) of Theorem \ref {th1'},
with respect to some canonical coordinates $\varphi,I,q,p$ in some neighbourhood of the union of phase trajectories of circular solutions.

The {\it planar Kepler problem} is given by the Hamiltonian system
 \begin{equation} \label {HamKepler}
 \left(M=T^*(\bbR^2\setminus\{0\}),\ \oomega=d\p\wedge d\q,\ H=\frac{\p^2}{2m}-\frac{km}{|\q|}=\Kin+U\right).
 \end{equation}
Here $\Kin=\frac{\p^2}{2m}$ and $U=-\frac{km}{|\q|}$ are the kinetic
and potential energies of the system, $\q\in\bbR^2\setminus\{0\}$ and
$\p\in\bbR^2$ are the radius vector and the momentum of the
particle, $m,k>0$ are parameters. The solutions of the Kepler problem
having negative energy levels $H$ are periodic. Since the Kepler
problem is invariant under all rotations of the plane, it has the
first integral of angular momentum $I=[\q,\p]$.

Note simple properties of circular motions in the Kepler problem:

1) For any $r>0$, there is a unique (up to changing the direction of
rotation) circular motion of the particle satisfying the system (\ref
{HamKepler}) along a circle of radius $r$. The angular velocity of
this motion equals $\Omega=\pm\sqrt{\frac{k}{r^3}}$, while the energy
$H$ and the angular momentum $I$ equal $H=-\frac{km}{2r}$ and $I=m\Omega
r^2=\pm m\sqrt{kr}$ respectively. In particular, the values $H$ and
$I$ depend monotonically on the value $\Omega$ (for $\Omega>0$ or
$\Omega<0$) and take all values in the domains $H<0$ and $I\ne 0$
respectively. We will assume that the parameters $r$, $\Omega$, $H$,
$I$ of a circular motion are related by the formulae from above.

2) Circular motions correspond to the equilibrium (i.e.\ stationary)
positions of the particle with respect to a rotating coordinate
system with angular velocity $\Omega$. Therefore, for any $\alpha\ne
0$, the solution of the Kepler problem corresponding to a circular
motion is $(T,\alpha)$-periodic with $T=\frac{\alpha}{\Omega}$. In
other words, such a solution is $\frac{\alpha}{\Omega}$-periodic with
respect to a rotating coordinate system with angular velocity
$\Omega$ (as well as any angular velocity of the form
$(1+\frac{2\pi\ell}{\alpha})\Omega$ where $\ell$ is an integer).

The Kepler problem of our interest (\ref {eq:unpert1ij}) with $\somega=0$ (or (\ref {eq:Hill}) with $\fomega_\ell=0$) 
has the form (\ref {HamKepler}), where $H=S^{(\ell)}_{j}$, $I=I_{\ell j}$,
 \begin{equation} \label {eq:Kepler:param}
 (\M,\p) =(\hy,\heeta), \quad 
 m= m_{\ell j}, \quad k=m_\ell,
 \quad r=r_{\ell j}, \quad \Omega=\Omega_{\ell j}.
 \end{equation}


\begin {Lem} [(normalizing the Kepler problem)] \label {lem3:2:}
Let $H$ and $I$ be the Hamilton function and the 
angular momentum first integral of the planar Kepler problem {\rm(\ref {HamKepler})}.
In the domain $\{r>0,\ p_\psi\ne0\}$ of the phase space of the
problem, consider the coordinates $\varphi\mod2\pi$, $I$, $q$, $p$ of
the form
 \begin{equation} \label {nov:koord}
 \varphi=\psi-\frac{2rp_r}{p_\psi}, \quad  I=p_\psi, \qquad q=\ln \frac{km^2r}{p^2_\psi}, \quad p=rp_r.
 \end{equation}
Here $r$, $\psi$ are the polar coordinates in the plane of motion,
$p_r$, $p_\psi=I$ are the corresponding momenta. Then:

{\rm(a)} the coordinates {\rm(\ref {nov:koord})} are canonical, i.e.\
$\oomega=d I\wedge d\varphi+dp\wedge dq$;

{\rm(b)} in the coordinates {\rm(\ref {nov:koord})}, the Hamilton
function $H$ of the Kepler problem does not depend on the angular
coordinate $\varphi\mod 2\pi$ and has the form
 $$
 \frac{H}{k^2m^3}=\frac{I^2+p^2}{2I^4e^{2q}}-\frac{1}{I^2e^q}
 =-\frac1{2I^2} + \frac{q^2}{2I^2} + \frac{p^2}{2I^4}
 + o(q^2+p^2)
 $$
as $q^2+p^2\to0$. Furthermore the involutions $S_l$, $S$ in
{\rm\Sec\ref {par:sym}} with $l=\bbR\times\{0\}\subset\bbR^2$ have the form
 \begin{equation} \label {0}
S_l(\varphi,I,q,p)=(-\varphi,-I,q,p), \quad S(\varphi,I,q,p)=(\varphi,-I,q,-p).
 \end{equation}
 \end{Lem}

One proves Lemma \ref {lem3:2:} in a direct way. \qed

The canonical coordinates $\varphi\mod 2\pi,I,q,p$ in Lemma~\ref
{lem3:2:}, as follows from their construction, are quite ``similar''
to the canonical coordinates $\psi\mod 2\pi,p_\psi=I,r,p_r$
corresponding to the polar coordinates $\psi\mod 2\pi,r$ in the
configuration space of the planar Kepler problem. For example, the
involutions $S_l$, $S$ have the form (\ref {0}) in the coordinates
$\psi,p_\psi,r,p_r$ too:
 $$
S_l(\psi,p_\psi,r,p_r)=(-\psi,-p_\psi,r,p_r), \quad
S(\psi,p_\psi,r,p_r)=(\psi,-p_\psi,r,-p_r).
 $$

For any $\Omega\neq0$, denote by $\gamma_\Omega$ the phase trajectory
of the Kepler problem corresponding to the circular motion with angular
velocity $\Omega$. The invariant two-dimensional surface
$\bigcup\limits_{\Omega\ne0}\gamma_\Omega$ in the phase space formed by all
these trajectories will be called {\it the surface of circular
motions}. This surface is smooth and consists of two connected
components, each of which is diffeomorphic to a punctured plane and
bijectively projects onto the configuration manifold under the
canonical projection.

We recall that the system describing the motion with respect to a
rotating coordinate system with angular velocity $\Omega$ is a
Hamiltonian system with the Hamiltonian function $H-\Omega I$. Thus
the circular motions correspond to the stationary points of such
systems.

\begin{Cor} \label {corlem3:2:}
The surface of circular motions of the planar Kepler problem is a region in the symplectic
coordinate cylinder with respecto to the coordinates $\varphi\mod2\pi$, $I$, $q$, $p$ from Lemma {\rm\ref {lem3:2:}}:
 $$
\bigcup_{\Omega\in\bbR\setminus\{0\}}\gamma_\Omega=\{(\varphi\mod2\pi,I,0,0)\mid I\ne0\} =S^1\times(\bbR\setminus\{0\})\times\{(0,0)\}.
 $$
For any $\Omega\in\bbR\setminus\{0\}$, at any point of the circle $\gamma_\Omega$, the differential of the function $H-\Omega I$  equals $0$, while the quadratic part \(i.e.\ the
quadratic form whose matrix is formed by the second partial
derivatives\) of this function 
has a diagonal form with respect to the coordinates {\rm(\ref {nov:koord})}:
 \begin{equation} \label {d2H}
\delta^2(H-\Omega I)|_{\gamma_\Omega}=-\frac{3}{mr^2}\delta I^2+
 {\Omega}\left(I\delta q^2+\frac{\delta p^2}{I}\right).
 \end{equation}
 \end{Cor}

\subsection {Constructing a family of periodic solutions of the Hill problem (the Moon theory)}
\label {subsec:Hill:0}

Рассмотрим частный случай плоской задачи трёх тел --- систему типа Солнце--Земля--Луна ($N=2$, $n=1$). Изучим в этом случае невозмущённую систему и построим её относительно-периодические решения. 

Пусть, как выше, ``масштабированная планета'' совершает круговое движение $(\ux_1^0(t),\uxxi_1^0(t))=(\mx_1^0(t),\mxxi_1^0(t))=
 e^{i\fomega_1 t} (\Omega_{10}^{-2/3},i\Omega_{10}^{1/3} m_1)$ с угловой скоростью $\fomega_1$. 
Тогда, согласно (\ref {eq:Hill}), невозмущённое движение ``спутника'' в синодической (т.е.\ вращающейся с угловой скоростью $\fomega_1$) системе координат описывается гамильтоновой системой с функцией Гамильтона и симплектической структурой 
 \begin{equation} \label {eq:unpert:Hill}
H_1
= \frac{\heeta^2}{2m} -\frac{km}{|\hy|} - \fomega_1 [\hy,\heeta] + \fomega_1^2 m \widehat F(\hy), 
 \qquad \oomega=d\heeta\wedge d\hy, 
 \end{equation}
с параметрами $m,k>0$ и $\fomega_1\ne0$, где $m:=m_{11}$, $k:=m_1$ (как в (\ref {eq:Kepler:param})), 
 \begin{equation} \label {eq:F}
\widehat F(\hy):=-\hyi_1^2+\hyi_2^2/2=F(\hx,\hy)|_{\hx=(1,0)}
 \end{equation}
--- потенциал Хилла (``предельный потенциал действия Солнца на спутник'').

Задача (\ref {eq:unpert:Hill}) при единичных значениях параметров $m=k=\fomega_1=1$ является {\it задачей Хилла}, с функцией Гамильтона и симплектической структурой
 \begin{equation} \label {eq:unpert:Hill'}
 \widetilde H_1 
 = \frac{\widetilde\heeta^2}{2}-\frac{1}{|\widetilde\hy|} - [\widetilde\hy,\widetilde\heeta] 
 + \widehat F(\widetilde\hy), 
 \qquad
 \widetilde\oomega=d\widetilde\heeta\wedge d\widetilde\hy.
 \end{equation}
Покажем, что задача (\ref {eq:unpert:Hill}) приводится к задаче Хилла (\ref {eq:unpert:Hill'}) некоторой заменой переменных. Дробно-степенное преобразование 
 $\widehat\hy=k^{-1/3}\hy$, 
 $\widehat\heeta=m^{-1}k^{-1/3}\heeta$, 
 $\widehat H_1=m^{-1}k^{-2/3}H_1$,
 $\widehat\oomega=m^{-1}k^{-2/3}\oomega$
приводит гамильтонову систему (\ref {eq:unpert:Hill}) к системе с функцией Гамильтона и симплектической структурой 
 \begin{equation} \label {eq:my:unpert:Hill}
 \widehat H_1=
 \frac{\widehat\heeta^2}{2}-\frac{1}{|\widehat\hy|} - \fomega_1 [\widehat\hy,\widehat\heeta] 
 + \fomega_1^2 \widehat F(\widehat\hy) ,
 \qquad \widehat\oomega = d\widehat\heeta\wedge d\widehat\hy
 \end{equation}
с параметром $\fomega_1>0$. Преобразование 
 $\widetilde\hy=\fomega_1^{2/3}\widehat\hy$, 
 $\widetilde\heeta=\fomega_1^{-1/3}\widehat\heeta$, 
 $\widetilde H_1=\fomega_1^{-2/3}\widehat H_1$,
 $\tilde t=\fomega_1t$,
 $\widetilde\oomega=\fomega_1^{1/3}\widehat\oomega$
приводит систему (\ref {eq:my:unpert:Hill}) к требуемой задаче Хилла (\ref {eq:unpert:Hill'}).

Рассмотрим ``стандартную'' (без параметров) ``синодическую задачу Кеплера'' с функцией Гамильтона и симплектической структурой
 \begin{equation} \label {eq:sin:Kepler:1}
 \frac{\widetilde\p^2}{2}-\frac{1}{|\widetilde\M|} - [\widetilde\M,\widetilde\p] , \qquad d\widetilde\p\wedge d\widetilde\q.
 \end{equation}
Её функция Гамильтона получена из функции Гамильтона $\widetilde H_1$ задачи Хилла (\ref {eq:unpert:Hill'}) откидыванием последнего слагаемого (т.е.\ при $\widehat F\equiv0$).
Рассмотрим 1-параметрическое семейство ``круговых'' решений
 \begin{equation} \label {eq:circ:sol:1}
 (\widetilde\q_{\widetilde\Omega}(\tilde t),\widetilde\p_{\widetilde\Omega}(\tilde t))
 = e^{i(\widetilde\Omega-1)\tilde t}(\widetilde\Omega^{-2/3},i\widetilde\Omega^{1/3})
 \end{equation}
задачи (\ref {eq:sin:Kepler:1}), где $\tilde t$ --- время, $\widetilde\Omega:=\Omega/\fomega_1$ --- отношение угловой скорости сидерического кругового вращения ``спутника'' к угловой скорости кругового вращения ``масштабированной планеты'' (которая здесь считается равной 1).

\begin {Exa} [(см.\ пример 1 в \cite{K:vest13})] \label {ex:Henon}
Многие семейства периодических решений задачи Хилла (\ref {eq:unpert:Hill'}) хорошо известны и изучены \cite {Bruno07}, причём некоторые из этих семейств ``достаточно близки'' к семейству круговых решений (\ref {eq:circ:sol:1}) синодической задачи Кеплера (\ref {eq:sin:Kepler:1}). Например, известны два 1-параметрических семейства $f$ и $g_+$ периодических решений задачи Хилла, параметром на каждом из которых служит гамильтониан $\widetilde H_1\in(-\infty,\infty)$, или минимальный положительный период
$\widetilde T\in(0,T_f)$ и $\widetilde T\in(0,T_{g_+})$ соответственно, где $T_f\approx2\pi$ и $T_{g_+}\approx4\pi$ \cite[tables 3, 4]{Henon69}, или отношение
 $\widetilde\Omega=1\pm\frac{2\pi}{\widetilde T}\ne0$ 
средней угловой частоты (сидерического) вращения ``спутника'' к угловой частоте вращения ``масштабированной планеты''. Оба семейства $f$ и $g_+$ начинаются (при $\widetilde\Omega\to-\infty$ или $\widetilde\Omega\to+\infty$) квази-круговыми орбитами вокруг точки $\widetilde\hy=0$, причём направление движения на первом семействе --- обратное (по часовой стрелке, $\widetilde\Omega<0$), а на втором --- прямое (против часовой стрелки, $\widetilde\Omega>0$). Согласно численному результату М.\ Хенона \cite[tables 11, 12]{Henon69}, семейства $f$ и $g_+$ ``достаточно близки'' к семейству круговых решений (\ref {eq:circ:sol:1}) синодической задачи Кеплера (\ref {eq:sin:Kepler:1}) на следующих начальных участках этих семейств: когда параметр $\Gamma:=-2\widetilde H_1$ семейства удовлетворяет оценке $\Gamma\ge1$ и $\Gamma\ge5$ соответственно (т.е.\ период решения удовлетворяет оценке $\widetilde T\le1.51822$ и $\widetilde T\le0.92296$ соответственно \cite[tables 3, 4]{Henon69}). Наш подход даёт построение начальных участков семейств $f$ и $g_+$ (см.\ (\ref {eq:sol:Hill}) ниже) и их ``достаточную близость'' к семейству круговых решений (\ref {eq:circ:sol:1}) при следующей оценке на период решения:
$\widetilde T\le2\pi\somega_0$, где $\somega_0\in(0,1)$ --- некоторая (достаточно малая) константа. Видимо, наша верхняя грань $2\pi\somega_0$ для периода меньше обоих значений $1.51822$ и $0.92296$, указанных Хеноном.
Движение Луны в системе Солнце-Земля-Луна приближённо соответствует решению из семейства $g_+$ с периодом $\widetilde T=2\pi\cdot\frac{28}{365}\approx\frac{2\pi}{13}\approx0.48$.
\end{Exa}

Рассмотрим задачу Хилла (\ref {eq:unpert:Hill'}) (к которой, как мы показали, сводится задача (\ref {eq:unpert:Hill}), описывающая невозмущённое движение ``спутника''). Мы хотим найти два 1-параметрических семейства её периодических решений
$(\widetilde\hy_{\widetilde\Omega}(\tilde t),\widetilde\heeta_{\widetilde\Omega}(\tilde t))$ при достаточно большом отношении средних частот $|\widetilde\Omega|\gg1$.

Наша основная идея состоит в следующем. Вместо задачи Хилла (\ref {eq:unpert:Hill'}) мы рассмотрим (эквивалентную ей) ``параметрическую задачу Хилла'' (\ref {eq:unpert:Hill}) или (\ref {eq:my:unpert:Hill}) с малым параметром $\fomega_1$ и построим (в (\ref {eq:sol:sin}) и (\ref {eq:my:sol'}) ниже) её периодические решения методами теории возмущений (см.\ лемму \ref {lem:mult}(A)).

Проведём построение искомого семейства $\frac{2\pi}{|\Omega-\fomega_1|}$-периодических решений (см.\ (\ref {eq:my:sol'}) ниже) задачи (\ref {eq:my:unpert:Hill}) 
с помощью леммы \ref {lem:mult}(A).

{\it Шаг 1.} Опишем сначала ``порождающие'' круговые решения. При откидывании последнего слагаемого (порядка $\fomega_1^2$) из функции Гамильтона $\widehat H_1$ задачи (\ref {eq:my:unpert:Hill}), т.е.\ при $\widehat F\equiv0$, получаем ``1-параметрическую синодическую задачу Кеплера'' (для ``спутника'') с гамильтонианом и симплектической структурой
$$
 \frac{\widehat\p^2}{2}-\frac{1}{|\widehat\M|} - \fomega_1 [\widehat\M,\widehat\p] , \qquad d\widehat\p\wedge d\widehat\q,
$$
с параметром $\fomega_1\in\bbR$. Рассмотрим семейство ``круговых'' решений этой задачи:
 \begin{equation} \label {eq:circ:sol}
 (\widehat\q_{\Omega,\fomega_1}(t),\widehat\p_{\Omega,\fomega_1}(t))
  = e^{i(\Omega-\fomega_1)t}(\Omega^{-2/3},i\Omega^{1/3}), \qquad \Omega,\fomega_1\in\bbR,\ \Omega\ne0,
 \end{equation}
где $\fomega_1$ и $\Omega$ --- угловые скорости сидерического кругового вращения ``масштабированной планеты'' и ``спутника'' соответственно. Круговое решение (\ref {eq:circ:sol}) является $\frac{2\pi}{|\Omega-\fomega_1|}$-периодическим, его фазовая траектория $\gamma_\Omega$ не зависит от $\fomega_1$.

{\it Шаг 2.} Рассмотрим ``1-параметрическую задачу Хилла'' (\ref {eq:my:unpert:Hill}) c малым параметром $\fomega_1$. Её фазовое пространство есть $M=(\bbR^2\setminus\{0\})\times\bbR^2$ с координатами $\widehat\hy,\widehat\heeta$. При $\fomega_1=0$ получаем задачу Кеплера $(M,\widehat\oomega,\widehat H:=\widehat H_1|_{\fomega_1=0})$.

Фиксируем число $\Omega\ne0$ (без ограничения общности можно положить $\Omega:=1$). 
Изоэнергетическая поверхность $\Pi\o=\Pi\o_{|\Omega|}:=\widehat H^{-1}(-\Omega^{2/3}/2)$ задачи Кеплера заполнена $\frac{2\pi}{|\Omega|}$-периодическими траекториями задачи Кеплера.
Рассмотрим окружность $\gamma_\Omega\subset\Pi\o$, т.е.\ фазовую траекторию задачи Кеплера, отвечающую круговому решению $(\widehat\q_{\Omega,0}(t),\widehat\p_{\Omega,0}(t))$ из (\ref {eq:circ:sol}) с угловой скоростью $\Omega$ (см.\ \S \ref {par3:1:5'}).
Согласно лемме \ref {lem3:2:} и ее следствию \ref {corlem3:2:}, 1-параметрическая задача Хилла (\ref {eq:my:unpert:Hill}) удовлетворяет условиям (\ref {eq:isoen}). Поэтому, согласно лемме \ref {lem:mult}(A),
существует столь малое число $\somega_0>0$ и гладкое 1-параметрическое семейство $\frac{2\pi}{|\Omega|}$-пе\-ри\-о\-ди\-чес\-ких траекторий (``возмущённой'') системы (\ref {eq:my:unpert:Hill}) с параметром $\fomega_1\in(-\somega_0,\somega_0)$, такое, что нулевому значению параметра $\fomega_1=0$ отвечает траектория кругового решения (т.е.\ окружность $\gamma_\Omega$), а при $\fomega_1\in(-\somega_0,0)\cup(0,\somega_0)$ траектория семейства является единственной $\frac{2\pi}{|\Omega|}$-пе\-ри\-о\-ди\-чес\-кой траекторией системы (\ref {eq:my:unpert:Hill}),
$O(\somega_0)$--близкой к окружности $\gamma_\Omega$. На каждой траектории этого семейства выберем параметризацию так, чтобы она задавала $\frac{2\pi}{|\Omega|}$-пе\-ри\-о\-ди\-чес\-кое решение системы (\ref {eq:my:unpert:Hill}), положение которого в начальный момент времени принадлежит оси абсцисс. Полученное 1-параметрическое семейство $\frac{2\pi}{|\Omega|}$-пе\-ри\-о\-ди\-чес\-ких решений системы (\ref {eq:my:unpert:Hill}) (при фиксированном $\Omega\ne0$) обозначим через 
 \begin {equation} \label {eq:my:sol}
(\widehat\hy_{\fomega_1}(t),\widehat\heeta_{\fomega_1}(t)), \qquad \fomega_1\in(-\somega_0,\somega_0).
 \end{equation}
Имеем $(\widehat\hy_{0}(t),\widehat\heeta_{0}(t))=(\widehat\q_{\Omega,0}(t),\widehat\p_{\Omega,0}(t))$. 

{\it Шаг 3.} Напомним (шаг 2),
что семейство (\ref {eq:my:sol})  $\frac{2\pi}{|\Omega|}$-пе\-ри\-о\-ди\-чес\-ких решений системы (\ref {eq:my:unpert:Hill}) построено для фиксированного числа $\Omega\ne0$. 
Пусть для определённости это число $\Omega:=1$. Для остальных $\Omega\ne0$ решения строятся так.

Из семейства (\ref {eq:my:sol}) при $\Omega:=1$ мы получаем два (обнаруженных ещё Хиллом \cite {H,W}) 1-пара\-мет\-ри\-чес\-ких семейства $\frac{2\pi}{|\widetilde\Omega-1|}$-пе\-ри\-о\-ди\-чес\-ких решений 
 \begin {equation} \label {eq:sol:Hill}
 \begin{array}{c} 
 \left(\widetilde\hy_{\widetilde\Omega}(\tilde t),\widetilde\heeta_{\widetilde\Omega}(\tilde t)\right) := \\ :=\left((\widetilde\Omega-1)^{-2/3}\widehat\hy_{(\widetilde\Omega-1)^{-1}}((\widetilde\Omega-1)\tilde t),\ (\widetilde\Omega-1)^{1/3}\widehat\heeta_{(\widetilde\Omega-1)^{-1}}((\widetilde\Omega-1)\tilde t)\right), \\ \widetilde\Omega\in(-\infty,1-\somega_0^{-1})\cup(1+\somega_0^{-1},+\infty),
 \end{array}
 \end{equation}
задачи Хилла (\ref {eq:unpert:Hill'}). 
Параметр первого семейства отрицателен, а параметр второго --- положителен. (Для получения формул (\ref {eq:sol:Hill}) надо положить $\fomega_1:=(\widetilde\Omega-1)^{-1}$ в (\ref {eq:my:sol}) и использовать преобразование $\widetilde\hy:=\fomega_1^{2/3}\widehat\hy$,  $\widetilde\heeta:=\fomega_1^{-1/3}\widehat\heeta$, $\tilde t:=\fomega_1t$, приводящее задачу (\ref {eq:my:unpert:Hill}) к задаче Хилла.)
Семейства решений (\ref {eq:sol:Hill}) --- это (обнаруженные Хиллом \cite {H,W}) начальные участки хорошо известных и изученных семейств $f$ и $g_+$ периодических решений задачи Хилла \cite {Bruno07}. В каждом из семейств $f$ и $g_+$ интервал изменения периода содержит указанный нами интервал $(0,2\pi\somega_0)$ (см.\ также 
пример \ref {ex:Henon}). Оба эти семейства порождают семейства периодических решений круговой ограниченной задачи трёх тел (см.\ \cite {brown1892,32}, \cite[\S 17--19]{33}, \cite {Perko83,Bruno90,Bruno07}) и относительно-периодических решений задачи трёх тел (см.~\cite {32} и \cite[\S 18--19]{33}).

Из двух семейств (\ref {eq:sol:Hill}) решений задачи Хилла получаем искомое 2-пара\-мет\-ри\-чес\-кое семейство $\frac{2\pi}{|\Omega-\fomega_1|}$-пе\-ри\-о\-ди\-чес\-ких решений системы (\ref {eq:my:unpert:Hill}):
 \begin{equation} \label {eq:my:sol'}
 \begin{array}{c} \widehat\gamma_{\Omega,\fomega_1}(t)=\left(\widehat\hy_{\Omega,\fomega_1}(t),\widehat\heeta_{\Omega,\fomega_1}(t)\right):= \\ :=\left((\Omega-\fomega_1)^{-2/3}\widehat\hy_{\frac{\fomega_1}{\Omega-\fomega_1}}((\Omega-\fomega_1)t),\  (\Omega-\fomega_1)^{1/3}\widehat\heeta_{\frac{\fomega_1}{\Omega-\fomega_1}}((\Omega-\fomega_1)t)\right), \\
 \Omega\ne0, \quad \frac{\fomega_1}\Omega \in
 (-\frac{\somega_0}{1-\somega_0},\frac{\somega_0}{1+\somega_0}).
 \end{array}
 \end{equation}
(Для этого надо положить $\widetilde\Omega:=\Omega/\fomega_1$ в (\ref {eq:sol:Hill}) и использовать обратное преобразование переменных.) Отметим, что решение (\ref {eq:my:sol}) равно $\widehat\gamma_{1+\fomega_1,\fomega_1}(t)$.

\begin {scRem} \label {rem:Hill}
Построенное (с помощью технической леммы \ref {lem:mult}(A)) 2-параметрическое семейство решений (\ref {eq:my:sol'}) ``1-параметрической задачи Хилла'' (\ref {eq:my:unpert:Hill}) гладко зависит от параметров $\Omega,\fomega_1$ и при $\fomega_1=0$ совпадает с семейством круговых решений (\ref {eq:circ:sol}). 
В частности, эти два семейства $\frac{2\pi}{|\Omega-\fomega_1|}$-пе\-ри\-о\-ди\-чес\-ких решений, (\ref {eq:my:sol'}) и (\ref {eq:circ:sol}), являются $O(\fomega_1)$--близкими. Покажем, что из технической леммы \ref {lem:mult}(B) следует, что эти семейства даже $O(\fomega_1^2)$--близки. Рассмотрим функцию $\langle\widehat F\o\rangle$ на изоэнергетической поверхности $\Pi\o$ задачи Кеплера (см.\ шаг 2),
полученную усреднением функции $\widehat F\o:=\widehat F|_{\Pi\o}$ по $\frac{2\pi}{\Omega}$-периодическим решениям задачи Кеплера $(M,\widehat\oomega,\widehat H)$. Несложным вычислением, с учётом (\ref {eq:F}) и следствия \ref {corlem3:2:}, проверяется, что окружность $\gamma_\Omega\subset\Pi\o$ является критическим множеством функции $\langle\widehat F\o\rangle$. Поэтому из леммы \ref {lem:mult}(B) следует требуемая оценка $\widehat\gamma_{\Omega,\fomega_1}(t)=(\widehat\q_{\Omega,\fomega_1}(t),\widehat\p_{\Omega,\fomega_1}(t))+O(\fomega_1^2)$.
\end {scRem}

Таким образом, мы построили следующее невозмущённое синодическое (т.е.\ описываемое задачей (\ref {eq:unpert:Hill})) $\frac{2\pi}{|\Omega-\fomega_1|}$-периодическое движение ``спутника'':
 \begin{equation} \label {eq:sol:sin}
(\hy^0(t),\heeta^0(t)) := k^{1/3}(\widehat\hy_{\Omega,\fomega_1}(t),m\widehat\heeta_{\Omega,\fomega_1}(t)),
 \quad  
 \frac{\fomega_1}\Omega \in
 \left(-\frac{\somega_0}{1-\somega_0},\frac{\somega_0}{1+\somega_0}\right),
 \end{equation}
см.\ (\ref {eq:my:sol'}), (\ref {eq:my:sol}). Это решение отвечает круговому движению ``масштабированной планеты'' с угловой скоростью $\fomega_1$ и сидерическому движению ``спутника'' со средней уговой частотой $\Omega$. Поэтому искомое невозмущённое (сидерическое) движение ``спутника'' для задачи Солнце-Земля-Луна можно положить равным
 $$ 
(\uy_{11}^0(t),\ueeta_{11}^0(t)):=e^{i\fomega_1t}
m_1^{1/3}\left(\widehat\hy_{\Omega_{11},\fomega_1}(t),m_{11}\widehat\heeta_{\Omega_{11},\fomega_1}(t)\right),  \
 \frac{\fomega_1}{\Omega_{11}-\fomega_1} \in (-\somega_0,\somega_0).
 $$

\subsection {Constructing families of relatively-periodic solutions of the unperturbed system}
\label {subsec:Hill}

Рассмотрим невозмущённую систему (\ref {eq:unpert0}), (\ref {eq:unpert1}) для задачи $N+1$ тел с любым числом планет и спутников, $N\ge n\ge1$. Положим 
 $$
(\uy_{\ell j}^0(t),\ueeta_{\ell j}^0(t)):=e^{i\fomega_\ell t}
m_\ell^{1/3}(\widehat\hy_{\Omega_{\ell j},\fomega_\ell}(t),m_{\ell j}\widehat\heeta_{\Omega_{\ell j},\fomega_\ell}(t)), 
 \quad 
 \frac{\fomega_\ell}{\Omega_{\ell j}-\fomega_\ell} \in (-\somega_0,\somega_0),
 $$
при $1\le j\le n_\ell$, 
см.\ (\ref {eq:my:sol'}), (\ref {eq:my:sol}). Построенное нами решение $(\ux^0(t),\uxxi^0(t))=(\ux_{*}^0(t),\uy_{**}^0(t),\uxxi_{*}^0(t),\ueeta_{**}^0(t))$ является искомым решением невозмущённой системы (\ref {eq:unpert0}), (\ref {eq:unpert1}), $O(\somega)$--близ\-ким к порождающему решению $(\mx^0(t),\mxxi^0(t))$. Более того, как выведено в замечании \ref {rem:Hill} из леммы \ref {lem:mult}(B), решение $(\ux^0(t),\uxxi^0(t))$ даже $O(\somega^2)$--близко к порождающему решению $(\mx^0(t),\mxxi^0(t))$.

\section {Constructing relatively-periodic solutions of the perturbed system
(\ref {eq:pert:planets})} \label {sec:pert}

В этом параграфе мы начнём вывод теоремы \ref {th:ust}(A) из леммы \ref {lem:mult}.

Как и в \S\ref {par3:1:5}, фиксируем числа $m_i,m_{ij}>0$, $\somega\in(0,\somega_0]$ и набор ``угловых частот'' $\fomega_i,\Omega_{ij}\in\bbR\setminus\{0\}$ вида (\ref {chast}), (\ref {poryadki:chastot'}). Предположим, что набор частот является ``относительно резонансным'', т.е.\ имеет вид (\ref {maxrez}), (\ref {parametry}) (это условие автоматически выполнено при $N=2$, т.е.\ в задаче 3 тел).

Пусть $(\mx^0(t),\mxxi^0(t))=(\mx_*^0(t),\my_{**}^0(t),\mxxi_*^0(t),\meeta_{**}^0(t))$ --- основное порождающее решение (\ref {eq:gen:sol}) модельной системы (\ref {eq:model:H:omega}), отвечающее этому набору частот. Пусть $(\ux^0(t),\uxxi^0(t))=(\ux_*^0(t),\uy_{**}^0(t),\uxxi_*^0(t),\ueeta_{**}^0(t))$ --- построенное в \S \ref {par3:1:5} $(T,\alpha)$-пери\-о\-ди\-ческое решение невозмущённой системы (\ref {eq:unpert0}), (\ref {eq:unpert1}), $O(\somega)$--близкое к решению $(\mx^0(t),\mxxi^0(t))$ и такое, что $(\ux_*^0(t),\uxxi_*^0(t))=(\mx_*^0(t),\mxxi_*^0(t))$ и радиус-векторы $\uy_{ij}^0(0)$ сонаправлены с осью абсцис. Рассмотрим тор $\Lambda$, образованный фазовыми траекториями $(T,\alpha)$-периодических решений $(\ux^\beta(t),\uxxi^\beta(t))$ невозмущённой системы, $\beta=(\beta_*,\beta_{**})\in(\bbR/2\pi\bbZ)^N$ (см.\ (\ref {eq:unpert:sol})).
В \S \ref {par3:1:5} мы доказали выполнение всех условий леммы \ref {lem:mult}
для данной невозмущённой системы. Поэтому применима лемма \ref {lem:mult},
и из неё легко следует, что тор $\Lambda$ обладает свойствами, указанными в теореме \ref {th:mult}.

Предположим, что набор частот удовлетворяет условию невырожденности (\ref {alpha}) или более тонкому условию (\ref {eq:fine}) из теоремы \ref {th1}, где в более тонком условии (\ref {eq:fine}) определим константу $C>0$ равной константе $C_2>0$, отвечающей тору $\Lambda$ согласно теореме \ref {th:mult}. 
Если выполнено условие невырожденности (\ref {alpha}), то при $0<\somega\le3c/(4C)$ выполнено (ввиду (\ref {poryadki:chastot'})) более тонкое условие (\ref {eq:fine}). Поэтому можно считать, что выполнено более тонкое условие (\ref {eq:fine}).

Изучим возмущённую систему (\ref {eq:pert:planets}) вблизи тора $\Lambda$. При фиксированных $m_i,m_{ij},\somega,\fomega_i,\Omega_{ij}$ система зависит от $4$ параметров $\varepsilon,\mu,\nu,\rho\in\bbR$, а тор $\Lambda$ фиксирован. Мы хотим вывести существование (при фиксированных $m_i,m_{ij},\somega,\fomega_i,\Omega_{ij}$) числа $\mu_0>0$ и гладкого $4$-параметрического семейства торов $\widetilde\Lambda$ с параметрами $\varepsilon,\mu,\nu,\rho$, $|\varepsilon|+|\mu|+|\nu|+|\rho|\le\mu_0$, обладающего нужными нам свойствами.

Проведём вывод аналогично выводу теоремы \ref {th1'} из технической теоремы \ref {th:mult}, методом производящей функции (см.\ \S\ref {subsec:ideas}, второй этап, шаги 1--5).

{\it Шаг 1.} Согласно методу производящей функции, нужно проверить, что тор $\Lambda$ обладает свойством невырожденности (\ref {eq:nondegL}). Это свойство мы вывели (в \S\ref {subsec:ideas}, шаг 1) из теоремы \ref {th:mult} и более тонкого условия (\ref {eq:nondeg'}) из теоремы \ref {th1'}.

Проверим условие (\ref {eq:nondeg'}). Его первая часть равносильна первой части (\ref {eq:fine}). Для проверки второй части (\ref {eq:nondeg'}) найдём число $\Delta_{ij}$ в (\ref {eq:Delta:ij}). В силу (\ref {f0}) имеем $F_{ij}=F_{ij}(\px_i,\py_{ij})=m_{ij}F(\px_i,\py_{ij})$. 
По построению $\widehat F_{ij}\o=(F_{ij}(\px_i\o,\cdot))|_{H_{ij}^{-1}(H_{ij}(I_{ij}\o,0,0))}$,
где $\px_i\o=\const$, $|\px_i\o|=R_i/R=|\Omega_{i0}|^{-2/3}$ (см.\ теорему
\ref {th1'}). Пусть $\langle\widehat F_{ij}\o\rangle=\langle\widehat F_{ij}\o\rangle(q_{ij},p_{ij})$ --- функция, полученная усреднением потенциала Хилла $\widehat F_{ij}\o=\widehat F_{ij}\o(q_{ij},p_{ij})$ по $\frac{2\pi}{\Omega_{ij}}$-периодическим решениям задачи Кеплера $(M_{ij}=S^1\times(\bbR\setminus\{0\})
\times\bbR^2,\oomega_{ij}=dI_{ij}\wedge d\varphi_{ij}+dp_{ij}\wedge dq_{ij},H_{ij}=S^{(i)}_j)$ для ``спутника'', где $\varphi_{ij},I_{ij},q_{ij},p_{ij}$ --- ``нормализующие'' канонические координаты для рассматриваемой задачи Кеплера (см.\ лемму \ref {lem3:2:}). Согласно замечанию \ref {rem:Hill}, $d\langle\widehat F_{ij}\o\rangle(0,0)=0$. Не\-слож\-ным вычислением
находим также матрицу Гесса функции $\langle\widehat F_{ij}\o\rangle$ в нуле:
 $$
   \frac{\partial^2\langle\widehat F_{ij}\o\rangle(0,0)}{\partial(q_{ij},p_{ij})^2}
   = \Omega_{i0}^2 \frac{I_{ij}}{\Omega_{ij}}
 \left( \begin{array}{cc} -29/8 & 0 \\
                              0 & 25/(8I_{ij}^2) \end{array}\right).
 $$
Отсюда с учетом (\ref {d2H}) имеем
 $$
 \Delta_{ij}
 = \frac{\Omega_{ij}}2 \Tr\left(\left
 (\frac{\partial^2H_{ij}(I_{ij}\o,0,0)}{\partial(q_{ij},p_{ij})^2}\right)^{-1}
 \frac{\partial^2\langle\widehat F_{ij}\o\rangle(0,0)}{\partial(q_{ij},p_{ij})^2}\right)
 =
 $$
 \begin{equation} \label {eq:Delta}
 = \frac{\Omega_{i0}^2}{2\Omega_{ij}}
 \Tr\left(
 \left( \begin{array}{cc} 1 & 0 \\
                          0 & I_{ij}^2 \end{array}\right)
 \left( \begin{array}{cc} -29/8 & 0 \\
                              0 & 25/(8I_{ij}^2) \end{array}\right)
 \right)
 = - \frac{\Omega_{i0}^2}{4\Omega_{ij}}.
 \end{equation}
Поэтому требуемая вторая часть условия (\ref {eq:nondeg'}) имеет вид
 $$
 \alpha+\Delta_{ij}\somega^2T = \alpha-\frac{\fomega_i^2}{4\Omega_{ij}}T \not\in\left[-C_2\somega^3T,C_2\somega^3T\right]+2\pi\bbZ,
 $$
т.е.\ равносильна второй части условия (\ref {eq:fine}) с константой $C:=C_2$.

Итак, выполнено более тонкое условие невырожденности (\ref {eq:nondeg'}) из теоремы \ref {th1'}. Отсюда следует (см.\ выше) свойство невырожденности (\ref {eq:nondegL}) тора $\Lambda$.

{\it Шаги 2--5.} Из явного построения функций $\widetilde H_0,\widetilde H_1,\widetilde\Phi$ (см.\ \S\ref {par3:1:4}) видно, что они являются $S^1$-инвариантными. Возмущающий потенциал $\widetilde\Phi$ является аналитической функцией в окрестности любого тора $\Lambda\o$, на котором набор угловых частот $\Omega_{ij}$ удовлетворяет условиям (\ref {poryadki:chastot'}) и (\ref {poryadki:chastot''}) ``отсутствия столкновений''. Главная часть $\Phi:=\widetilde\Phi|_{\mu=\nu=\rho=0}$ возмущающего потенциала имеет вид (\ref {eq:Phi}). 

Согласно методу производящей функции (шаги 2--5), ввиду невырожденности (\ref {eq:nondegL}) тора $\Lambda$, искомое семейство торов $\widetilde\Lambda$ можно определить формулой (\ref {eq:tildeL}). Поэтому из леммы \ref {lem:mult}
и результатов следующего \S\ref{par3:1:4} вытекает теорема \ref {th:ust}(A) о существовании семейства торов $\widetilde\Lambda$ с нужными свойствами.
 \qed

Попутно мы показали, что теорему \ref {th:ust}(A) можно вывести также из технической теоремы \ref {th:mult} (или из теоремы \ref {th1'}) и результатов следующего \S\ref{par3:1:4}.

Отметим, что в частном случае $N=2$, $n=1$ (система Солнце-Земля-Луна) наш результат показывает, что начальные участки (\ref {eq:sol:Hill}) семейств $f$ и $g_+$ периодических решений задачи Хилла порождают начальные участки семейств периодических решений ограниченной задач трёх тел (случай $\nu=0$) и семейств относительно-периодических решений задачи трёх тел. Аналогичные участки семейств решений ограниченной задачи трёх тел (даже без ограничений на параметр $\widetilde\mu=\frac{\mu m_1}{1+\mu m_1}\in(0,1)$) были найдены Брауном \cite {brown1892} методом Хилла \cite {H,W} разложения решений в ряд (см.\ также \cite{32}, \cite[\S 17--19]{33}, \cite {Perko83}). Аналогичные участки семейств решений задачи трёх тел (даже без ограничений на параметры $\widetilde\mu=\frac{\mu(m_1+\nu m_{11})}{1+\mu(m_1+\nu m_{11})},\theta=\frac{\nu m_{11}}{m_1+\nu m_{11}}\in[0,1]$) были найдены Мультоном \cite {32} методом малого параметра Пуанкаре (см.\ также \cite[\S 18--19]{33}).

\section{Reducing the $N+1$ body problem to the perturbed system (\ref {eq:pert:planets})
} \label {par3:1:4}

Рассмотрим на $(\bbR^2\setminus\{0\})\times\bbR^2$ 
гладкую функцию $F=F(\hx,\hy)$, определяемую формулой (\ref {F:xy}), см.\ (\ref {eq:unpert1}).
Напомним (см.\ (\ref {eq:Hill})), что функцию $F(\hx,\hy)|_{\hx=(1,0)}=-\hyi_1^2+\hyi_2^2/2$, а также функцию $F(\hx,\hy)$, мы называем {\it потенциалом Хилла} (или ``предельным потенциалом действия Солнца на спутник'').


\begin{scRem} (A) The Hill potential $F(\x,\y)$ is in fact the third
coefficient of the power series of the function $-\frac1{|\x+\rho\y|}$
in the variable $\rho$ at zero:
\begin{equation} \label {rho:to0}
 \frac{1}{|\x+\rho\y|}=\frac1{\sqrt{\x^2+2\rho\langle\x,\y\rangle+\rho^2\y^2}}=:
 \frac{1}{|\x|}-\rho\frac{\langle\x,\y\rangle}{|\x|^3} -
 \rho^2 F_{0,\rho}(\x,\y),
\end{equation}
 $$
F_{0,\rho}(\x,\y) = F(\x,\y)-
 \rho\langle\x,\y\rangle\frac{3\x^2\y^2-5\langle\x,\y\rangle^2}{2|\x|^7}-
 $$
 \begin{equation} \label {tildeF0} -\rho^2\frac{3\x^4\y^4-30\x^2\langle\x,\y\rangle^2\y^2+35\langle\x,\y\rangle^4}{8|\x|^9}+
 \dots, \quad \rho\to 0.
 \end{equation}
A more general analytic potential
 \begin{equation} \label {eq:F:theta:rho}
F_{\theta,\rho}(\x,\y)=\theta F_{0,-\theta\rho}(\x,\y)+(1-\theta)F_{0,(1-\theta)\rho}(\x,\y)
 \end{equation}
appears in the three-body problem (with $0<\theta<1$,
$\mu,\omega,\rho>0$ and (\ref{nov:*})) and in the {\it restricted
three-body problem} (with $\theta=0$, $\mu,\omega,\rho>0$ and (\ref
{nov:*})), see~\Sec\ref {par3:1:4''} and (\ref {tildeF}). We remark
that
 $\frac{\partial F}{\partial\y}(\x,\y)=\frac{\x^2\y-3\langle\x,\y\rangle\x}{|\x|^5}$.

(B) The unperturbed system (\ref {3:7}) shows that the variables
$\x_i$ and $\y_{ij}$ are automatically slow and fast variables
respectively, provided that $\omega$ is small.
 \end{scRem}

Let $\px_i$, $\py_{ij}$, $\pxxi_i$, $\peeta_{ij}$ be the coordinates (\ref {eq:zamena}) in the phase space $T^*\widehat Q$, $1\le i\le n$, $1\le j\le n_i$. Denote
\begin{equation} \label {m}
 \bar m_i=m_i+\nu\sum^{n_i}_{j=1}m_{ij}, \quad
 \widetilde m_i=\frac{\bar m_i}{1+\mu\bar m_i}, \quad
 \bar m_{ij}=\frac{m_{ij}}{m_i}, \quad
 \widetilde m_{ij}=\frac{m_{ij}m_i}{m_i+\nu m_{ij}}
\end{equation}
the total mass of the $i$th satellite system,
and the ``reduced'' masses of planets and satellites. Introduce the
following functions on $T^*\widehat Q$: the Hamilton functions
 \begin{equation} \label {KS}
 \widetilde K_i=\frac{\xxi^2_i}{2\widetilde m_i}-\frac{\bar m_i}{|\x_i|}, \qquad
 \widetilde S^{(i)}_j=
 \frac{\eeta^2_{ij}}{2\widetilde m_{ij}}-\frac{m_im_{ij}}{|{\y}_{ij}|}
 \end{equation}
of the Kepler problems, the angular momenta
 \begin{equation} \label{eq:I}
I_{i0}=[\x_i,\xxi_i], \qquad I_{ij}=[\y_{ij},\eeta_{ij}],
 \qquad 1\le i\le n, \ \ 1\le j\le n_i
 \end{equation}
of ``scaled planets'' and ``satellites'', and the ``perturbation functions''
 \begin{equation} \label {tilde:KS}
 K_{ii'}=\langle\xxi_i,\xxi_{i'}\rangle-
 \frac{\bar m_i\bar m_{i'}}{|\x_i-\x_{i'}|},
 \qquad
 S^{(i)}_{jj'}=\frac{\langle\eeta_{ij},\eeta_{ij'}\rangle}{m_i}-
 \frac{m_{ij}m_{ij'}}{|\y_{ij}-\y_{ij'}|},
 \end{equation}
$1\le i<i'\le n$, $1\le j<j'\le n_i$, of the planetary system and
the satellite systems, respectively (corresponding to pair-wise
interactions of planets, respectively satellites of the same planet).

As a ``perturbation potential'', let us consider the function
 \begin{equation} \label {G}
\widetilde\Phi=\widetilde\Phi(\px_*,\py_{**},\bar m_*,\bar m_{**},\mu,\nu,\rho):=\sum_{i=1}^n\bar m_i\Phi_i+
 \mu\sum_{1\le i<i'\le n}
 \bar m_i\bar m_{i'}\Phi_{ii'}
 \end{equation}
in the configuration variables $\px_i,\py_{ij}\in\bbR^2$ and the parameters
$\bar m_i,\bar m_{ij},\mu,\nu,\rho\in\bbR$. Here the functions
$\Phi_i=\Phi_i(\px_i,\py_{i*},\bar m_{i*},\nu,\rho)$ and 
$\Phi_{ii'}=\Phi_{ii'}(\px_i-\px_{i'},\py_{i*},\py_{i'*},
 \bar m_{i*},\bar m_{i'*},\nu,\rho)$ are defined by the formulae
 \begin{equation} \label {tildeFi}
 \Phi_i
 :=
 \frac{1}{\nu\rho^2}\left(\frac{1}{|\px_i|} - \frac{m_i/\bar m_i}{|\px_i-\nu\rho\ddelta_i|}
 - \nu\sum_{j=1}^{n_i} \frac{m_{ij}/\bar m_i}{|\px_i+\rho\py_{ij}-\rho\nu\ddelta_i|}\right),
 \end{equation}
 $$
\Phi_{ii'}
 :=
 \frac{1}{\nu\rho^2\bar m_i\bar m_{i'}} 
 \left(
 \frac{\bar m_i\bar m_{i'}}{|\px_i-\px_{i'}|}
 - \sum_{j=1}^{n_i}\frac{\nu m_{i'}m_{ij}}{
            |\px_i-\px_{i'}+\rho\py_{ij}-\rho\nu(\ddelta_i-\ddelta_{i'})|}
 -
 \right.
 $$
 $$
 - \frac{m_im_{i'}}{|\px_i-\px_{i'}-\nu\rho(\ddelta_i-\ddelta_{i'})|}
 - \sum_{j'=1}^{n_{i'}}\frac{\nu m_im_{i'j'}}{
         |-\px_i+\px_{i'}+\rho\py_{i'j'}+\rho\nu(\ddelta_i-\ddelta_{i'})|}
 -
 $$
 \begin{equation} \label {tildeFii}
 \left.
 - \sum_{j=1}^{n_i}\sum_{j'=1}^{n_{i'}}\frac{\nu^2m_{ij}m_{i'j'}}{
  |\px_i-\px_{i'}+\rho(\py_{ij}-\py_{i'j'})-\rho\nu(\ddelta_i-\ddelta_{i'})|}
 \right).
 \end{equation}
Here 
 \begin{equation} \label {delta:i}
\ddelta_i:=\sum_{j=1}^{n_i}\frac{m_{ij}}{\bar m_i}\py_{ij} =
\sum_{j=1}^{n_i}\bar m_{ij}\py_{ij}/(1+\nu\sum_{j=1}^{n_i}\bar m_{ij})
 \end{equation}
is the radius vector drawn from a planet to the centre of masses of the system of
its satellites, multiplied by $(\sum_{j=1}^{n_i}m_{ij})/\bar
m_i$. One easily shows (see (\ref {rho:to0})) that the function
$\Phi_i$ is analytic in all its variables in the region
\begin{equation} \label {oblast}
\left\{ 1+\nu\sum_{j'=1}^{n_i}\bar m_{ij'}\ne0,
 \quad |\rho|\left(1+|\nu|\sum_{j'=1}^{n_i}|\bar m_{ij'}|\right)|\y_{ij}|<|\x_i|
\right\}_{j=1}^{n_i},
\end{equation}
while the function $\Phi_{ii'}$ is analytic in all its variables in the domain
\begin{equation} \label {oblast}
\left\{ 1+\nu\sum_{j'=1}^{n_i}\bar m_{ij'}\ne0,
 \quad |\rho|\left(1+|\nu|\sum_{j'=1}^{n_i}|\bar m_{ij'}|\right)|\py_{ij}|<|\px_i|
\right\}_{j=1}^{n_i},
\end{equation}
while the function $\Phi_{ii'}$ is analytic in all its variables in the domain
\begin{equation} \label {oblast'}
\left\{
 \begin{array}{c}
  1+\nu\sum_{j=1}^{n_i}\bar m_{ij}\ne0, \quad
  1+\nu\sum_{j'=1}^{n_{i'}}\bar m_{i'j'}\ne0, \quad
     \\
 |\rho|(1+|\nu|\sum_{j''=1}^{n_i}|\bar m_{ij''}|)|\py_{ij}|<\frac{|\px_i|}{2},
     \quad 1\le j\le n_i, \\
 |\rho|(1+|\nu|\sum_{j''=1}^{n_{i'}}|\bar m_{i'j''}|)|\py_{i'j'}|<\frac{|\px_{i'}|}{2},
  \quad 1\le j'\le n_{i'}
 \end{array}
\right\}.
\end{equation}
The functions $\Phi_{ii'}$ are expressed in terms of $\Phi_1,\dots,\Phi_n$ as
follows:
 $$
\Phi_{ii'}=\frac{m_{i'}}{\bar m_{i'}}\Phi_i(\px_i-\px_{i'}+\nu\rho\ddelta_{i'},\py_{i*},\bar m_{i*},\nu,\rho)
  +\Phi_{i'}(\px_{i'}-\px_i,\py_{i'*},\bar m_{i'*},\nu,\rho) +
 $$
 \begin{equation} \label {eq:Gii} 
 +\nu\sum_{j'=1}^{n_{i'}}\frac{m_{i'j'}}{\bar m_{i'}}\Phi_i(\px_i-\px_{i'}-\rho\py_{i'j'}+\nu\rho\ddelta_{i'},\py_{i*},\bar m_{i*},\nu,\rho).
 \end{equation}
We set $\Phi_i:=0$ if $n_i=0$ (i.e.\ the $i$th planet has no
satellites), and we set $\Phi_{ii'}:=0$ if $n_i=n_{i'}=0$. If $n_i=1$
(i.e.\ the $i$th planet is a double planet, $\theta_i:=\nu
m_{i1}/(m_i+\nu m_{i1})$) then we have
$\Phi_i=\frac{\theta_i(1-\theta_i)}\nu F_{\theta_i,\rho}(\px_i,\py_{i1})$ and
 $$
 \Phi_{ii'}
 =
 \Phi_{i'}(\px_{i'}-\px_i,\py_{i'*},\bar m_{i'*},\nu,\rho)
 + \frac{\theta_i(1-\theta_i)}{\nu{\bar m_{i'}}/{m_{i'}}} F_{\theta_i,\rho}(\px_i-\px_{i'}+\nu\rho\ddelta_{i'},\py_{i1} ) +
 $$
 $$
 +{\theta_i(1-\theta_i)}\sum_{j'=1}^{n_{i'}}\frac{m_{i'j'}}{\bar m_{i'}} F_{\theta_i,\rho}(\px_i-\px_{i'}-\rho\py_{i'j'}+\nu\rho\ddelta_{i'},\py_{i1} ).
$$

 \begin{Lem}[(equivalence of the $N+1$ body problem to an $\varepsilon$-Hamiltonian system)] \label {lem3:1}
Let $\widehat Q$ be the $2N$-dimensional vector space formed by all
configurations of $N+1$ particles with masses {\rm(\ref {A})} and the
centre of masses at the origin in a Euclidean plane. Define linear
coordinates on $\widehat Q$ to be the collection of radius vectors
$\x_i,\y_{ij}:\widehat Q\to\bbR^2$, $1\le i\le n$, $1\le j\le n_i$ {\rm(see
(\ref {a}), (\ref {b}))}. There exists a collection of linear
functions $\xxi_i,\eeta_{ij}:\widehat Q^*\to\bbR^2$, $1\le i\le n$, $1\le j\le
n_i$ \(momenta\) having the following properties. In the
coordinates $\x_i,\y_{ij},\xxi_i,\eeta_{ij}$ on $T^*\widehat Q\cong \widehat Q\times
\widehat Q^*$, the canonical symplectic structure $\oomega=d\p\wedge d\q$, the
Hamiltonian function $H$ of the $N+1$ body problem and the first
integral $I$ of angular momentum {\rm(see (\ref {H}) and (\ref {M}))}
have the form
 \begin{equation}\label {eq:HM}
 \oomega=\frac\rho{\omega}\widetilde\oomega, \qquad
 H=\frac\rho{\omega}\widetilde H, \qquad
 I=\frac\rho{\omega}\widetilde I
 \end{equation}
where
 \begin{equation} \label {vozmHM}
 \widetilde\oomega=\oomega_0+\varepsilon\oomega_1, \qquad
 \widetilde H=\omega\widetilde H_0+\varepsilon\widetilde H_1+\omega^2\varepsilon\widetilde\Phi, \qquad
 \widetilde I=I_0+\varepsilon I_1,
 \end{equation}
$$
 \oomega_0=d\xxi\wedge d\x=\sum_{i=1}^nd\xxi_i\wedge d\x_i, \quad
 \oomega_1=d\eeta\wedge d\y=\sum_{i=1}^n\sum_{j=1}^{n_i}d\eeta_{ij}\wedge d\y_{ij},
$$
 \begin{equation} \label {H0}
 \widetilde H_0= \sum^n_{i=1}\widetilde K_i+ \mu\sum_{1\le i<i'\le n}K_{ii'}, \quad
 \widetilde H_1= \sum^n_{i=1}\left(\sum^{n_i}_{j=1}\widetilde S^{(i)}_j+ \nu\sum_{1\le j<j'\le n_i}S^{(i)}_{jj'}\right),
 \end{equation}
 \begin{equation} \label {M0}
 I_0=[\x,\xxi]=\sum_{i=1}^nI_{i0}, \qquad
 I_1=[\y,\eeta]=\sum_{i=1}^n\sum_{j=1}^{n_i}I_{ij},
 \end{equation}
{\rm see (\ref {KS}), (\ref {eq:I}), (\ref {tilde:KS})}. Here the
small parameters $0<\omega,\varepsilon,\mu,\nu,\rho\ll1$ are related
by the conditions $\rho=\omega^{2/3}\mu^{1/3}$ and
$\varepsilon=\omega^{1/3}\mu^{2/3}\nu=\nu\rho^2/\omega$,
$\rho=\frac1R$. The ``perturbation potential''
$\widetilde\Phi=\widetilde\Phi(\x_*,\y_{**},\bar m_*,\bar
m_{**},\mu,\nu,\rho)$ has the form {\rm(\ref {G})}, is an analytic
function on the direct product of the regions
 $$
\left\{
 \begin{array}{c}
  1+\nu\sum_{j'=1}^{n_i}\bar m_{ij'}\ne0,
  \\
  |\rho|\left(1+|\nu|\sum_{j'=1}^{n_i}|\bar m_{ij'}|\right)|\y_{ij}|<
  \min\!\left(|\x_i|,\frac{1}{2}\min_{i'\ne i}|\x_i-\x_{i'}|\right)
 \end{array}
\right\}_{j=1}^{n_i},
 $$
$1\le i\le n$, and satisfies the condition
 \begin{equation} \label {f0}
\widetilde\Phi|_{\nu=\rho=0}=\sum^n_{i=1} \sum^{n_i}_{j=1} m_{ij}
 \left(
 F(\x_i,\y_{ij})+\mu\sum_{\substack{i'=1\\i'\ne i}}^n\bar m_{i'}F(\x_i-\x_{i'},\y_{ij})
 \right),
 \end{equation}
{\rm see (\ref {F:xy})}. In particular, $\widetilde
H_1=\widetilde\Phi=0$ in the case $n_i=0$ of a planetary system
without satellites.
  \end {Lem}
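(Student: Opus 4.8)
The plan is to produce the impulses $\xxi_i,\eeta_{ij}$ explicitly as rescaled linear combinations of the original impulses and then to check the three identities in (\ref{eq:HM}) by a direct computation of the symplectic form, the kinetic energy, the potential energy and the angular momentum in the new variables. The change of configuration variables $(\M_i,\M_{ij})\mapsto(\x_i,\y_{ij})$ from (\ref{a}), (\ref{b}) is linear and invertible on the $2N$-dimensional space $Q$, so it lifts to a linear isomorphism of $T^*Q$; the entire content of the lemma is to track how the Newtonian Hamiltonian decomposes under this hierarchical, heliocentric-type splitting (satellites relative to their planet, satellite-system centres of mass $\C_i$ relative to the Sun).

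First I would fix the impulses by requiring that the Liouville $1$-form transform conformally,
$$\p\,d\q=\tfrac{\rho}{\omega}\bigl(\xxi\,d\x+\varepsilon\,\eeta\,d\y\bigr),$$
which simultaneously defines $\xxi_i,\eeta_{ij}$ as linear functions on $Q^*$ and yields $\oomega=\tfrac{\rho}{\omega}\widetilde\oomega$ with $\widetilde\oomega=\oomega_0+\varepsilon\oomega_1$ after applying $d$. The angular-momentum identity $I=\tfrac{\rho}{\omega}(I_0+\varepsilon I_1)$ then follows from bilinearity of the area form $[\cdot,\cdot]$ and the invariance of $I=\sum_i[\q_i,\p_i]$ under the linear change of positions, the two weights $\rho/\omega$ and $\varepsilon$ being inherited directly from the definition of the impulses. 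Next I would compute $H=\Kin+U$. Because the top-level coordinate $\x_i$ is built from the centre of mass $\C_i$, the two-body reduction decouples the cluster centre-of-mass motion from the internal motion at each level of the hierarchy, producing the reduced masses of (\ref{m}); for instance the reduced mass of the Sun and the $i$th centre of mass is $1\cdot\mu\bar m_i/(1+\mu\bar m_i)=\mu\widetilde m_i$, which is exactly the origin of the denominator $1+\mu\bar m_i$. Since $\x_i$ and $\y_{ij}$ are heliocentric/planetocentric rather than genuinely Jacobi, the kinetic energy is not diagonal; writing $\widetilde\Kin$ for the kinetic part of $\widetilde H=\tfrac{\omega}{\rho}H$ one gets
$$\widetilde\Kin=\omega\sum_{i}\frac{\xxi_i^2}{2\widetilde m_i}+\omega\mu\sum_{i<i'}\langle\xxi_i,\xxi_{i'}\rangle+\varepsilon\sum_{i,j}\frac{\eeta_{ij}^2}{2\widetilde m_{ij}}+\varepsilon\nu\sum_{i}\sum_{j<j'}\frac{\langle\eeta_{ij},\eeta_{ij'}\rangle}{m_i},$$
reproducing exactly the kinetic parts of $\widetilde K_i,K_{ii'},\widetilde S^{(i)}_j,S^{(i)}_{jj'}$ in (\ref{KS}), (\ref{tilde:KS}).

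For the potential I would expand each Newtonian summand $-\grav\m_a\m_b/r_{ab}$ in the small parameters by means of the expansion (\ref{rho:to0}) of $1/|\x+\rho\y|$: the Sun--planet pairs give $-\bar m_i/|\x_i|$, the planet--own-satellite pairs give $-m_im_{ij}/|\y_{ij}|$, the planet--planet pairs give $-\bar m_i\bar m_{i'}/|\x_i-\x_{i'}|$, and each remaining pair (Sun--satellite, planet--foreign-satellite, satellite--satellite) collects, after substituting $\M_{ij}=\M_i+\y_{ij}$ and $\M_i=\C_i-\nu\ddelta_i$, into the perturbation potentials $\Phi_i,\Phi_{ii'}$ of (\ref{tildeFi})--(\ref{tildeFii}); the vectors $\ddelta_i$ appear precisely because the internal coordinate is measured from the planet while the external one is measured from $\C_i$. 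Substituting $\grav=1/\mu$, $\rho=\omega^{2/3}\mu^{1/3}=1/R$ and $\varepsilon=\nu\rho^2/\omega$ and extracting the common factor $\rho/\omega$ then sorts everything into $\omega\widetilde H_0+\varepsilon\widetilde H_1+\omega^2\varepsilon\widetilde\Phi$. Analyticity of $\widetilde\Phi$ on the stated region is immediate, since every elementary summand of (\ref{tildeFi})--(\ref{tildeFii}) is of the form $1/|\x+\rho\y|$ (or its two-centre analogue with $|\x|$ replaced by $\tfrac12|\x_i-\x_{i'}|$) whose series (\ref{rho:to0}) converges there, and the identity (\ref{eq:Gii}) is checked term by term; setting $\nu=\rho=0$ retains only the $\rho^2$-coefficients, which by (\ref{rho:to0}) are the Hill potentials $F(\x_i,\y_{ij})$ and $F(\x_i-\x_{i'},\y_{ij})$, giving (\ref{f0}). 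I expect the main obstacle to be the potential bookkeeping: one must keep the substitutions consistent across all $\binom{N+1}{2}$ pairs and track the exact powers of $\mu,\nu,\rho$ produced by the mass factors $\m_a\m_b$ and the constant $\grav=1/\mu$, confirming that the leading Kepler terms separate cleanly from the $O(\omega^2\varepsilon)$ remainder $\widetilde\Phi$. Once the reduced masses and the Hill-potential normalisation are pinned down, the rest is routine algebra.
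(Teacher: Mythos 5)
Your proposal is correct and follows essentially the same route as the paper: the impulses are the (rescaled) cotangent lift of the linear change of configuration coordinates --- the paper realizes this by applying its ``Poincar\'e transformation'' lemma twice (once to the whole system, once to each satellite system) and then performing the scaling (\ref{4:12'}), which yields exactly your kinetic-energy formula with the reduced masses (\ref{m}) --- while the potential is regrouped into the Kepler terms plus $\widetilde\Phi$ via the expansion (\ref{rho:to0}), a step the paper likewise treats as bookkeeping. The only cosmetic difference is that you characterize the impulses by conformal invariance of the Liouville $1$-form rather than via the induced dual map $L^*$ on $Q^*$.
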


\begin{scRem} \label {rem:unpert}
Lemma~\ref {lem3:1} implies equivalences of the following
($\varepsilon$-)Hamiltonian systems for
$\omega,\varepsilon,\mu,\nu,\rho>0$:
$$
(T^*\widehat Q,\oomega,H)\cong(T^*\widehat Q,\widetilde\oomega,\widetilde H)
 \cong(T^*\widehat Q,T^*\widehat Q_0,p;\oomega_0,\oomega_1;\widetilde H,\widetilde H_1+\omega^2\widetilde\Phi)^\varepsilon
$$
where $\widehat Q_0$ is the configuration space of planets, and $p:T^*\widehat Q\to
T^*\widehat Q_0$ is the projection. The third of these systems, called {\it
the unperturbed system}, is not only Hamiltonian, but also
$\varepsilon$-Hamiltonian (see (\ref {eq:lam:Ham})). Hence it
naturally extends to any {\it nonnegative} values
$\omega,\varepsilon,\mu,\nu,\rho\ge0$ of small parameters (despite of
the fact that the symplectic structure degenerates if one of the
parameters vanishes). For the limiting values of the parameters
$\omega>0$ and $\mu=\nu=0$ (and, hence, $\varepsilon=\rho=0$), the
third system becomes a 0-Hamiltonian system
$(T^*\widehat Q,T^*\widehat Q_0,p;\oomega_0,\oomega_1;\omega H_0,H_1+\omega^2\Phi)^0$
called {\em unperturbed} where $H_0:=\widetilde H_0|_{\mu=0}$,
$H_1:=\widetilde H_1|_{\nu=0}$,
$\Phi:=\widetilde\Phi|_{\mu=\nu=\rho=0}$. The system
$(T^*\widehat Q,T^*\widehat Q_0,p;\oomega_0,\oomega_1;\omega H_0,H_1)^0$, called {\it
the model system}, is $\omega^2$-close to the unperturbed system. It
follows from lemma \ref {lem3:1} that the unperturbed system indeed
has the form (\ref {3:7}) in the configuration space $\widehat Q$.
\end{scRem}

Due to (\ref {rho:to0}) and (\ref {eq:Gii}), the functions $\Phi_i$,
$\Phi_{ii'}$ in (\ref {tildeFi}), (\ref {tildeFii}) have the
following form for $|\nu|\le\nu_0$, $|\rho|\le\rho_0$:
 $$
 \Phi_i =\nu\frac{m_i}{\bar m_i}F_{0,\nu\rho}(\x_i,\ddelta_i)+\sum_{j=1}^{n_i}\frac{m_{ij}}{\bar m_i}F_{0,\rho}(\x_i,\y_{ij}-\nu\ddelta_i)=
 $$
 \begin{equation} \label {d}
 =\sum^{n_i}_{j=1}
 \frac{m_{ij}}{\bar m_i}F_{0,\rho}(\x_i,\y_{ij})-\nu F(\x_i,\ddelta_i)+O(\nu\rho),
 \end{equation}
 $$
\Phi_{ii'}
 =\sum^{n_i}_{j=1}\frac{m_{ij}}{\bar m_i}F_{0,\rho}(\x_i-\x_{i'},\y_{ij})
 + \sum^{n_i'}_{j'=1}
 \frac{m_{i'j'}}{\bar m_{i'}}F_{0,\rho}(\x_{i'}-\x_i,\y_{i'j'}) -
 $$
 $$
 -\nu F(\x_i-\x_{i'},\ddelta_i)
 -\nu F(\x_{i'}-\x_i,\ddelta_{i'})
 + O(\nu\rho).
 $$
Hence the ``perturbation potential'' $\widetilde\Phi$ in (\ref {G})
satisfies the condition
 \begin{equation} \label {f}
\widetilde\Phi=\sum^n_{i=1} \sum^{n_i}_{j=1} m_{ij}
 \left(
       F_{0,\rho}(\x_i,\y_{ij})+
       \mu\sum_{\substack{i'=1\\i'\ne i}}^n\bar m_{i'}F_{0,\rho}(\x_i-\x_{i'},\y_{ij})
 \right)-
 \end{equation}
 $$
 -\nu\sum_{i=1}^n
 \left(\bar m_i F(\x_i,\ddelta_i) + \mu\sum_{\substack{i'=1\\i'\ne i}}^n\bar m_i F(\x_i-\x_{i'},\ddelta_i)\right)
 +O(\nu\rho), \quad \nu,\rho\to0,
 $$
which implies~(\ref {f0}), see~(\ref {tildeF0}).

Let us explain a {\it geometric meaning} of the Hill potential $F(\x,\y)$
when the $i$th planet has $n_i>1$ satellites ($1\le i\le n$).
Consider the function $\Phi_i=\Phi_i(\x_i,\y_{i*},\bar
m_{i*},\nu,\rho)$ defined by the formula (\ref {tildeFi}) and called
the ``potential of interaction of all satellites of the $i$th planet
with the Sun''. Due to (\ref {d}), the function $\Phi_i|_{\rho=0}$
equals a linear combination of the functions $F(\x_i,\y_{ij})$, $1\le
j\le n_i$, and $F(\x_i,\ddelta_i)$.

\subsection {The Poincar\'e transformation in the $n+1$ body problem}

In order to prove lemma \ref {lem3:1}, we will explicitly construct
the variables of momenta $\xxi_i$, $\eeta_{ij}$ and will show that the
function $H$ in (\ref {eq:HM}), (\ref {vozmHM}), (\ref {H0}) equals
the total energy of the system. One easily shows that those summands
in $H$ that do not depend on the momenta give the potential energy
$U$.

Let us compute the kinetic energy $G$. We will explore the fact that
the transition from the coordinates $(\M_0,\M_1,\ldots,\M_N)$ in the
configuration space to the coordinates $\x_i,\y_{ij}$ (see \Sec\ref
{par:param}) can be done by applying twice the following
transformation called the Poincar\'e transformation.

Let us consider the configuration manifold $Q$ of a planetary system
(i.e.\ the system of $n+1$ particles in a Euclidean plane). It
consists of all ordered collections of radius vectors
$\M_0,\M_1,\ldots,\M_n$ with associated masses $c_0=1$, $c_1=\lambda
m_1,\ldots,c_n=\lambda m_n$ where $0<\lambda\ll 1$. Thus the manifold
$Q$ is naturally identified with the vector space $\bbR^{2(n+1)}$
with coordinates $\M_0,\M_1,\ldots,\M_n$. Consider the linear
transformation $L=L_{c_0,c_1,\dots,c_n}$ in $\bbR^{2(n+1)}$ that
corresponds to introducing the new linear coordinates
on the space $Q$ corresponding to
the following collection of radius vectors:
 $$
\widetilde
\M_0=\frac{\M_0+c_1\M_1+\ldots+c_n\M_n}{1+c_1+\ldots+c_n},\quad
\widetilde \M_1=\M_1-\M_0,\ \ldots ,\ \widetilde \M_n=\M_n-\M_0
 $$
where $\widetilde \M_0=\C :=
\frac{\M_0+c_1\M_1+\ldots+c_n\M_n}{1+c_1+\ldots+c_n}$ is the radius
vector of the centre of masses of the system.

\begin{scDef} \label {definit2}
The transformation $L=L_{c_0,c_1,\dots,c_n}$ is called {\it the
Poincar\'e trans\-form\-ation} on the configuration manifold of the
planetary system.
 \end{scDef}

Actually one could consider another transformation, namely {\it the
Jacobi transformation} $\widetilde \M_0=\C$, $\widetilde
\M_1=\M_1-\C,\ldots ,\widetilde \M_n=\M_n-\C$. But this
transformation would lead to more awkward formulae. Moreover it would
bring us to a desired result only in the case of a usual planetary
system, i.e.\ having no satellites.

Consider the dual space $Q^*$, i.e.\ the space of all linear
functions on the space $Q$ (or, equivalently, the cotangent space to
$Q$ at its any point). This space consists of all collections
$\p_0,\p_1,\ldots,\p_n$ whose each item $\p_i$ is a linear function
on the plane, i.e.\ a co-vector. In fact, we can define the value of
the linear function corresponding to such a collection on the
configuration $(\M_0,\M_1,\ldots,\M_n)\in Q$ to be
$\sum^n_{i=0}\langle \p_i,\M_i\rangle$. It is clear that the
nondegenerate transformation $L$ on $\bbR^{2(n+1)}\cong Q$ induces a
linear transformation $L^*$ on $(\bbR^{2(n+1)})^*\cong Q^*$. Denote
the image of the collection $\p_0,\p_1,\ldots,\p_n$ under the
transformation $L^*$ by $\widetilde \p_0,\widetilde
\p_1,\ldots,\widetilde \p_n$.

Consider the real valued function
$\Kin=\sum^n_{i=0}\frac{\p^2_i}{2c_i}$ of kinetic energy on the space
$Q^*$. Besides we consider the function $I=\sum_{i=0}^n[\M_i,\p_i]$
of angular momentum on the space $T^*Q$. Finally consider the
function of the total momentum $\bP=\p_0+\p_1+\ldots +\p_n$ on $Q^*$
whose values belong to the space of co-vectors, i.e.\ of linear
functions on the plane.

\begin {Lem}
Under the Poincar\'e transformation $L=L_{c_0=1,c_1,\dots,c_n}$ on
the configuration space $Q$ of the $n+1$ body problem, the functions
$\Kin$ and $\bP$ on $Q^*$ transform as follows:

{\rm(A)} The kinetic energy $\Kin=\sum^n_{i=0}\frac{\p^2_i}{2c_i}$
has the form
\begin{equation} \label {4:9}
 \Kin=\frac{\widetilde\p^2_0}{2\bar c_0}+\sum^n_{i=1}\frac{\widetilde\p^2_i}{2c_i}+
 \frac{1}{2}(\widetilde\p_1+\ldots+\widetilde\p_n)^2
\end{equation}
where $\bar c_0=1+c_1+\ldots +c_n=1+\lambda(m_1+\ldots +m_n)$. The
expression {\rm(\ref {4:9})} can be rewritten as follows:
\begin{equation} \label {4:10}
 \Kin=\frac{\widetilde\p^2_0}{2\bar c_0}+\sum^n_{i=0}\frac{\widetilde\p^2_i}{2\widetilde c_i}+
 \sum_{1\le i<i'\le n}\langle\widetilde \p_i,\widetilde \p_{i'}\rangle
\end{equation}
where $\widetilde c_i:=c_0c_i/(c_0+c_i)=c_i/(1+c_i)$, $1\le i\le n$.

{\rm(B)} The total momentum $\bP=\p_0+\p_1+\ldots+\p_n$ transforms to
the momentum of the ``heaviest'' particle:
\begin{equation} \label {4:11}
 \bP=\widetilde\p_0.
\end{equation}

The function of angular momentum $I=\sum_{i=0}^n[\M_i,\p_i]$ on the
phase space $X=T^*Q$ is $L$-invariant: $I=\sum_{i=0}^n[\widetilde
\M_i,\widetilde\p_i]$.
 \end {Lem}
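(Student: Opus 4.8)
The plan is to treat $L=L_{1,c_1,\dots,c_n}$ as a block-scalar linear map on $Q\cong(\bbR^2)^{n+1}$, each block being a scalar multiple of the $2\times2$ identity, and to obtain the induced action on impulses as the contragredient $L^*=(L^{-1})^{\mathsf T}$ forced by the invariance of the canonical pairing $\sum_i\langle\p_i,\M_i\rangle$. Once this impulse transformation law is pinned down, parts (A), (B) and the angular-momentum identity all follow by direct substitution; the only conceptually delicate step is fixing that law correctly and, for the angular momentum, noticing an equivariance.

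First I would invert the configuration relations. From $\widetilde\M_i=\M_i-\M_0$ ($1\le i\le n$) and $\bar c_0\widetilde\M_0=\M_0+\sum_{i\ge1}c_i\M_i$ I get $\M_0=\widetilde\M_0-\frac1{\bar c_0}\sum_{i\ge1}c_i\widetilde\M_i$ and $\M_i=\M_0+\widetilde\M_i$. I would then substitute these into $\sum_{i=0}^n\langle\p_i,\M_i\rangle$ and require that it equal $\sum_{i=0}^n\langle\widetilde\p_i,\widetilde\M_i\rangle$; this requirement is exactly $L^*=(L^{-1})^{\mathsf T}$, i.e.\ that the change of variables be canonical. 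Reading off the coefficient of $\widetilde\M_0$ gives $\widetilde\p_0=\p_0+\sum_{i\ge1}\p_i=\bP$, which is part (B), formula (\ref{4:11}). Reading off the coefficient of $\widetilde\M_i$ ($i\ge1$) gives $\widetilde\p_i=\p_i-\frac{c_i}{\bar c_0}\bP$. Inverting, I obtain $\p_i=\widetilde\p_i+\frac{c_i}{\bar c_0}\widetilde\p_0$ for $i\ge1$ and $\p_0=\frac1{\bar c_0}\widetilde\p_0-\sum_{i\ge1}\widetilde\p_i$, where I use $c_0=1$ and $\bar c_0-\sum_{i\ge1}c_i=1$.

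Next I would substitute into $\Kin=\frac12\p_0^2+\sum_{i\ge1}\frac{\p_i^2}{2c_i}$. Setting $\mathbf S:=\widetilde\p_1+\dots+\widetilde\p_n$, the two groups of terms produce cross terms $\mp\frac1{\bar c_0}\langle\widetilde\p_0,\mathbf S\rangle$ that cancel, while the coefficient of $\widetilde\p_0^2$ telescopes as $\frac1{2\bar c_0^2}\bigl(1+\sum_{i\ge1}c_i\bigr)=\frac1{2\bar c_0}$; what remains is precisely (\ref{4:9}). To reach (\ref{4:10}) I would expand $\mathbf S^2=\sum_{i\ge1}\widetilde\p_i^2+2\sum_{i<i'}\langle\widetilde\p_i,\widetilde\p_{i'}\rangle$ and combine $\frac1{2c_i}+\frac12=\frac{1+c_i}{2c_i}=\frac1{2\widetilde c_i}$, which is where the reduced masses $\widetilde c_i=c_i/(1+c_i)$ appear. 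This settles (A).

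For the invariance of the angular momentum I would write $[\a,\b]=\langle\a,J_0\b\rangle$ with $J_0$ the rotation by $\pi/2$ acting identically on every $\bbR^2$ factor, and use that a block-scalar $L$ commutes with the diagonal rotation $J_0$; this is the $SO(2)$-equivariance of the Poincar\'e transformation. Then, with $\widetilde\M=L\M$ and $\widetilde\p=(L^{-1})^{\mathsf T}\p$,
 $$
 \sum_{i=0}^n[\widetilde\M_i,\widetilde\p_i]=\langle L\M,\,J_0(L^{-1})^{\mathsf T}\p\rangle=\langle\M,\,L^{\mathsf T}(L^{-1})^{\mathsf T}J_0\p\rangle=\langle\M,J_0\p\rangle=I,
 $$
since $L^{\mathsf T}(L^{-1})^{\mathsf T}=\mathrm{id}$. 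I expect the main obstacle to be conceptual rather than computational: identifying the impulse law as the contragredient $(L^{-1})^{\mathsf T}$ and recognizing the commutation of the block-scalar $L$ with the simultaneous rotation. With those two observations secured, the cancellations and the telescoping that yield (\ref{4:9}) and (\ref{4:10}), as well as the identity above, are routine.
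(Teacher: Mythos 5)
Your proposal is correct and follows essentially the same route as the paper: you derive the contragredient impulse law (arriving at exactly the formulae $\p_0=\frac{1}{\bar c_0}\widetilde\p_0-\widetilde\p_1-\dots-\widetilde\p_n$, $\p_i=\widetilde\p_i+\frac{c_i}{\bar c_0}\widetilde\p_0$ that the paper writes down and substitutes), and your $J_0$-commutation argument for the angular momentum is just a coordinate-free phrasing of the paper's computation $\sum_{i,j,k}[\M_j,\p_k]a_{ij}b_{ik}=\sum_j[\M_j,\p_j]$, both resting on the fact that $L$ acts by scalars on each $\bbR^2$ factor. The cancellation of cross terms, the telescoping of the $\widetilde\p_0^2$ coefficient, and the identity $\frac1{2c_i}+\frac12=\frac1{2\widetilde c_i}$ all check out.
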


\begin{proof} Items A and B directly follow by substituting into the
functions $\Kin$ and $\bP$ the following explicit formulae for the
transformation $L^*$ of momenta:
 $\p_0=\frac{1}{1+c_1+\ldots+c_n}\widetilde\p_0-\widetilde\p_1-\dots-\widetilde\p_n$,
 $\p_i=\widetilde \p_i+\frac{c_i}{1+c_1+\ldots +c_n}\widetilde \p_0$,
 $1\le i\le n$.

The invariance of the angular momentum follows from its invariance
under the transformation on the space $X=T^*Q$ induced by
any linear transformation $\widetilde\M_i=\sum_ja_{ij}\M_j$ on the
space $Q$ with a nondegenerate matrix $\|a_{ij}\|$. The latter holds,
since the transformation of momenta has the form
$\widetilde\p_i=\sum_kb_{ik}\p_k$ where
$\sum_ib_{ik}a_{ij}=\delta_{jk}$, hence
 $$
\sum_{i=0}^n[\widetilde\M_i,\widetilde\p_i]=
 \sum_{i=0}^n\sum_{j=0}^n\sum_{k=0}^n[\M_j,\p_k]a_{ij}b_{ik}=
 \sum_{j=0}^n[\M_j,\p_j]=I.
 \eqno\square
 $$
\end{proof}

\subsection {Proof of the main Lemma \ref {lem3:1}} \label {subsec:lem}

Let us consider the case of a planetary system without satellites.
Observe that the transition from the coordinates $\M$ to the
coordinates $\x$ is a composition of the Poincar\'e transformation
$L$ and the homothety $\widetilde\M_i=R\x_i$, $1\le i\le N$. By
setting $\widetilde\p_i=\sqrt{\frac{\mu}{R}}\xxi_i$, $1\le i\le N$,
$\widetilde\p_0=0$, one obtains from (\ref {4:10}) the desired
expression for the kinetic energy $\Kin$. In fact, $\Kin R$ equals
$\sum^N_{i=1}\frac{\xxi^2_i}{2\widetilde m_i}+
 \mu\sum_{1\le i<j\le N}\langle\xxi_i,\xxi_j\rangle$.
Hence, in the partial case of a planetary system without satellites,
the function $H$ in (\ref {eq:HM}), (\ref {vozmHM}), (\ref {H0})
indeed equals the total energy $\Kin+U$ of the system. The symplectic
structure $d\widetilde\p\wedge d\widetilde\M=\sqrt{\mu R}d\xxi\wedge
d\x$ also has the desired form, since $\sqrt{\mu R}=\frac{1}{\omega
R}$.

In the general case of a planetary system with satellites, we observe
that a transition from the radius vectors $(\M_0,\M_1,\ldots,\M_N)$
to the coordinates $\x_i$, $\y_{ij}$ can be obtained via performing
the following three transformations. At first, one should
perform the Poincar\'e transformation $L$ to the whole system (see
above). At second, one performs the transformation $L$ to each
satellite system $\widetilde\M_{ij}$, $0\le j\le n_i$. Finally one
performs the ``scaling'' homothety
$\widetilde{\widetilde\M}_{i0}=\C_i=R\x_i$, $1\le i\le n$,
$\widetilde{\widetilde\M}_{ij}=\y_{ij}$, $1\le j\le n_i$. For the
sake of simplicity, we will assume that all planets have satellites,
i.e.\ all $n_i$ are positive.

{\it Step 1.} The first performing of the Poincar\'e transformation
$L$ to the initial configuration variables gives, due to (\ref
{4:9}),
 $$
\Kin=\frac{\widetilde\p^2_0}{2\bar c_0}+\sum^N_{i=1}\frac{\widetilde\p^2_i}{2c_i}
 +\frac{1}{2}(\widetilde\p_1+\ldots +\widetilde\p_N)^2=
 \frac{\widetilde\p^2_0}{2\bar c_0}+\sum^n_{i=1}\widetilde \Kin_i+
 \frac{1}{2}(\widetilde \bP_1+\ldots +\widetilde \bP_n)^2.
 $$
Here $\widetilde\Kin_i=\frac{\widetilde\p^2_i}{2c_i}+
 \sum^{n_i}_{j=1}\frac{\widetilde\p^2_{ij}}{2c_{ij}}=
 \frac{\widetilde\p^2_i}{2\mu m_i}+
 \sum^{n_i}_{j=1}\frac{\widetilde\p^2_{ij}}{2\mu\nu m_{ij}}$ is the
kinetic energy of the $i$th satellite system, and
$\widetilde\bP_i=\widetilde\p_i+\sum^{n_i}_{j=1}\widetilde \p_{ij}$
is its total momentum.

{\it Step 2.} Put $\widetilde\p_0=0$ and perform separately the
Poincar\'e transformation $L$ to each satellite system. As a result,
we have from the formula (\ref {4:10})
 $$
\widetilde\Kin_i=\frac{\widetilde{\widetilde\p}^2_i}{2\mu\bar m_i}+
\sum^{n_i}_{j=1}\frac{\widetilde{\widetilde\p}^2_{ij}}{2\mu\nu\widetilde m_{ij}}+\frac{1}{\mu m_i}
 \sum_{1\le j<j'\le n_i}\langle\widetilde{\widetilde\p}_{ij},
 \widetilde{\widetilde\p}_{ij'}\rangle
 $$
where $\bar m_i$, $\widetilde m_{ij}$ are as in (\ref {m}). By the
formula (\ref {4:11}), we have
$\widetilde\bP_i=\widetilde{\widetilde\p}_i$.

By using the previous step, we obtain
 $$
\Kin=\sum^n_{i=1}\left(\frac{\widetilde{\widetilde\p}^2_i}{2\mu\bar m_i}+
 \sum^{n_i}_{j=1}\frac{\widetilde{\widetilde\p}^2_{ij}}{2\mu\nu\widetilde m_{ij}}+
 \sum_{1\le j<j'\le n_i}\frac{\langle\widetilde{\widetilde\p}_{ij},
 \widetilde{\widetilde\p}_{ij'}\rangle}{\mu m_i}\right)+
 \frac{1}{2}\left(\sum^n_{i=1}\widetilde{\widetilde\p}_i\right)^2 =
 $$
 $$
=\sum^n_{i=1}\left(\frac{\widetilde{\widetilde\p}^2_i}{2\mu\widetilde m_i}+
 \sum^{n_i}_{j=1}\frac{\widetilde{\widetilde\p}^2_{ij}}{2\mu\nu\widetilde m_{ij}}+
 \sum_{1\le j<j'\le n_i}
 \frac{\langle\widetilde{\widetilde\p}_{ij},
 \widetilde{\widetilde\p}_{ij'}\rangle}{\mu m_i}\right)+
 \sum_{1\le i<i'\le n}\langle\widetilde{\widetilde\p}_i,\widetilde{\widetilde\p}_{i'}\rangle
 $$
where $\widetilde m_i=\frac{\bar m_i}{1+\mu\bar m_i}$, $1\le i\le n$.

{\it Step 3.} Now perform the following ``scaling'' of coordinates
and momenta:
 \begin{equation} \label {4:12'}
\widetilde{\widetilde\M}_i=R\x_i, \quad
\widetilde{\widetilde\M}_{ij}=\y_{ij}, \qquad
\widetilde{\widetilde\p}_i=\sqrt{\frac{\mu}{R}}\xxi_i, \quad
\widetilde{\widetilde\p}_{ij}=\mu\nu\eeta_{ij}
 \end{equation}
for $1\le i\le n$, $1\le j\le n_i$. This gives:
 $$
 \Kin=
 \sum^n_{i=1}\left(\frac{\xxi_i^2}{2\widetilde m_iR}+
 \mu\nu\sum^{n_i}_{j=1}\frac{\eeta_{ij}^2}{2\widetilde m_{ij}}+
 \frac{\mu\nu^2}{m_i}
 \sum_{1\le j<j'\le n_i}
 \langle\eeta_{ij},\eeta_{ij'}\rangle\right)+\frac{\mu}{R}\sum_{1\le i<i'\le n}
 \langle\xxi_i,\xxi_{i'}\rangle.
 $$
By taking into account that $\mu\nu\omega R=\varepsilon$, we have the
desired formula:
 $$
 \omega R\Kin=
\sum^n_{i=1}\left(\omega\frac{\xxi_i^2}{2\widetilde m_i}+
 \varepsilon\sum^{n_i}_{j=1}\frac{\eeta_{ij}^2}{2\widetilde m_{ij}}+
 \frac{\varepsilon\nu}{m_i}\sum_{1\le j<j'\le n_i}
 \langle\eeta_{ij},\eeta_{ij'}\rangle\right)+
 \omega\mu\sum_{1\le i<i'\le n}
 \langle\xxi_i,\xxi_{i'}\rangle.
 $$

Due to (\ref {4:12'}), the symplectic structure has the form
 $$
\oomega=d\p\wedge d\M=d\widetilde{\widetilde\p}\wedge d\widetilde{\widetilde\M}=
 \sqrt{\mu R}\sum^n_{i=1}\left(d\xxi_i\wedge d\x_i+
 \varepsilon\sum^{n_i}_{j=1}d\eeta_{ij}\wedge d\y_{ij}\right),
 $$
since, recall, $\varepsilon=\nu\sqrt{\frac{\mu}{R}}$. This symplectic
structure has the desired form (\ref {eq:HM}), (\ref {vozmHM}), (\ref
{H0}), since $\sqrt{\mu R}=\frac{1}{\omega R}$. In a similar way, one
proves the formulae for the first integral of angular momentum.

This finishes the proof of the main lemma \ref {lem3:1}. \qed

\section {Deriving theorems \ref {th1}--\ref {th:degen:sym} from theorems \ref {th1'}--\ref {th:nondeg}}
\label {par:proofs}

Due to lemma \ref {lem3:1} and remark \ref {rem:unpert}, the $N+1$
body problem of the type of planetary system with satellites is
equivalent to the $\varepsilon$-Hamiltonian system (\ref {eq:pert})
with small parameters $0<\omega,\varepsilon,\mu,\nu,\rho\ll1$ related
by the conditions $\varepsilon=\omega^{1/3}\mu^{2/3}\nu$ and
$\rho=\omega^{2/3}\mu^{1/3}$. Moreover the functions $\widetilde
H_0,\widetilde H_1,\widetilde\Phi$ are $S^1$-invariant, the function
$\widetilde H_0=H_0+\mu R_0$ ``projects'' to $M_0:=T^*Q_0$, the
function $\widetilde H_1=H_1+\nu R_1$ ``projects'' to $M_1:=T^*Q_1$,
and their ``principal parts'' equal the sums $H_0=\sum_{i=1}^nH_{i0}$
and $H_1=\sum_{i=1}^n\sum_{j=1}^{n_i}H_{ij}$. Furthermore, due to
lemma \ref {lem3:2:}, each summand has the form
$H_{ij}=H_{ij}(I_{ij},q_{ij},p_{ij})$ and satisfies the conditions
(\ref {eq:isoen}). The perturbation potential $\widetilde\Phi$ is an
analytic function in a neighbourhood of any torus $\Lambda\o$,
provided that the collection of angular frequencies $\Omega_{ij}$
satisfies the conditions (\ref {poryadki:chastot'}) and (\ref
{poryadki:chastot''}) of ``lack of collisions''. The principal part
$\Phi:=\widetilde\Phi|_{\mu=\nu=\rho=0}$ of the perturbation
potential has the form (\ref {eq:Phi}).

So, the $N+1$ body problem of the type of planetary system with
satellites considered in theorem \ref {th1} is equivalent to an
$\varepsilon$-Hamiltonian system belonging to the class of
``perturbed'' systems in theorem \ref {th1'}.

\begin{proof}[of theorems \ref {th1} and \ref {th:ust}] {\it Step 1.}
In theorems \ref {th1} and \ref {th1'}, the ``relative resonance''
conditions (\ref {maxrez}) and (\ref {maxrez'}) on the collection of
frequencies are equivalent. The nondegeneracy condition (\ref
{alpha}) from theorem \ref {th1} is equivalent to the nondegeneracy
condition (\ref {eq:nondeg}) from theorem \ref {th1'}.

Let us suppose that the more delicate nondegeneracy condition (\ref
{eq:fine}) from theorem \ref {th1} holds. Let us prove the
nondegeneracy conditions (\ref {eq:nondeg'}) from theorem \ref
{th1'}. The first condition in (\ref {eq:nondeg'}) is equivalent to
the first condition in (\ref {eq:fine}). In order to prove the second
condition in (\ref {eq:nondeg'}), let us evaluate the number
$\Delta_{ij}$ in (\ref {eq:Delta:ij}). Due to (\ref {f0}), we have
$F_{ij}=F_{ij}(\x_i,\y_{ij})=m_{ij}F(\x_i,\y_{ij})$. By construction,
$F_{ij}\o=(F_{ij}(\x_i,\cdot))|_{H_{ij}^{-1}(H_{ij}(I_{ij}\o,0,0))}$
where $\x_i=\const$, $|\x_i|=R_i/R=|\Omega_{i0}|^{-2/3}$ (see theorem
\ref {th1'}). Let $\langle F_{ij}\o\rangle=\langle
F_{ij}\o\rangle(q_{ij},p_{ij})$ be the function obtained by averaging
the Hill potential $F_{ij}\o=F_{ij}\o(q_{ij},p_{ij})$ along the
$\frac{2\pi}{\Omega_{ij}}$-periodic solutions of the Kepler problem
$(M_{ij},\oomega_{ij},H_{ij})$ for the satellite. By an easy
calculation, taking into account (\ref {F:xy}) and corollary \ref
{corlem3:2:}, we find the differential and the Hesse matrix of the
function $\langle F_{ij}\o\rangle$ at the point $(0,0)$:
 $$
   d\langle{F_{ij}\o}\rangle(0,0)=0, \qquad
   \frac{\partial^2\langle F_{ij}\o\rangle(0,0)}{\partial(q_{ij},p_{ij})^2}
   = \Omega_{i0}^2 \frac{I_{ij}}{\Omega_{ij}}
 \left( \begin{array}{cc} -29/8 & 0 \\
                              0 & 25/(8I_{ij}^2) \end{array}\right).
 $$
This and (\ref {d2H}) imply that
 $$
 \Delta_{ij}
 = \frac{\Omega_{ij}}2 \Tr\left(\left
 (\frac{\partial^2H_{ij}(I_{ij}\o,0,0)}{\partial(q_{ij},p_{ij})^2}\right)^{-1}
 \frac{\partial^2\langle F_{ij}\o\rangle(0,0)}{\partial(q_{ij},p_{ij})^2}\right)
 =
 $$
 $$
 = \frac{\Omega_{i0}^2}{2\Omega_{ij}}
 \Tr\left(
 \left( \begin{array}{cc} 1 & 0 \\
                          0 & I_{ij}^2 \end{array}\right)
 \left( \begin{array}{cc} -29/8 & 0 \\
                              0 & 25/(8I_{ij}^2) \end{array}\right)
 \right)
 = - \frac{\Omega_{i0}^2}{4\Omega_{ij}}.
 $$
Therefore the second desired condition in (\ref {eq:nondeg'}) has the
form
 $$
 \alpha+\Delta_{ij}\omega^2T = \alpha-\frac{\fomega_i^2}{4\Omega_{ij}}T \not\in\left[-C_2\omega^3T,C_2\omega^3T\right]+2\pi\bbZ,
 $$
i.e.\ it is equivalent to the second condition in (\ref {eq:fine})
with the constant $C:=C_2$.

Thus, all the conditions of theorem \ref {th1'} are fulfilled. Hence
this theorem implies theorem \ref {th:ust}(A).

{\it Step 2.} Let us check that, for the $N+1$ body problem under
consideration, the model problem, the unperturbed and the perturbed
problems are reversible. The construction of the functions
$\widetilde H$, $\widetilde H_0$, $\widetilde H_1$ shows that they
(and, hence, also $\widetilde\Phi$) are invariant under each of the
involutions $S_l$ and $S$ from \Sec\ref {par:sym}. Hence they are
invariant under the composition $\J=S_lS=SS_l$. Due to (\ref {0}), the
involution $\J=S_lS=SS_l$ acts component-wise in the form
$(\varphi_{ij},I_{ij},q_{ij},p_{ij})\mapsto(-\varphi_{ij},I_{ij},q_{ij},-p_{ij})$.
From here, by taking into account the $\J$-invariance of the functions
$\widetilde H_0$, $\widetilde H_1$, $\widetilde\Phi$, we obtain the
reversibility of the model system, of the unperturbed and the perturbed
systems. Hence theorem \ref {th:sym} implies theorem \ref {th1}.

{\it Step 3.} Let us derive theorem \ref {th:ust}(B) from theorem
\ref {th:stab}. By lemma \ref {lem3:2:} or (\ref {d2H}), all numbers
$\frac{\partial^2H_{ij}}{\partial I_{ij}^2}(I_{ij}\o,0,0)$ are
negative. The sign $\eta_{ij}$ from (\ref {eq:eta:ij}) equals
 $$
 \eta_{ij}
 ={\rm sgn\,}\left(\Omega_{ij}\Tr\frac{\partial^2H_{ij}(I_{ij}\o,0,0)}{\partial(q_{ij},p_{ij})^2}\right)
 = {\rm sgn\,}\Omega_{ij}.
 $$
Hence, by the first property of having fixed sign in theorem \ref
{th:ust}, all the signs $\eta_{i0}={\rm sgn\,}\Omega_{i0}$ are the
same, moreover $\eta_{ij}\Delta_{ij}<0$ for $1\le j\le n_i$. Suppose
that the conditions (\ref {eq:fine'a}) and (\ref {eq:fine'b}) hold
for $C:=C_2$. Then $\alpha\not\in\pi\bbZ$ and
 \begin{equation} \label {eq:fine'a'} 
\frac{\eta_{10}+\eta_{ij}}2\alpha+\frac{\eta_{ij}\Delta_{ij}}2\omega^2T\not\in\left[-\frac{C_2}2\omega^3T,\frac{C_2}2\omega^3T\right]+\pi\bbZ, 
 \end{equation}
 \begin{equation} \label {eq:fine'b'} 
\frac{\eta_{ij}+\eta_{i'j'}}2\alpha+\frac{\eta_{ij}\Delta_{ij}+\eta_{i'j'}\Delta_{i'j'}}2\omega^2T\not\in\left[-C_2\omega^3T,C_2\omega^3T\right]+\pi\bbZ
 \end{equation}
for $1\le j\le n_i$ and $1\le j'\le n_{i'}$. Consider any collection
of real numbers $\alpha_{ij}$, $1\le i\le n$, $0\le j\le n_i$, such
that
 $$
 \alpha_{i0}=\eta_{i0}\alpha, \
 |\alpha_{ij}-\eta_{ij}(\alpha+\Delta_{ij}\omega^2T)|\le C_2\omega^3T,
 \quad 1\le i\le n, \ 1\le j\le n_i.
 $$
Then:

1) the sum $\alpha_{i0}+\alpha_{i'0}=2\eta_{10}\alpha$ does not
belong to $2\pi\bbZ$, since $\alpha\not\in\pi\bbZ$;

2) for $1\le j\le n_i$, the sum
$\alpha_{i'0}+\alpha_{ij}\in(\eta_{10}+\eta_{ij})\alpha+\eta_{ij}\Delta_{ij}\omega^2T+[-C_2\omega^3T,C_2\omega^3T]$
does not belong to $2\pi\bbZ$ because of (\ref {eq:fine'a'}); 

3) for $1\le j\le n_i$ and $1\le j'\le n_{i'}$, the sum
$\alpha_{ij}+\alpha_{i'j'}\in(\eta_{ij}+\eta_{i'j'})\alpha+(\eta_{ij}\Delta_{ij}+\eta_{i'j'}\Delta_{i'j'})\omega^2T+[-2C_2\omega^3T,2C_2\omega^3T]$
does not belong to $2\pi\bbZ$ because of (\ref {eq:fine'b'}). 

Thus, the sum of any two, possibly coinciding, numbers of the set
$\alpha_{ij}$ does not belong to the set $2\pi\bbZ$. Hence, the
hypothesis of theorem \ref {th:stab} holds, and therefore this
theorem implies theorem \ref {th:ust}(B).
 \qed
\end{proof}

\subsection {Existence of ``gaps'' in the families of relatively-periodic solutions}
\label {par3:1:3'}

In this section, we give a more exact definition of the notion
``almost any'' in theorem~\ref {th:degen:sym} (meaning ``any'' for
$N\ge n=2$, see corollary \ref {cor:3bodies}($\nexists$)) and of the
subsets $\cM\subset\cM^{\rm sym}\subset\bbR^n_{>0}$. Besides we
derive theorem \ref {th:degen:sym} and corollary \ref
{cor:3bodies}($\nexists$) from theorem \ref {th:nondeg}.

Consider the sequence of {\it positive} real numbers
 \begin{equation} \label {cs}
c_\kappa:=\sqrt{r_\kappa} \left(\left(\frac12-\frac1{4\kappa}\right) A_\kappa + \left(\frac54r_\kappa+\frac1{4r_\kappa}-\frac32\right) \frac{B_\kappa}\kappa \right), \quad\mbox{где}\quad
 r_\kappa:=\left(\frac{\kappa+1}{\kappa}\right)^{2/3},
 \end{equation}
$$
A_\kappa := \frac{1}{2\pi}\int\limits_{-\pi}^{\pi} \frac{\cos(\kappa t)d t} {\sqrt{r_\kappa+\frac{1}{r_\kappa}-2\cos t}}, \quad 
B_\kappa := \frac{1}{2\pi}\int\limits_{-\pi}^{\pi} \frac{\cos(\kappa t)d t} {(r_\kappa+\frac{1}{r_\kappa}-2\cos t)^{3/2}}, 
$$
$\kappa\in\bbZ\setminus\{-1,0\}$. It will interest us only up to a
nonzero multiplicative factor. 
The following properties of the sequence $c_\kappa$ and its entries 
can be used for approximate computations:
 $$
c_\kappa = \frac{1}{4\pi} A + \frac{1}{3\pi} B + o(1), \quad
A_\kappa = \frac{1}{2\pi} A + o(1), \quad 
B_\kappa = \frac{\kappa^2}{2\pi} B + o(\kappa^2), 
\qquad
|\kappa|\to\infty,
 $$
and $\frac54r_\kappa+\frac1{4r_\kappa}-\frac32 = \frac{2}{3\kappa}-\frac{5}{81\kappa^3}+O(\frac{1}{\kappa^4})$, where
 $$
A := 
\int\limits_{-\infty}^{\infty}\frac{\cos u\,du}{\sqrt{\frac{4}{9}+u^2}}, \qquad
B := 
\int\limits_{-\infty}^{\infty}\frac{\cos u\,du}{(\frac{4}{9}+u^2)^{3/2}}>0.
 $$

Put
\begin{equation} \label {sii'}
\kappa_{ii'}:=\frac{\fomega_i}{\fomega_{i'}-\fomega_i}, \qquad i\ne i', \quad
1\le i,i'\le n.
 \end{equation}
Then
$\frac{\fomega_{i'}}{\fomega_i}=\frac{\kappa_{ii'}+1}{\kappa_{ii'}}$,
$\kappa_{i'i}=-\kappa_{ii'}-1$, thus the numbers $\kappa_{i'i}$ and
$\kappa_{ii'}$ are either both integer or both non-integer, moreover
$\kappa:=\kappa_{ii'}\in\bbR\setminus\{0,-1\}$. The number
$\frac{\kappa+1}{\kappa}$ equals the ratio
$\frac{\fomega_{i'}}{\fomega_i}$ of the angular frequencies of two
planets along the circular orbits. Hence, due to Kepler's second 
law, the number $r_\kappa=(\frac{\kappa+1}{\kappa})^{2/3}$ in (\ref
{cs}) equals the ratio of the radii of these orbits. Let us define
the number $c_\kappa\in\bbR$ for any $\kappa\in\bbR$ as follows:
either by the formula (\ref {cs}) if
$\kappa\in\bbZ\setminus\{-1,0\}$, or by the formula
 \begin{equation} \label {eq:cs0}
c_\kappa:=0 \qquad \mbox{if}\qquad
\kappa\in\bbR\setminus(\bbZ\setminus\{-1,0\}).
 \end{equation}
Consider the following collection of complex-valued functions on the
$n$-dimensional torus $(S^1)^n$ with angular coordinates
$\vvarphi=(\varphi_1,\dots,\varphi_n)$:
 \begin{equation} \label {fl}
f_{ll'}(\vvarphi):=\kappa_{ll'}c_{\kappa_{ll'}}e^{i\kappa_{ll'}(\varphi_{l'}-\varphi_l)},
 \qquad l\ne l', \quad 1\le l,l'\le n,
 \end{equation}
where $i=\sqrt{-1}\in\bbC$ is the imaginary unit and $c_\kappa\ge0$
is defined in (\ref {cs}) and (\ref {eq:cs0}).

The functions~(\ref {fl}) are $2\pi$-periodic in each argument,
moreover they are equivariant:
 $$
f_{ll'}(\varphi_1+t,\dots,\varphi_{n}+t)=f_{ll'}(\varphi_1,\dots,\varphi_{n}),
\qquad t\in\bbR,
 $$
 $$
f_{ll'}(\varphi_1+\fomega_1t,\dots,\varphi_{n}+\fomega_{n}t)=
 e^{i\fomega_lt}f_{ll'}(\varphi_1,\dots,\varphi_{n}), \qquad t\in\bbR.
 $$

\begin{scDef} \label {def:m:adm}
Let us fix the angular frequencies $\fomega_i$ of the planets
satisfying the properties (\ref {maxrez}), (\ref {parametry}), (\ref
{poryadki:chastot'}), (\ref {poryadki:chastot''}). A collection of planets'
masses $\mu(m_1,\dots,m_n)\in\bbR^n_{>0}$ will be called
{\it unclosing for a phase point
$\vvarphi=(\varphi_1,\dots,\varphi_n)\in(S^1)^n$}, or simply {\it
$\vvarphi$-unclosing}\footnote{\rm A collection of masses
$\mu(m_1,\dots,m_n)\in\bbR^n_{>0}$ is {\it $\vvarphi$-closing} if and
only if, for any index $l=1,\dots,n$, the planar polygonal line
$A_{l1}(\vvarphi)\ldots A_{ln}(\vvarphi)\subset\bbC$ is closed,
provided that the segments of this polygonal line have the form
$\overrightarrow{A_{ll'}(\vvarphi)A_{l,l'+1}(\vvarphi)}=m_{l'}f_{ll'}(\vvarphi)$
for $1\le l'\le l-1$ and the form
$\overrightarrow{A_{l,l'-1}(\vvarphi)A_{ll'}(\vvarphi)}=m_{l'}f_{ll'}(\vvarphi)$
for $l+1\le l'\le n$.}, if at least one of the following numbers does
not vanish:
 \begin{equation} \label{eq:fl}
 f_l(\vvarphi;m_1,\dots,m_n):=\sum_{\substack{l'=1\\l'\ne l}}^nm_{l'}f_{ll'}(\vvarphi)\in\bbC, \qquad 1\le l\le n,
 \end{equation}
see (\ref {cs})--(\ref {fl}). A phase point $\vvarphi\in(S^1)^n$ will be called
{\it symmetric} if it is fixed under the involution
$(S^1)^n\to(S^1)^n$, $\vvarphi\mapsto-\vvarphi$, of the
$n$-dimensional torus, i.e.\ its coordinates have the form
$\varphi_l\in\{0,\pi\}\mod2\pi$, $1\le l\le n$. Denote by $\cM$
(respectively $\cM^{\rm sym}$) the set of all collections of planets' masses
$\mu(m_1,\dots,m_n)\in\bbR^n_{>0}$ that are
$\vvarphi$-unclosing for any (respectively for any symmetric) phase
point $\vvarphi\in(S^1)^n$.
\end{scDef}

\begin{scRem} \label {rem:sym}
The number of symmetric phase points equals $2^n$. For any phase
point $\vvarphi\in(S^1)^n$, the set of $\vvarphi$-closing
collections of planets' masses is the intersection of a linear
subspace of $\bbR^n$ with $\bbR_{>0}^n$. Hence $\cM^{\rm sym}$ is
open in $\bbR^n_{>0}$. Moreover it is dense in $\bbR^n_{>0}$ whenever
it is nonempty. The subset $\cM\subset\bbR_{>0}^n$ is open in
$\bbR_{>0}^n$, since the torus $(S^1)^n$ is compact and the functions
$f_l=f_l(\vvarphi;m_1,\dots,m_n)$ are continuous, see (\ref {eq:fl}).
Suppose that $\kappa_{ii'}\in\bbZ$ for some $i\ne i'$, $1\le i,i'\le
n$. Then the set of $\vvarphi$-closing collections of planets' masses 
is contained in a plane of codimension $\ge2$ (since the
system of functions (\ref {eq:fl}) is linear in $(m_1,\dots,m_n)$ and
its rank is at least 2). Moreover any collection of masses with
$|\kappa_{ii'}|c_{\kappa_{ii'}}m_{i'}>\sum_{l\ne
i,i'}|\kappa_{il}|c_{\kappa_{il}}m_{l}$ is $\vvarphi$-unclosing for
any phase point $\vvarphi\in(S^1)^n$ (i.e.\ belongs to $\cM$). Hence
the open sets $\cM\subset\cM^{\rm sym}$ are nonempty, thus ``almost
any'' collection of planets' masses (see the paragraph before
theorem \ref {th:degen:sym}) belongs to $\cM^{\rm sym}$.
\end{scRem}

Consider the natural angular coordinates on the torus $\Lambda\o$:
 $$
 \pnt \mapsto \{\varphi_l,\ \varphi_{lj},\ 1\le l\le n,\ 1\le j\le n_l\}, \qquad \pnt\in\Lambda\o.
 $$
The following statement generalizes theorem \ref {th:degen:sym}.

\begin {Pro} \label {pro:degen:planets}
Consider the $N+1$ body problem of the type of planetary system with
\(or without\) satellites, $N\ge n\ge2$. Under the hypothesis of
theorem {\rm\ref {th1}}, fix the angular frequencies
$\fomega_1,\dots,\fomega_n$ of planets having the form {\rm(\ref
{maxrez}), (\ref {parametry}), (\ref {poryadki:chastot'}), (\ref
{poryadki:chastot''})}. Suppose that there exists at least one pair
of planets with indices $i\ne i'$, whose frequencies are in a special
resonance {\rm(\ref {degen:3bod'})}. In this case, one automatically
has $\alpha=0$, $\kappa_{ii'}\in\bbZ\setminus\{0,-1\}$ and
$c_{\kappa_{ii'}}>0$. Fix the two-dimensional torus
$\gamma\subset\Lambda\o$ corresponding to a $T$-periodic solution of
the model system. Let us suppose that the collection of planets' masses 
$\mu(m_1,\dots,m_n)\in\bbR_{>0}^n$ is $\vvarphi$-unclosing
for some \(and, hence, any\) point
$x=\{\varphi_l,\varphi_{lj}\}\in\gamma$, {\rm see (\ref {eq:fl})}.

Then, for any real number $D>0$, there exist numbers $\mu_0,\nu_0>0$
and a neighbourhood $U_0$ of the projection of the two-dimensional
torus $\gamma$ to the phase space of planets such that the following
property holds. For any values $\mu,\nu$ such that
$0<(\frac{\nu}{\nu_0})^3\le\mu\le\mu_0$ \(respectively
$0<\mu\le\mu_0$ if there are no satellites\), the direct product $U$
of the neighbourhood $U_0$ and the phase space of satellites does not
contain any $(\widetilde T,\widetilde\alpha)$-periodic trajectory of
the $N+1$ body problem under consideration, provided that the
parameters $\widetilde T$, $\widetilde\alpha$ have the form
 $$
|\widetilde T-T|+|\widetilde\alpha|\le D\mu.
 $$
In particular, the assertions of theorem {\rm\ref {th:degen:sym}}
hold.
 \end {Pro}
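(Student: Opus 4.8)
The plan is to deduce Proposition \ref{pro:degen:planets} from theorem \ref{th:nondeg}, applied to the $\varepsilon$-Hamiltonian system (\ref{eq:pert}) to which, by lemma \ref{lem3:1} and remark \ref{rem:unpert}, the $N+1$ body problem is equivalent. The entire content then reduces to choosing the correct function $R_0$ and to recognising the non-criticality hypothesis of theorem \ref{th:nondeg} as the $\vvarphi$-unclosing condition of definition \ref{def:m:adm}.

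First I would isolate the $O(\mu)$ part of the slow Hamiltonian. Writing $\widetilde H_0=H_0+\mu\widetilde R_0$ and using $1/\widetilde m_i-1/\bar m_i=\mu$ from (\ref{m}), formula (\ref{H0}) gives
$$
\widetilde R_0=\sum_{i=1}^n\frac{\xxi_i^2}{2}+\sum_{1\le i<i'\le n}K_{ii'}, \qquad R_0:=\lim_{\nu,\rho\to0}\widetilde R_0=\sum_{i=1}^n\frac{\xxi_i^2}{2}+\sum_{1\le i<i'\le n}\left(\langle\xxi_i,\xxi_{i'}\rangle-\frac{m_im_{i'}}{|\x_i-\x_{i'}|}\right),
$$
see (\ref{tilde:KS}). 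This $R_0$ involves only the planetary variables, hence ``projects'' to $M_0=T^*Q_0$ as theorem \ref{th:nondeg} demands, while $\widetilde H_1=H_1+\nu R_1$ ``projects'' to $M_1=T^*Q_1$, and $\|\widetilde R_0-R_0\|_{C^2}=O(\nu+\rho)$ near $\gamma$, so it can be made $\le\mu_0$.

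Next I would compute $\langle R_0\o\rangle$. The crucial observation is that $R_0\o$ is the restriction of $R_0$ to the common level $\cap_iH_{i0}^{-1}(H_{i0}(I_{i0}\o,0,0))$ of the planetary energies, and that by Kepler's third law all orbits on a fixed energy level share one period; hence the $T$-periodic solutions of the model system filling this level include the slightly eccentric ones, parametrised by the transversal coordinates $(q_{i0},p_{i0})$ of lemma \ref{lem3:2:}, so that averaging produces a genuine function of the phases $\varphi_i$ and of $(q_{i0},p_{i0})$ rather than a constant. On the circular surface $\{q=p=0\}$ the time average is phase independent (the relative angle of each pair sweeps an integer multiple of $2\pi$), so $\partial_{\varphi_i}\langle R_0\o\rangle$ vanishes there and the only non-trivial directions transversal to $\gamma$ within the energy level are the eccentricity directions $(q_{i0},p_{i0})$. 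Expanding the Newtonian and kinetic cross terms of $K_{ii'}$ in a Fourier series along the resonant torus and keeping the harmonic that is resonant with the single planet frequency $\omega_l$ (the one that co-rotates with the eccentricity vector $(q_{l0},p_{l0})$, which itself rotates with $\omega_l$ by corollary \ref{corlem3:2:}), I would show that the transversal gradient $\partial_{(q_{l0},p_{l0})}\langle R_0\o\rangle$ at a point $x=\{\varphi_l,\varphi_{lj}\}\in\gamma$ is a fixed nonzero multiple of the complex number $f_l(\vvarphi)$ of (\ref{eq:fl}); the scalar coefficients that arise are exactly the integrals $c_\kappa$ of (\ref{cs}), with $\kappa=\kappa_{ll'}$ and $r_\kappa$ the ratio of orbital radii (the extra $\tfrac1{2\kappa}(r_\kappa+3\cos t)$ in (\ref{cs}) being the kinetic contribution), and only the resonant pairs $\kappa_{ll'}\in\bbZ$ survive, in accordance with (\ref{eq:cs0}). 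Thus $x$ is a critical point of $\langle R_0\o\rangle$ iff $f_l(\vvarphi)=0$ for all $l$, i.e. iff $\mu(m_1,\dots,m_n)$ is $\vvarphi$-closing; the equivariance of the $f_{ll'}$ noted after (\ref{fl}) makes $|f_l|$ constant along $\gamma$, which is the ``some, and hence any'' clause. Hence under the $\vvarphi$-unclosing hypothesis no point of $\gamma$ is critical, and the hypotheses of theorem \ref{th:nondeg} hold.

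Finally I would translate the parameters. Applying theorem \ref{th:nondeg} with its constant taken to be $\max\{D,\sup_U\|\widetilde\Phi\|_{C^2}\}$ — finite because $\widetilde\Phi$ is analytic near $\Lambda\o$ by lemma \ref{lem3:1} — and using $\varepsilon=\omega^{1/3}\mu^{2/3}\nu$, the admissibility range $\omega\varepsilon/\mu_0\le\mu\le\mu_0$ becomes $\nu^3\le(\mu_0^3/\omega^4)\mu$, i.e. $(\nu/\nu_0)^3\le\mu\le\mu_0$ with $\nu_0:=\mu_0/\omega^{4/3}$ (a fixed positive number, since the $\omega_i$ and hence $\omega$ are fixed). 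This gives the stated conclusion; the case without satellites is the sub-case $H_1=\widetilde\Phi=0$, where $\varepsilon$ drops out and only $0<\mu\le\mu_0$ is needed. Theorem \ref{th:degen:sym}, and in particular corollary \ref{cor:3bodies}($\nexists$), then follow, since for $N\ge n=2$ remark \ref{rem:sym} gives $\cM^{\rm sym}=\cM=\bbR^2_{>0}$ and the region $U$ exhausts $\Lambda\o$. The main obstacle is the Fourier computation of the middle paragraph: carrying out the expansion of the Newtonian-plus-kinetic kernel along the mean-motion-resonant circular torus to obtain precisely the coefficients $c_\kappa$ of (\ref{cs}), verifying their positivity (so that the closing condition is the honest linear condition $f_l=0$), and checking that the transversal gradient is genuinely the vector $(f_l)_l$ with no spurious contributions from non-resonant pairs.
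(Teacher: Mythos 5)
Your proposal follows essentially the same route as the paper: reduce to theorem \ref{th:nondeg} via lemma \ref{lem3:1}, identify $R_0=\frac12\sum_i\xxi_i^2+\sum_{i<i'}K_{ii'}$ as the $O(\mu)$ part of the slow Hamiltonian, show that $d\langle R_0\o\rangle|_{\Lambda\o}$ has only $(dq_{l0},dp_{l0})$-components given (up to nonzero factors) by the numbers $f_l(\vvarphi)$, and convert $\omega\varepsilon/\mu_0\le\mu\le\mu_0$ into $(\nu/\nu_0)^3\le\mu\le\mu_0$ with $\nu_0=\mu_0/\omega^{4/3}$ --- all of which matches the paper, including the identification of the coefficients with the integrals $c_\kappa$ and the vanishing of non-resonant pairs. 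The one step you defer as ``the main obstacle'' is exactly what the paper's proof actually consists of: it computes $d(K_{12}|_\Theta)$ explicitly in the coordinates of lemma \ref{lem3:2:} (formula (\ref{vozm:syst})), integrates it along the linearized flow (\ref{sist}) to obtain (\ref{Husr}) for two planets, and then sums over pairs for general $n$; so your sketch is correct but leaves that computation as an assertion rather than a derivation.
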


\subsubsection {Система Солнце--две планеты} \label {subsec:3body:}

Let us show that theorem \ref {th:nondeg} (or the averaging method on a submanifold \cite[theorem 11.1]{9}, \cite[theorem 4]{29}) implies corollary \ref
{cor:3bodies}($\nexists$) on non-existence of periodic solutions of a planetary system with two planets ($N=n=2$). 
The Hamilton function and the symplectic structure of the perturbed problem are
 $$
 \somega\widetilde H_0=
 \somega\left(\widetilde K_1+\widetilde K_2+\mu K_{12}\right), \qquad
 \oomega_0=d\xxi_1\wedge d\x_1+d\xxi_2\wedge d\x_2.
 $$
Here $\widetilde K_i=\frac{\xxi_i^2}{2\widetilde m_i}-\frac{\bar
m_i}{|\x_i|}$ is the Hamiltonian function of the Kepler problem
corresponding to the $i$th planet, $\widetilde m_i=\frac{\bar
m_i}{1+\mu\bar m_i}$, $i=1,2$,
$K_{12}=\langle\xxi_1,\xxi_2\rangle-\frac{\bar m_1\bar
m_2}{|\x_1-\x_2|}$.

Suppose that the ratio of frequencies $\fomega_1$, $\fomega_2$ is
rational, i.e.\ they have the form
 $$
\fomega_1=\frac{2\pi k_1}{T}, \quad \fomega_2=\frac{2\pi k_2}{T}
 $$
where $k_1$, $k_2$ are nonzero integers, $k_1\ne\pm k_2$, $T>0$. Then
the solutions of the unperturbed problem ($\mu=0$) corresponding to
the independent circular motions of planets with angular frequencies
$\fomega_1$, $\fomega_2$ are $T$-periodic. Conversely, if the solution is
$T$-periodic then the pair of angular frequencies (\ref {chast}) is
proportional to a pair of integers, with coefficient $\frac{2\pi}{T}$.

\begin{proof}[of corollary \ref {cor:3bodies}($\nexists$)]
{\it Step 1.} Recall that the unperturbed problem (corresponding to
$\mu=0$) splits into two independent planar Kepler's problem. Hence
its $T$-periodic phase trajectories form the six-dimensional
submanifold
 $$
\Theta=\{K_1=\const,\ K_2=\const\}
 $$
in the eight-dimensional phase space. In fact, due to periodicity of
solutions of the Kepler problem with negative energy levels, the period
of any its closed trajectory is a smooth (and strictly monotone, see
\S\ref {par3:1:5}) function in the value of energy.

Let us find the averaged perturbation $\langle R_0\o\rangle$, i.e.\
the function obtained by averaging the perturbation function
$R_0\o=R_0|_\Theta=(K_{12}+\frac12\xxi^2)|_\Theta$ along the periodic
solutions of the unperturbed problem. In more detail, let us find the
differential of the function $\langle R_0\o\rangle$ at any point of
the torus $\Lambda\o\subset\Theta$.

{\it Step 2.} With respect to polar coordinates $\psi$, $r$ on the
plane of motion, we have
 $$
\widetilde K_i=\frac{p_{r_i}^2+p_{\psi_i}^2/r_i^2}{2\widetilde m_i}-
  \frac{\bar m_i}{r_i}, \qquad i=1,2,
 $$
 $$
K_{12}=
 \left(p_{r_1}p_{r_2}+\frac{p_{\psi_1}p_{\psi_2}}{r_1r_2}\right)\cos(\psi_1-\psi_2)+
 $$
 $$
+ \left(p_{r_1}\frac{p_{\psi_2}}{r_2}-\frac{p_{\psi_1}}{r_1}p_{r_2}\right)\sin(\psi_1-\psi_2)-
 \frac{\bar m_1\bar m_2}{\sqrt{r_1^2+r_2^2-2r_1r_2\cos(\psi_1-\psi_2)}}.
 $$

{\it Step 3.} Let us transfer to the coordinates
$\varphi_i,I_i,q_i,p_i$, $i=1,2$, from lemma~\ref {lem3:2:}. By this
lemma,
 $$
\Lambda\o=\{\,p_1=p_2=q_1=q_2=0,\ I_1=\const,\ I_2=\const\,\},
 $$
moreover the restriction of the linearized unperturbed system to the
tangent bundle $T_{\Lambda\o}\Theta=\{\xi\mid
dI_1(\xi)=dI_2(\xi)=0\}$ to $\Theta$ has the following form (with
respect to the coordinates $\varphi_i$, $d\varphi_i$, $dq_i$, $dp_i$,
$i=1,2$, on this bundle):
 \begin{equation} \label {sist}
 \frac{d\varphi_i}{dt}=\fomega_i, \quad
 \frac{d(d\varphi_i)}{dt}=0, \qquad
 \frac{d(dq_i)}{dt}=\fomega_i\frac{dp_i}{I_i}, \quad
 \frac1{I_i}\frac{d({dp_i})}{dt}=-\fomega_idq_i,
 \end{equation}
$i=1,2$. Hence, at each point $(\varphi_1,\varphi_2)$ of the torus
$\Lambda\o$, the differential of the function $K_{12}|_\Theta$ has
the following form:
 $$
 d (K_{12} |_\Theta)(\varphi_1,\varphi_2)=
 \frac{\bar m_1\bar m_2r_1r_2}{r_{12}^3}
 \left(
 \sin\varphi_{12}\left(d\varphi_{12}+2\frac{dp_1}{I_1}-2\frac{dp_2}{I_2}\right)+
 \right.
 $$
 $$
 + \left.\left(\frac{r_1}{r_2}-\cos\varphi_{12}\right)dq_1+
 \left(\frac{r_2}{r_1}-\cos\varphi_{12}\right)dq_2 \right) -
 $$
 \begin{equation} \label {vozm:syst}
 - \frac{I_1I_2}{r_1r_2}
 \left(
 \cos\varphi_{12}(dq_1+dq_2)+
 \sin\varphi_{12}\left(d\varphi_{12}+\frac{dp_1}{I_1}-\frac{dp_2}{I_2}\right)
 \right)
 \end{equation}
where $\varphi_{12}:=\varphi_1-\varphi_2$,
$r_{12}:=\sqrt{r_1^2+r_2^2-2r_1r_2\cos\varphi_{12}}$. The
perturbation function has the form $R_0\o=
R_0|_\Theta=(K_{12}+\frac12\xxi^2 )|_\Theta$. One easily shows that
the contribution of the summand
$\frac12\xxi^2|_\Theta=\frac{1}{2}\sum_{i=1}^2(p_{r_i}^2+p_{\psi_i}^2/r_i^2))|_\Theta$
to the averaged perturbation $\langle R_0\o\rangle$ has a trivial
differential at any point $(\varphi_1,\varphi_2)\in\Lambda\o$, i.e.\
$d\langle R_0\o\rangle(\varphi_1,\varphi_2)=d\langle
K_{12}|_\Theta\rangle(\varphi_1,\varphi_2)$.

{\it Step 4.} Consider the rational number
$\kappa=\kappa_{12}=\frac{k_1}{k_2-k_1}=\frac{\fomega_1}{\fomega_2-\fomega_1}$,
see (\ref {sii'}). If $\kappa\in\bbZ$ then define the number
$c_\kappa$ by the formula (\ref {cs}).

By integrating the values of the co-vector $d(K_{12}|_\Theta)$ (see
(\ref {vozm:syst})) on the solutions of the linearized system (\ref
{sist}), one obtains the following:

1) if $\kappa\in\bbQ\setminus\bbZ$ then the differential of the
function $\langle R_0\o\rangle$ vanishes at any point of the
two-dimensional torus $\Lambda\o$;

2) if $\kappa\in\bbZ$ then this differential has the following form
at any point $(\varphi_1,\varphi_2)\in\Lambda\o$:
 $$
d \langle R_0\o\rangle (\varphi_1,\varphi_2)=
 {\bar m_1\bar m_2}
 \left(
 \frac{\kappa c_\kappa}{r_1}
 \left( \cos(\kappa\varphi_{12})dq_1+\sin(\kappa\varphi_{12})\frac{dp_1}{I_1}
 \right) - \right.
 $$
 \begin{equation} \label {Husr}
 \left.
 - \frac{(\kappa+1)c_{-\kappa-1}}{r_2}
 \left( \cos((\kappa+1)\varphi_{12})dq_2-\sin((\kappa+1)\varphi_{12})\frac{dp_2}{I_2}
 \right) \right)
 \end{equation}
and, hence, it does not vanish (since $c_\kappa\ne0$, $c_{-\kappa-1}\ne0$, see~(\ref {cs})).

Since any point of the torus $\Lambda\o$ is noncritical for the
function $\langle R_0\o\rangle$, Theorem \ref {th:nondeg} (of the averaging method on a submanifold \cite[theorem 11.1]{9}, \cite[theorem 4]{29}) implies
Corollary \ref {cor:3bodies}($\nexists$).
 \qed
\end{proof}

\subsubsection {The general case of a planetary system with satellites} \label {subsec:any}

Let us derive Proposition \ref {pro:degen:planets} from Theorem \ref {th:nondeg}. 
By lemma \ref {lem3:1}, the summands $\widetilde H_0,\oomega_0$ in
the decom\-po\-si\-tions (\ref {vozmHM}) of the Hamiltonian function and
the symplectic structure of the $N+1$ body problem have the form
 $$
 \widetilde H_0=
 \sum_{i=1}^{n}\widetilde K_i+\mu\sum_{1\le i<i'\le n}K_{ii'}, \qquad
 \oomega_0=\sum_{i=1}^{n}d\xxi_i\wedge d\x_i.
 $$
Here $\widetilde K_i$ is the Hamiltonian function of the Kepler problem
of the $i$th planet that is similar to the Hamiltonian function
$\widetilde K_1$, and $K_{ii'}$ is the function similar to $K_{12}$.
Put $\kappa_{ii'}=\frac{\fomega_i}{\fomega_{i'}-\fomega_i}$, $1\le
i,i'\le n$, $i\ne i'$, see (\ref {sii'}).

From the equality (\ref {Husr}) in the case $N=n=2$, we immediately
obtain the analogous formula in general case $N\ge n\ge2$:
 $$
d \langle R_0\o\rangle|_{\Lambda\o}=d \langle R_0\o\rangle(\varphi_1,\dots,\varphi_{n})=
 \sum_{1\le i<i'\le n}\xi^*_{ii'}(\varphi_i,\varphi_{i'}).
 $$
Here $\xi^*_{ii'}(\varphi_i,\varphi_{i'})$ is the co-vector 
analogous to the co-vector (\ref {Husr}), which is denoted by
$\xi^*_{12}(\varphi_1,\varphi_2)$. In more detail, we have
 $$
 d \langle R_0\o\rangle|_{\Lambda\o}=
 \sum_{i=1}^{n}
 \frac{\bar m_i}{r_i}
 \sum_{\substack{i'=1\\i'\ne i}}^n
 \bar m_{i'}\kappa_{ii'} c_{\kappa_{ii'}}
 \left(
 \cos(\kappa_{ii'}\varphi_{ii'})dq_i-
 \sin(\kappa_{ii'}\varphi_{ii'})\frac{dp_i}{I_i}
 \right)
 $$
where $\varphi_{ii'}:=\varphi_i-\varphi_{i'}$. This co-vector
vanishes at those points of the torus $\Lambda\o$ where the functions
$f_l$, $1\le l\le n$, simultaneously vanish, see (\ref {eq:fl}).
Hence theorem \ref {th:nondeg} implies the absence of $(\widetilde
T,\widetilde\alpha)$-periodic solutions in some neighbourhood of the
torus $\gamma$, provided that the parameters
$\omega,\mu,\varepsilon>0$ are small enough and are related by the
inequalities $\omega\varepsilon/\mu_0\le\mu\le\mu_0$. Due to the
relation $\varepsilon=\omega^{1/3}\mu^{2/3}\nu$, these inequalities
have the form $\omega^{4/3}\mu^{2/3}\nu/\mu_0\le\mu\le\mu_0$, i.e.\
the form $\omega^{4}(\nu/\mu_0)^3\le\mu\le\mu_0$. Therefore theorem
\ref {th:nondeg} indeed implies proposition \ref {pro:degen:planets}.
 \qed

Theorem \ref {th:degen:sym} obviously follows from Proposition \ref
{pro:degen:planets}, Definition \ref {def:m:adm} of the subsets
$\cM\subset{\cM}^{\rm sym}\subset\bbR^n_{>0}$, and Remark \ref
{rem:sym}.

\end{fulltext}

\renewcommand {\refname}{References}

\end{document}